\newcommand{\ignore}[1]{}
\renewcommand{\Re}{\operatorname{Re}}
\renewcommand{\Im}{\operatorname{Im}}
\newcommand{\abs}[1]{\left\lvert {#1} \right\rvert}
\newcommand{\C}{{\mathbb{C}}}
\newcommand{\R}{{\mathbb{R}}}
\newcommand{\sL}{{\mathcal{L}}}
\newcommand{\sV}{{\mathcal{V}}}
\newcommand{\sW}{{\mathcal{W}}}
\newtheorem{thm}{Theorem}[section]
\newtheorem{prop}[thm]{Proposition}
\newtheorem{cor}[thm]{Corollary}
\newtheorem{lemma}[thm]{Lemma}
\theoremstyle{definition}
\newtheorem{example}[thm]{Example}
\theoremstyle{remark}
\author{Xianghong Gong}
\address{Department of Mathematics, University of Wisconsin,
Madison, WI 53706, USA}
\email{gong@math.wisc.edu}
\author{Ji\v{r}\'i Lebl}
\thanks{The second author was in part supported by NSF grant DMS 0900885 and
DMS 1362337.}
\address{Department of Mathematics, Oklahoma State University,
Stillwater, OK 74078, USA}
\email{lebl@math.okstate.edu}
\date{October 21, 2014}
\title{Normal forms for CR singular codimension two Levi-flat submanifolds}
\begin{document}


\begin{abstract}
Real-analytic Levi-flat codimension two CR singular submanifolds
are a natural generalization to $\C^m$, $m > 2$, of Bishop surfaces in $\C^2$.
Such submanifolds for example arise as zero sets of mixed-holomorphic
equations with one variable antiholomorphic.  We classify the codimension
two Levi-flat CR singular quadrics, and we notice that new
types of submanifolds arise in dimension 3 or greater.  In fact, the
nondegenerate submanifolds, i.e.\ higher order purturbations of
$z_m=\bar{z}_1z_2+\bar{z}_1^2$, have no analogue in dimension 2.
We prove that
the Levi-foliation extends through the singularity in the real-analytic
nondegenerate case.
Furthermore, we prove that the quadric is a
(convergent) normal form
for a natural large class of such submanifolds, and we compute its
automorphism group.  In general, we find a formal normal form in $\C^3$
in the nondegenerate case that shows  infinitely many formal
invariants.
\end{abstract}

\maketitle



\section{Introduction} \label{section:intro}

Let $M \subset \C^{n+1}$ be a real submanifold.
A fundamental question in CR geometry is to classify $M$ at a point
up to local biholomorphic transformations.  One approach is
to find a normal form for $M$.

A real-analytic hypersurface $M \subset \C^{n+1}$ is Levi-flat if
the Levi-form vanishes identically.
Roughly speaking, a Levi-flat submanifold is a family of complex submanifolds.
Intuitively, a Levi-flat submanifold is as close
to a complex submanifold as possible.
In the real-analytic smooth
hypersurface case, it is well-known that $M$ can locally be transformed into
the real hyperplane given by
\begin{equation}
\Im z_1 = 0 .
\end{equation}
We therefore focus on higher codimension case, in particular on codimension
2.  A codimension 2 submanifold is again given by a single equation, but in
this case a complex valued equation.  A new phenomenon that appears in
codimension 2 is that $M$ may no longer be a CR submanifold.
Let $T_p^cM \subset T_pM$ be the largest subspace with
$J T_p^c M = T_p^c M$,
where $J$ is the complex structure on $\C^{n+1}$.
A submanifold is CR if $\dim T_p^cM$ is constant.

Real submanifolds of dimension $n+1$ in $\C^{n+1}$ with a non-degenerate complex tangent point
has been studied extensively after the fundamental work of
E.~Bishop~\cite{Bishop65}.  In $\C^2$, Bishop studied the submanifolds
\begin{equation}
w = z\bar{z} + \gamma (z^2+\bar{z}^2) + O(3)
\end{equation}
where $\gamma \in [0,\infty]$ is called the Bishop invariant, with $\gamma
= \infty$ interpreted as $w = z^2+ \bar{z}^2 + O(3)$.
One of Bishop's motivations was to
study the hull of holomorphy of the real submanifolds by attaching analytic discs.
Bishop's work 
on the family of attached analytic discs has been refined by Kenig-Webster
\cites{KenigWebster:82, KenigWebster:84}, Huang-Krantz~\cite{HuangKrantz95}, and
Huang~\cite{Huang:jams}.   The normal form theory for real submanifolds for
Bishop surfaces or submanifolds was established by
Moser-Webster~\cite{MoserWebster83}; see
also Moser~\cite{Moser85}, Gong~\cites{Gong04, Gong94:duke, Gong94:helv}, 
Huang-Yin \cite{HuangYin09}, and Coffman\cite{Coffman:unfolding}.
We would like to mention that the Moser-Webster normal form does not deal with the case of vanishing Bishop invariant.

The formal normal form and its application to holomorphic classification for surfaces with vanishing
Bishop invariant was achieved by Huang-Yin~\cite{HuangYin09}  by a completely different method. 
Real submanifolds with complex tangents have been studied in other situations. 
See for example \cite{LMSSZ}, where CR singular submanifolds that are
images of CR manifolds were studied.
Normal forms for the quadratic part of
general codimension two CR singular submanifolds in $\C^3$ was completely solved by
Coffman~\cite{Coffman:fourfolds}.  Huang and Yin~\cite{HuangYin09:codim2}
studied the normal form for codimension two CR singular submanifolds
of the form $w=\abs{z}^2 + O(3)$.
Dolbeault-Tomassini-Zaitsev~\cites{DTZ, DTZ2}
and Huang-Yin~\cite{HuangYin:flattening}
studied CR singular submanifolds of codimension two 
that are boundaries of Levi-flat hypersurfaces.
Burcea~\cite{Burcea} constructed the formal normal form for codimension 2 CR singular
submanifolds approximating a sphere.
Coffman~\cite{Coffman:crosscap} found an algebraic normal form for
nondegenerate CR singular manifolds in high codimension and one dimensional
complex tangent.

To motivate our work,  we observe 
 that in Bishop's work, the real submanifolds are Levi-flat away from their CR singular sets.
 Our purpose is to understand such submanifolds in higher dimensional
case with codimension being exactly two. Notice that the latter
is the smallest codimension for CR singularity to be present in (smooth) submanifolds. 
Regarding  CR singular
Levi-flat real codimension 2 submanifolds
on $\C^{n+1}$ as a natural generalization of Bishop surfaces to $\C^{n+1}$, 
we wish  to find their normal forms.
For singular Levi-flat hypersurfaces and related work on foliations with
singularity, see
~\cites{Bedford:flat,BG:lf,CerveauLinsNeto,Lebl:lfsing,FernandezPerez:gensing,Brunella:lf}. 

Our techniques revolve around the study of the Levi-map
(the generalization of the
Levi-form to higher codimension submanifolds)
of codimension 2 submanifolds.
Extending the CR structure through the singular point via Nash blowup
and then extending the Levi-map to this blowup has been studied previously by
Garrity~\cite{Garrity:BU}.

A CR
submanifold is \emph{Levi-flat} if the Levi-map
vanishes identically.  Locally, all
CR real-analytic Levi-flat submanifolds of real codimension 2 can be, after
holomorphic change of coordinates, written as
\begin{equation}
\Im z_1 = 0, \qquad \Im z_2 = 0 .
\end{equation}
If a submanifold $M$
is CR singular, denote by $M_{CR}$ the set of points where $M$ is CR.
We say $M$ is Levi-flat if $M_{CR}$ is Levi-flat in the usual sense.
A Levi-flat CR singular submanifold 
has no local biholomorphic invariants at the CR points, just as in
the case of Bishop surfaces.

A real, real-analytic
codimension 2 submanifold
that is CR singular at the origin can be written in coordinates
$(z,w) \in \C^{n} \times \C = \C^{n+1}$ as
\begin{equation}
w = \rho(z,\bar{z})
\end{equation}
for $\rho$ that is $O(2)$.
We will be concerned with submanifolds where the quadratic
part in $\rho$ is nonzero in any holomorphic coordinates.  We say that such
submanifolds have
a \emph{nondegenerate complex tangent}.  For example, the Bishop surfaces in
$\C^2$ are precisely the CR singular submanifolds with nondegenerate complex
tangent.

First, let us classify the quadratic parts of CR singular Levi-flats,
and in the process completely classify the CR singular Levi-flat quadrics,
that is those where $\rho$ is a quadratic.

\begin{thm} \label{thm:quadratic}
Suppose that $M \subset \C^{n+1}$, $n \geq 2$, is a germ of a
real-analytic
real codimension 2
submanifold, CR singular at the origin, written
in coordinates $(z,w) \in \C^{n} \times \C$ as
\begin{equation} \label{eq:generalform}
w = A(z,\bar{z}) + B(\bar{z},\bar{z}) + O(3),
\end{equation}
for quadratic $A$ and $B$, where $A+B \not\equiv 0$ (nondegenerate complex
tangent).  Suppose that $M$ is Levi-flat (that is $M_{CR}$
is Levi-flat).
\begin{enumerate}[(i)]
\item
If $M$ is a quadric, then $M$
is locally biholomorphically equivalent to one and exactly one of the
following:
\begin{equation} \label{eq:quadnormalforms}
\begin{aligned}
\text{(A.1)} \quad & w = \bar{z}_1^2 , \\
\text{(A.2)} \quad & w = \bar{z}_1^2 + \bar{z}_2^2, \\
& \vdots \\
\text{(A.$n$)} \quad & w = \bar{z}_1^2 + \bar{z}_2^2 + \dots + \bar{z}_{n}^2  , \\[10pt]
\text{(B.$\gamma$)} \quad & w = \abs{z_1}^2 +  \gamma\bar{z}_1^2 , ~~ \gamma \geq 0, \\[10pt]
\text{(C.0)} \quad & w = \bar{z}_1z_2 , \\
\text{(C.1)} \quad & w = \bar{z}_1z_2 + \bar{z}_1^2 .
\end{aligned}
\end{equation}
\item \label{thmitem:quadpartlf}
If $M$ is real-analytic, then the quadric
$w = A(z,\bar{z}) + B(z,\bar{z})$
is Levi-flat, and can be put via a biholomorphic transformation into
exactly one of the forms \eqref{eq:quadnormalforms}.
\end{enumerate}
\end{thm}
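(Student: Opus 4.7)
My plan is to first reduce part (ii) to part (i) via a tangent-cone argument and then classify Levi-flat quadrics directly by linear algebra.

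\textbf{Reduction to quadrics.} I apply the anisotropic biholomorphism $\sigma_t(z, w) = (tz, t^2 w)$ for $t > 0$, which fixes the origin and carries $M$ to $\sigma_t(M) = \{w = t^{-2}\rho(tz, t\bar z)\}$; the defining equation converges smoothly to $w = A(z, \bar z) + B(\bar z, \bar z)$ as $t \to 0^+$. Each $\sigma_t(M)$ is biholomorphic to $M$, hence Levi-flat on its CR locus. Fixing a CR point $p$ of the limiting quadric $Q = \{w = A + B\}$ and nearby CR points $p_t \in \sigma_t(M)$ converging to $p$, the Levi forms at $p_t$ vanish for each $t > 0$ and converge to the Levi form of $Q$ at $p$, which therefore vanishes. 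Thus $Q$ is Levi-flat, and part (ii) follows from part (i).

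\textbf{Levi form of the quadric; case $A = 0$.} Write $A(z, \bar z) = \sum A_{ij} z_i \bar z_j$ and $B(\bar z, \bar z) = \sum B_{ij}\bar z_i\bar z_j$ with $B = B^t$. At a CR point $p$ of $Q$, $T^{1,0}_p Q$ is the complex hyperplane $H_p = \{a \in \C^n : \sum_j a_j\beta_j(p) = 0\}$, where $\beta_j(p) = \partial_{z_j}\bar\rho(p) = \sum_i \bar A_{ij}\bar z_i + 2\sum_k \bar B_{jk} z_k$, and the Levi form in the complex normal direction is $(a, a) \mapsto -\sum_{i,j} A_{ij} a_i\bar a_j$. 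Decomposing $A = A^{(1)} + iA^{(2)}$ into Hermitian parts, Levi-flatness requires both Hermitian forms to vanish on $H_p$ for every CR point $p$. The map $\beta$ is $\R$-linear from $\C^n_z$ to $\C^n$ with complex image equal to the $\C$-span of the columns of $2\bar B$ and of $\bar A^t$. A dimension count for totally isotropic subspaces of Hermitian forms shows that if this complex image has dimension $\ge 2$, the hyperplanes $H_p$ force $A^{(1)} = A^{(2)} = 0$, whence $A = 0$. Then $Q$ reduces to $w = B(\bar z, \bar z)$, and Takagi's canonical form for complex symmetric matrices (combined with a diagonal rescaling of the $z_i$) produces the normal forms (A.$k$) for $k = \mathrm{rank}(B) \in \{1, \dots, n\}$.

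\textbf{Case $A \ne 0$ and uniqueness.} If the complex image of $\beta$ has dimension at most one, both $\bar B$ and $\bar A^t$ have rank $\le 1$ with aligned (or zero) images. A linear coordinate change aligning this common image with $\C e_1$ forces $A(z, \bar z) = \ell(z)\bar z_1$ for some $\C$-linear form $\ell$ and $B(\bar z, \bar z) = c\bar z_1^2$ for some $c \in \C$. If the $z_1$-coefficient of $\ell$ is nonzero, a Bishop-style normalization (linear changes among $z_2, \dots, z_n$ and rescalings of $z_1$ and $w$) yields (B.$\gamma$), with $\gamma \ge 0$ the classical Bishop invariant. If the $z_1$-coefficient of $\ell$ vanishes while $\ell \ne 0$, the dominant mixed term is off-diagonal (say $z_2\bar z_1$ after permutation), and further normalization produces (C.0) or (C.1) depending on whether the residual $\bar z_1^2$ term can be absorbed. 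The main obstacle is this delicate normalization in the case $A \ne 0$, especially the split into (B.$\gamma$), (C.0), (C.1). Uniqueness --- that no two listed forms are biholomorphically equivalent --- follows from biholomorphic invariants of the quadric: the codimension and structure of its CR singular set, the behavior of the Levi foliation at this set, and, for (B.$\gamma$), the classical completeness of $\gamma$ as an invariant of nondegenerate Bishop surfaces.
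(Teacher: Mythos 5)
Your reduction of part (ii) to part (i) via the anisotropic dilations $\sigma_t(z,w)=(tz,t^2w)$ is correct, and it is a genuinely different (and cleaner) route than the paper's, which instead computes the $T^{(1,0)}$ vector fields of $M$ directly and observes that the lowest-weight terms of the Levi expression for $M$ are exactly the Levi expression of the quadric. The genuine gap is in part (i), at the crux of the whole classification: your claim that if the complex span of $\operatorname{im}\beta$ has dimension $\geq 2$ then ``a dimension count for totally isotropic subspaces'' forces $A=0$. The count does not suffice. Vanishing of $v^*Av$ on the hyperplane $H_p$ polarizes to vanishing of the full sesquilinear form on $H_p\times H_p$, but a Hermitian form of signature $(1,1)$ admits a one-real-parameter (circle) family of isotropic hyperplanes, and the image of the $\R$-linear map $\beta(z)=\bar{A}^t\bar{z}+2\bar{B}z$ can be real two-dimensional with complex span of dimension $2$ (a totally real plane). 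In that scenario the isotropy conditions only force $x^*Ax=0$ for $x$ in a totally real plane, which is satisfied by the nonzero matrix $A=\left[\begin{smallmatrix}0&c\\-c&0\end{smallmatrix}\right]$ with $\beta$-image $\R^2$; no dimension count excludes it. What excludes it is the coupling of $A$ and $B$ inside $\beta$ together with the symmetry of $B$: writing out the reality constraints on $\bar{A}^t\bar{z}+2\bar{B}z$ forces $B_{12}=-\bar{c}/2$ and $B_{21}=\bar{c}/2$, contradicting $B=B^t$ --- exactly the kind of explicit computation your sketch omits. The paper avoids this pencil analysis altogether: by Lemma~\ref{lemma:dflimit} (Fornaess, via Diederich--Fornaess) the limit of Levi leaves gives a complex $(n-1)$-dimensional variety in the quadric, the Segre variety argument shows it is a linear hyperplane on which $A(z,\bar z)+B(\bar z,\bar z)$ vanishes, normalizing it to $\{z_1=0\}$ confines $A,B$ to the first row and column, and Lemma~\ref{lemma:restriction} reduces to $n=2$, where identities such as $\beta^*A\beta\equiv 0$ kill the rank-two case ($a_{21}=0$) and pin down $B$.

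There is also a concrete error in your case $A\neq 0$: the dichotomy ``$z_1$-coefficient of $\ell$ nonzero $\Rightarrow$ B.$\gamma$'' is false. The quadric $w=(z_1+z_2)\bar{z}_1$ has $\ell=z_1+z_2$ with nonzero $z_1$-coefficient, yet the substitution $z_2'=z_1+z_2$ shows it is C.0, which is inequivalent to every B.$\gamma$: by Proposition~\ref{prop:astensors} the $A$-matrix transforms as $\lambda T^*AT$, so the Hermitian rank-one $A$ of B.$\gamma$ can never become nilpotent. The correct split is whether $\ell$ is a multiple of $z_1$ or involves some $z_j$, $j\geq 2$. Finally, your uniqueness invariants are too thin: the structure of the CR singular set does not separate B.0 from C.0 (both have complex $(2n-2)$-dimensional CR singular sets); the paper settles inequivalence through the tensorial invariance of the pair $(A,B)$ in Proposition~\ref{prop:astensors}, which you should invoke instead.
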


By part \eqref{thmitem:quadpartlf}, the quadratic part in
\eqref{eq:generalform} is an invariant
of $M$ at a point.  We say the \emph{type} of $M$ at the origin
is A.x, B.$\gamma$, or C.x depending on the type of the quadratic form.
Following Bishop, we call types B.$\gamma$ and A.1 Bishop-like,
we could think of $\gamma=\infty$ as A.1.

By type being \emph{stable} we mean that the type does
not change at all complex tangents in a neighborhood of the origin under any
small (or higher order)  perturbations that stay within the class of
Levi-flat CR singular submanifolds.
As a consequence of the above theorem and
because rank is lower semicontinuous, we get that the only types that
are stable are A.$n$ and C.1, although A.$n$ are
degenerate because the form $A(z,\bar{z})$ is identically zero.
See also Proposition~\ref{prop:instability}.

The quadrics A.$k$ for $k \geq 2$ do not possess a
nonsingular foliation extending the Levi-foliation of $M_{CR}$
through the origin.  In fact, there is a singular
complex subvariety of dimension 1 through the origin contained in $M$.
See \S~\ref{sec:foliation}.

In the sequel, when we wish to refer to the quadric of certain
type we will use the notation $M_{C.1}$ to denote the quadric
of type C.1.

The quadratic form $A(z,\bar{z})$ carries the ``Levi-map'' of the
submanifold.
Type C.1 is the unique quadric that is stable and has non-zero $A$.
Having non-zero $A$ is also stable in a neighborhood of the origin under
any small (or higher order) perturbations.
Therefore, we say a type is \emph{non-degenerate} if it is C.1
and we focus mostly on such  submanifolds. 
First, we show that submanifolds of type C.x possess
a nonsingular real-analytic foliation that extends the Levi-foliation, due
to the form $A(z,\bar{z})$:

\begin{thm} \label{thm:folextendsCxtype}
Suppose that $M \subset \C^{n+1}$, $n \geq 2$, is a real-analytic
Levi-flat CR singular submanifold of type C.1 or C.0,
that is, $M$ is given by
\begin{equation}
w = \bar{z}_1z_2 + \bar{z}_1^2 + O(3)
\qquad \text{or}
\qquad
w = \bar{z}_1z_2 + O(3).
\end{equation}
Then there exists a nonsingular real-analytic foliation defined
on $M$ that extends
the Levi-foliation on $M_{CR}$, and consequently, there exists
a CR real-analytic mapping $F \colon U \subset \R^2 \times \C^{n-1} \to
\C^{n+1}$ such that $F$ is a diffeomorphism onto $F(U) = M \cap U'$,
for some neighbourhood $U'$ of 0.
\end{thm}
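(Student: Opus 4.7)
The plan is to construct a CR real-analytic diffeomorphism
\[
F(s,t,\zeta_2,\ldots,\zeta_n) = \bigl(s+it,\, f_2(s,t,\zeta),\, \ldots,\, f_n(s,t,\zeta),\, g(s,t,\zeta)\bigr)
\]
onto a neighborhood of $0$ in $M$, with each $f_j$ and $g$ holomorphic in $\zeta = (\zeta_2,\ldots,\zeta_n)$ and real-analytic in $(s,t) \in \R^2$, normalized so that $f_j(s,t,0) = 0$ and $(\partial f_j/\partial \zeta_k)|_{(s,t,\zeta) = 0} = \delta_{jk}$. Holomorphicity in $\zeta$ will ensure that each slice $F(\{(s,t)\} \times \C^{n-1})$ is a complex $(n-1)$-dimensional submanifold of $\C^{n+1}$; since these slices lie in $M$ and give a nonsingular real-analytic foliation of $M$ near the origin, and since Frobenius integrability of the complex tangent distribution of $M_{CR}$ uniquely determines any complex submanifold of $M$ through a CR point tangent to $T^cM$, the slices must coincide with the Levi-leaves on $M_{CR}$. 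Both conclusions of the theorem then follow from the existence of $F$.

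Setting $\xi = s+it$ and substituting into the defining equation $w = \rho(z,\bar z)$ yields
\[
g(\xi,\bar\xi,\zeta) = \rho\bigl(\xi,\, f_2,\, \ldots,\, f_n,\, \bar\xi,\, \bar f_2,\, \ldots,\, \bar f_n\bigr).
\]
Because the left side is holomorphic in $\zeta$ while the right side contains the antiholomorphic-in-$\zeta$ quantities $\bar f_j$, the coefficient of every $\bar\zeta^\beta$ with $\beta \neq 0$ on the right side must vanish. Expanding $f_j = \sum_{|\alpha|\geq 1} a_{j,\alpha}(\xi,\bar\xi)\zeta^\alpha$ and matching Taylor coefficients produces a recursive system for the $a_{j,\alpha}$. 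Thanks to the dominant quadratic piece $\bar z_1 z_2$ of $\rho$, the linear operator on the new unknowns at each order is essentially multiplication by $\bar\xi$ (with a further regularizing contribution from the $\bar z_1^2$ term in type C.1).

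The principal obstacle will be the vanishing of $\bar\xi$ along the locus $\{\xi = 0\}$: for the recursion to yield real-analytic coefficients $a_{j,\alpha}$, the inhomogeneous terms at each stage must themselves vanish on $\{\xi = 0\}$ to matching order. I expect this divisibility to follow from the Levi-flat condition on $\rho$, which when unfolded order-by-order in $\zeta$ against the ansatz forces exactly the cancellations needed. The bulk of the work in a complete proof should lie here, combining the nondegenerate shape of the quadratic part with the Levi-flat PDE pulled back to $M_{CR}$, and handling the C.0 and C.1 cases in parallel. Once the formal series is secured, convergence in a neighborhood of the origin follows by a standard majorant estimate from real-analyticity of $\rho$, and the inverse function theorem gives the diffeomorphism property of $F$.
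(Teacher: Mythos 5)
Your ansatz contains a fatal rigidity: you fix the first component of $F$ to be exactly $s+it$, so that every slice $F(\{(s,t)\}\times\C^{n-1})$ lies in a level set $\{z_1=\text{const}\}$. If such an $F$ existed, then inverting $\zeta\mapsto(f_2,\ldots,f_n)$ holomorphically (possible by your normalization $\partial f_j/\partial\zeta_k=\delta_{jk}$ at $0$) would exhibit $M$ as a graph $w=R(z_1,\bar z_1,z_2,\ldots,z_n)$ with $R$ holomorphic in $z_2,\ldots,z_n$, i.e.\ a mixed-holomorphic-type submanifold as in Proposition~\ref{prop:imagelf}. But by Theorem~\ref{thm:themixedholform}, every type C.1 submanifold of that form is biholomorphically equivalent to the quadric $M_{C.1}$, whereas Theorem~\ref{thm:formalnormformC3} shows that a generic Levi-flat type C.1 submanifold is not even \emph{formally} equivalent to $M_{C.1}$ (the invariant $A$ in the normal form is precisely the obstruction to making the first component of the parametrization independent of $\xi$). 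Concretely, for a biholomorphic image of the quadric such as the image of $M_{C.1}$ under $(z_1,z_2,w)\mapsto(z_1+w^2,z_2,w)$, the Levi leaves are visibly not contained in level sets of the first coordinate, and since a complex $(n-1)$-submanifold of $M$ through a CR point is uniquely determined, no $F$ of your form can cover such an $M$. So the divisibility by $\bar\xi$ that you ``expect to follow from the Levi-flat condition'' genuinely fails: the Levi-flat PDE does not supply the needed cancellations, and the residual obstruction is exactly the infinite string of formal invariants in $A$. This is not a gap that more careful bookkeeping in the recursion can close; the ansatz itself must be abandoned (at minimum, $f_1$ must be allowed to depend on $\zeta$, at which point the graph structure over $(s,t,\zeta)$ that drives your recursion is lost).

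The paper's proof avoids parametrizing $M$ directly. It works with the Levi-map: writing $\varphi(z,v)=v^*L(z,\bar z)v$ where $L$ has dominant term $v_1\bar v_2$ coming from the quadratic part $\bar z_1 z_2$, it shows the bundle $\pi(T^{(1,0)}M_{CR})$ extends real-analytically across the CR singularity by solving $\varphi=0$ uniquely in a cone ($v_1=f(z,v_3,\ldots,v_n)$, $v_2=1$), producing a real-analytic rank-$(2n-2)$ distribution on all of $M$ that is involutive because it is involutive on the dense set $M_{CR}$. The extended foliation then comes from the real Frobenius theorem, and the CR parametrization $F$ is obtained abstractly via Lemma~\ref{lemma:folisabs}, using embeddability of real-analytic Levi-flat abstract CR structures---crucially, the resulting $F$ has a fully general first component depending on $\zeta$, which is exactly the freedom your construction forbids.
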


Here the CR structure on $\R^2\times\C^{n-1}$ is
induced from $\C^2\times\C^{n-1}$.
As a corollary of this theorem we obtain in \S~\ref{sec:crsingc1}
using the results of \cite{LMSSZ} that the
CR singular set of any type C.1 submanifold is
a Levi-flat submanifold of dimension $2n-2$ and CR dimension $n-2$.

The Levi-foliation on a type C.x
submanifold cannot extend to a whole neighbourhood of $M$ as a nonsingular
holomorphic foliation.  If it did, we could flatten the foliation and
$M$ would be a Cartesian product, in particular Bishop-like.
Thus, the study of normal form theory for the special case when the
foliation extends to a neighbourhood is reduced to
the case of Bishop surfaces, which have been studied extensively.
 
A codimension $2$ submanifold in $\mathbb{C}^m$ can arise from
\begin{equation} \label{eq:mixhol}
f(\bar{z}',z'')=0
\end{equation}
for a suitable holomorphic function $f$ in $m$ variables.
The zero set admits two holomorphic foliations. We are interested in
the case where one of foliations has leaves of maximum dimension $m-2$, while the other
has leaves of minimum dimension $0$. Therefore, we will assume that $z'=z_1$
and $z''=(z_2,\ldots, z_m)$.
Functions holomorphic in some variables and anti-holomorphic in other
variables, such as \eqref{eq:mixhol}, are often called \emph{mixed-holomorphic} or mixed-analytic,
and come up often in complex geometry, the simplest example being the
standard inner product.
An interesting feature of the mixed-holomorphic
setting is that the equation can be
complexified into $\C^m$, so the sets share some of the properties of
complex varieties.  However, they have a different automorphism group if we
wish to classify them under biholomorphic transformations.  Such
mixed-analytic sets are automatically real codimension 2, are
Levi-flat or complex, and may have CR singularities.  We study their
normal form in \S~\ref{sec:onebar}.  See also
Theorem~\ref{thm:themixedholform} below.

When a type C.1 CR singular submanifold has a defining equation
that does not depend on $\bar{z}_2, \ldots, \bar{z}_n$ we prove that
it is automatically Levi-flat, and it is equivalent to $M_{C.1}$.

\begin{thm} \label{thm:themixedholform}
Let $M \subset \C^{n+1}$, $n \geq 2$, be a real-analytic submanifold given by
\begin{equation} \label{eq:C1mixedhol}
w = \bar{z}_1 z_2 + \bar{z}_1^2 + r(z_1,\bar{z}_1,z_2,z_3,\ldots,z_n) ,
\end{equation}
where $r$ is $O(3)$.  Then $M$ is Levi-flat and
at the origin $M$ is locally biholomorphically equivalent to the quadric
$M_{C.1}$ submanifold
\begin{equation} \label{eq:C1mixedholnormform}
w = \bar{z}_1z_2 + \bar{z}_1^2 .
\end{equation}
\end{thm}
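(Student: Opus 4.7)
The plan has two steps: prove Levi-flatness directly from the mixed-holomorphic structure, then construct an explicit biholomorphism to $M_{C.1}$ via Weierstrass preparation. For Levi-flatness, the key observation is that $\bar z_1$ is the only antiholomorphic variable appearing in the defining equation of $M$. For each fixed $c \in \C$ near $0$, the slice $L_c := M \cap \{z_1 = \bar c\}$ has $\bar z_1 = c$ pinned, so the defining equation becomes the graph of a holomorphic function of $(z_2,\dots,z_n)$:
\[
L_c = \bigl\{(\bar c, z_2, \dots, z_n, c z_2 + c^2 + r(\bar c, c, z_2, \dots, z_n)) : (z_2, \dots, z_n) \in \C^{n-1}\bigr\},
\]
a complex $(n-1)$-dimensional submanifold of $\C^{n+1}$. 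As $c$ ranges over a neighborhood of $0$, the family $\{L_c\}$ is a real-analytic foliation of $M$ by complex submanifolds, so $M$ is Levi-flat (at each CR point the leaf through the point is tangent to $T^c M$).

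For the normal form, I would treat the antiholomorphic variable $\bar z_1$ as an independent complex variable $t$ and consider the holomorphic function of $n+2$ variables
\[
P(z, w, t) := t^2 + z_2\, t - w + r(z_1, t, z_2, \dots, z_n),
\]
so that $M = \{P(z, w, \bar z_1) = 0\}$. Since $r = O(3)$, one has $P(0, 0, t) = t^2 + O(t^3)$, so $P$ is $t$-regular of order two at the origin, and the Weierstrass preparation theorem produces a holomorphic unit $u(z, w, t)$ and holomorphic functions $a(z, w), b(z, w)$ with $a(0) = b(0) = 0$ such that
\[
P(z, w, t) = u(z, w, t)\,\bigl(t^2 + a(z, w)\, t + b(z, w)\bigr).
\]
Evaluating this identity at $t = 0$ and differentiating once in $t$ at $t = 0$, using $r = O(3)$, yields the first-order Taylor coefficients $\partial a/\partial z_2|_0 = 1$, $\partial a/\partial z_j|_0 = 0$ for $j \neq 2$, $\partial b/\partial w|_0 = -1$, and $\partial b/\partial z_j|_0 = 0$. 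Consequently the holomorphic map
\[
\Phi(z_1, \dots, z_n, w) := \bigl(z_1,\ a(z, w),\ z_3, \dots, z_n,\ -b(z, w)\bigr)
\]
has invertible differential at $0$ and is a biholomorphism near $0$. Since $u$ is a unit, $\{P = 0\} = \{Q = 0\}$ near the origin, so in the new coordinates $(z', w') = \Phi(z, w)$ (with $z_1' = z_1$, hence $\bar z_1' = \bar z_1$), the image $\Phi(M)$ is cut out by $\bar z_1'^2 + z_2' \bar z_1' - w' = 0$, i.e., $w' = \bar z_1' z_2' + \bar z_1'^2$, which is precisely $M_{C.1}$.

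The main obstacle is spotting that the single-antiholomorphic-variable hypothesis permits passage from a mixed-holomorphic equation to a purely holomorphic one in $n+2$ variables, thereby reducing the normal form problem to Weierstrass preparation; once this insight is in hand, the verification of the relevant first-order Taylor coefficients of $a$ and $b$ and the identification of $\Phi(M)$ with $M_{C.1}$ are routine.
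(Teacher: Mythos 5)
Your proposal is correct, and for the normalization step it takes a genuinely different route from the paper. Your Levi-flatness argument --- foliating $M$ by the complex leaves $M \cap \{z_1 = \bar{c}\}$ --- is essentially the paper's Proposition~\ref{prop:imagelf}, which exhibits $M$ as the image of $\R^2 \times \C^{n-1}$ under a real-analytic CR diffeomorphism, so that part is the same. For the equivalence with $M_{C.1}$, however, the paper proceeds in two stages: a formal normal form (Lemma~\ref{lemma:formalpartofnormform}) that lowers the top power of $\bar{z}_1$ degree by degree using transformations in $(z_2,w)$ only, followed by a separate convergence proof (Lemma~\ref{lemma:convergence}) exploiting the involution $\bar{z}_1 \mapsto -z_2 - \bar{z}_1$ of $M_{C.1}$, the homogeneity of $\bar{z}_1 z_2 + \bar{z}_1^2$, and the convergence criterion of Lemma~\ref{lemma:convlemma}. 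You collapse both stages into a single application of the Weierstrass preparation theorem after complexifying $\bar{z}_1$ into an independent variable $t$, which yields a convergent transformation immediately; your Taylor computations check out, with one small caveat worth recording: differentiating $P = uQ$ in $t$ at $t=0$ gives $a = z_2 + u_t(0,0,0)\,w + O(2)$, so $\partial a/\partial w|_0$ need not vanish --- but since $b = -w + O(2)$ and $\partial b/\partial z_j|_0 = 0$, the Jacobian of $\Phi$ at $0$ is unipotent and invertibility holds exactly as you assert. The two approaches are secretly related: the two roots of your Weierstrass polynomial $t^2 + a(z,w)t + b(z,w)$ are the branches $\bar{z}_1$ and $-z_2-\bar{z}_1$ interchanged by the paper's involution, and $a$ and $b$ are the elementary symmetric functions of these roots --- precisely the generators $z_2$, $\bar{z}_1 z_2 + \bar{z}_1^2$ of the ring of invariants that the paper extracts via Noether's theorem in Lemma~\ref{lemma:F1determF2G}. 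What the paper's longer route buys is reusable machinery: the involution and the invariant/skew-invariant decomposition also drive the automorphism group computation (Theorem~\ref{thm:autogroup}), the polynomial mapping statement in Theorem~\ref{thm:flatC1}, and the formal normal form of \S~\ref{sec:generalC1}, and the formal lemma applies verbatim to formal $r$; your argument is shorter and self-contained for this theorem (and also adapts formally, via formal Weierstrass division), but it is special to the mixed-holomorphic hypothesis that no $\bar{z}_2,\ldots,\bar{z}_n$ appear, which is what makes $P$ holomorphic in $n+2$ variables.
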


The theorem is also true formally; given a formal submanifold
of the form \eqref{eq:C1mixedhol}, it is formally equivalent to $M_{C.1}$.

A key idea in the proof of the convergence of the normalizing transformation
is that the form $B(\bar{z},\bar{z}) = \bar{z}_1^2$ induces
a natural mixed-holomorphic involution on quadric $M_{C.1}$.
This involution also plays a key role in computing the automorphism group
of the quadric in Theorem~\ref{thm:autogroup}.

%

Finally, we also compute the automorphism group for the quadric $M_{C.1}$, see
Theorem~\ref{thm:autogroup}.
In particular we show that
the automorphism
group is infinite dimensional.


Not every type C.1 Levi-flat submanifold is
biholomorphically equivalent to the C.1 quadric.
We will find a formal normal form for type C.1 Levi-flat submanifolds in
$\C^3$ that shows infinitely many formal invariants.  Let us give a
simplified statement.  For details see
Theorem~\ref{thm:formalnormformC3}.

\begin{thm}
Let $M$ be a
real-analytic Levi-flat type C.1 submanifold in $\C^{3}$. There exists a formal biholomorphic map transforming $M$ into 
 the image of
\begin{equation}
\hat\varphi(z,\bar{z},\xi)=\bigl(z+A(z,\xi, w)w\eta, \xi,w\bigr)
\end{equation}
with  $\eta=\bar z+\frac{1}{2}{\xi}$ and $w=\bar z\xi+\bar z^2$. Here $A=0$,
or $A$ satisfies certain normalizing conditions.

When $A \not= 0$ the formal automorphism group preserving the normal form is
finite or $1$ dimensional. 
\end{thm}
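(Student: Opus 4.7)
My plan is to build a normal form by starting from a concrete CR real-analytic parametrization of $M$ and successively normalizing its components. First, by Theorem~\ref{thm:folextendsCxtype} applied with $n=2$, $M$ admits a CR real-analytic parametrization from an open subset of $\R^2 \times \C$, whose formal power series expansion at the origin gives a formal map $\varphi(z,\bar z,\xi) = (\varphi_1,\varphi_2,\varphi_3)$ holomorphic in $\xi$. After an initial biholomorphism of $\C^3$ and a linear change of parameters, this has leading part $(z,\xi,\bar z\xi + \bar z^2)$, matching $M_{C.1}$. A reparametrization of $\xi$ makes $\varphi_2 \equiv \xi$, and a reparametrization of $\bar z$ makes $\varphi_3 \equiv \bar z\xi + \bar z^2$; the latter amounts to formally solving $(\bar z')^2 + \xi \bar z' - \varphi_3 = 0$ for $\bar z'$, which succeeds because the relevant discriminant is a unit at the origin. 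After these reductions $\varphi_1 = z + u(z,\bar z,\xi)$ with $u = O(2)$.

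The central tool for the next step is the involution $\iota\colon(z,\bar z,\xi)\mapsto(z,-\xi-\bar z,\xi)$ of the parameter space, which interchanges the two roots of $\zeta^2 + \xi\zeta - w = 0$ and hence preserves $w$ while sending $\eta = \bar z + \tfrac12 \xi$ to $-\eta$. Every formal $u$ decomposes uniquely as $u_+(z,\xi,w) + u_-(z,\xi,w)\eta$ with $u_\pm$ functions of the $\iota$-invariants $(z,\xi,w)$. I would then apply the remaining formal biholomorphisms of $\C^3$ that stabilize the already-normalized forms of $\varphi_2, \varphi_3$ (these act purely on the $z_1$-coordinate) together with parameter-space reparametrizations of $z$ alone, to kill $u_+$ entirely and to reduce $u_-$ modulo the image of an explicit cohomological operator. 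Terms in $u_-$ not carrying a factor of $w$ are absorbable by the allowed change of the $z$-variable, leaving exactly the normal form $u = A(z,\xi,w)\, w\eta$ with $A$ subject to the explicit conditions stated in the full version of the theorem.

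For the final statement on the residual automorphism group, I would consider a formal biholomorphism $\Psi$ of $\C^3$ together with a compatible reparametrization $G$ of $(z,\bar z,\xi)$, and analyze the intertwining equation $\Psi\circ\hat\varphi = \hat\varphi\circ G$. The candidates for $\Psi$ lie in the stabilizer of $M_{C.1}$ computed in Theorem~\ref{thm:autogroup}; imposing in addition that $A$ is preserved gives a sequence of equations on the coefficients of $A$. A term-by-term comparison shows that for $A \not= 0$ these equations cut the infinite-dimensional stabilizer of the quadric down to a finite group, or to a one-parameter family in the exceptional case of an $A$ having a special invariance under a continuous subgroup of the quadric's automorphism group.

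The principal obstacle is the second paragraph: identifying the exact transverse slice on which $A$ must lie, that is, the complementary subspace inside the space of $\iota$-anti-invariant formal series to the image of the cohomological operator governing the action of the allowed transformations. This is analogous to the Moser--Webster cohomological computation for Bishop surfaces, but the resonance structure is dictated by the type C.1 symmetries rather than by the Bishop symmetries, so the precise form of the solvable terms must be determined by a careful degree-by-degree analysis, making crucial use of the factorization $w = \bar z(\xi + \bar z)$ and the anti-invariance of $\eta$.
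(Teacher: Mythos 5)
Your overall strategy is the same as the paper's (parametrize $M$ via Theorem~\ref{thm:folextendsCxtype}, decompose under the involution $\sigma(z,\bar z,\xi)=(z,-\bar z-\xi,\xi)$ into invariant/skew-invariant parts, absorb the non-$w$-divisible skew terms by reparametrizing $z$, and reduce the rest modulo a cohomological operator), but two steps are genuinely broken. First, your preliminary reduction of $\varphi_3$ to $\bar z\xi+\bar z^2$ by ``reparametrizing $\bar z$'' is not a legal move: a CR diffeomorphism of $\R^2\times\C$ must have real-valued first two components, so the new $z$ is a function $G_1(z,\bar z)$ independent of $\xi$ and the new $\bar z$ is forced to be its conjugate $\bar G_1(\bar z,z)$ --- you cannot move $\bar z$ independently of $z$, and the solution of $(\bar z')^2+\xi\bar z'=\varphi_3$ inevitably depends on $\xi$. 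Worse, your claim that ``the relevant discriminant is a unit at the origin'' is false: the discriminant is $\xi^2+4\varphi_3=(\xi+2\bar z)^2+O(3)$, which vanishes at the origin (its zero locus is exactly the branch locus of the involution, reflecting the CR singularity), so even a formal square root need not exist. The paper instead kills $r$ using the second and third components $f_2,f_3$ of the \emph{target} biholomorphism, coupled to $g_1$, in the preliminary normalization; relatedly, your parenthetical that the residual transformations ``act purely on the $z_1$-coordinate'' is incorrect --- any nontrivial reparametrization $g_1$ of $z$ destroys the normalization $\varphi_3=w$ and must be compensated by nontrivial $f_2$ and $f_3$, and tracking this interplay is precisely what produces the operator below.

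Second, the heart of the theorem is exactly what you defer: one must compute the operator (the paper's $Lb_N$) giving the action of the only remaining freedom, the holomorphic coefficients $b_N z^N$ of $f_1$ together with the induced $g_1,f_2,f_3$, on the skew part $[a^-]_{N+s-1}$, and prove it is injective for $b_N\neq 0$. The paper does this by an explicit coefficient computation: with $\lambda_N=\lambda_N'=(-1)^N$ in the expansions of $\bar z^N+(-\bar z-\xi)^N$ and $\sum_i\bar z^i(-\bar z-\xi)^{N-1-i}$, one finds $(Lb_N)_{i_*(j_*+N-1)k_*}=(a^-)_{i_*j_*k_*}\bar b_N\,(-1)^N(1+j_*+k_*)\neq 0$, which both yields the concrete normalizing conditions $A_{i_*(j_*+n)k_*}=0$, $n\geq 1$, and the uniqueness of the normalizing map among those tangent to the identity. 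Without this, your third paragraph cannot get off the ground either: the conclusion that the automorphism group of the normal form is finite or $1$-dimensional is not obtained by comparing with Theorem~\ref{thm:autogroup} directly, but by using the uniqueness of the normalization to force any self-equivalence to be a dilation $(z,\xi,w)\mapsto(\nu z,\bar\nu\xi,\bar\nu^2 w)$ with $\bar\nu^{3}A(\nu z,\bar\nu\xi,\bar\nu^2 w)=A$, whence $\abs{\nu}=1$ when $A\not\equiv 0$ and the group is a closed subgroup of the circle. As written, your proposal establishes neither the existence of the transverse slice nor the rigidity statement.
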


We do not know if the formal normal form above can be achieved by convergent
transformations, even if $A=0$.


\section{Invariants of codimension 2 CR singular submanifolds}

Before we impose the Levi-flat condition, let us find some invariants
of codimension two CR singular submanifolds in $\C^{n+1}$ with
CR singularity at 0.
Such a
submanifold can locally near the origin be put into the form
\begin{equation} \label{theeq}
w = A(z,\bar{z}) + B(\bar{z},\bar{z}) + O(3),
\end{equation}
where $(z,w) \in \C^{n} \times \C$ and $A$ and $B$ are quadratic forms.  
We think of $A$ and $B$ as matrices and $z$ as a column vector and
write the forms as
$z^*Az$ and $z^* B\bar{z}$ respectively.
The matrix $B$ is not unique.
Hence we make $B$ symmetric to make
the choice of the matrix $B$ canonical.
The following proposition is not difficult and
well-known.  Since the details are important and will be used later,
let us prove this fact.

\begin{prop} \label{prop:astensors}
A biholomorphic transformation of \eqref{theeq}
taking the origin to itself and
preserving the form of \eqref{theeq} takes the matrices
$(A,B)$ to
\begin{equation}
(\lambda T^* A T, \lambda T^* B \overline{T} ) ,
\end{equation}
for $T \in GL_n(\C)$ and $\lambda \in \C^*$.
If $(F_1,\ldots,F_n,G) = (F,G)$ is the transformation
then the linear part of $G$ is $\lambda^{-1} w$ and the linear part
of $F$ restricted to $z$ is $Tz$.
\end{prop}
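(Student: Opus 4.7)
My plan is to pin down the transformation law by writing the biholomorphism $\Phi = (F, G)$ in terms of its Taylor expansion and matching coefficients of the defining equation up to order two. Viewing $\Phi$ as the change of coordinates $(z,w) = \Phi(z', w')$, I impose that the submanifold $M$ cut out in the old coordinates by $w = \rho(z, \bar z) = z^* A z + z^* B \bar z + O(3)$ is cut out in the new coordinates by $w' = \rho'(z', \bar z') = (z')^* A' z' + (z')^* B' \bar z' + O(3)$. Substituting $w' = \rho'(z',\bar z')$ turns this into the functional identity
\[
G\bigl(z', \rho'(z', \bar z')\bigr) = \rho\bigl(F(z', \rho'(z', \bar z')),\, \overline{F(z', \rho'(z', \bar z'))}\bigr),
\]
to be expanded order by order in $(z', \bar z')$.

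Writing $F(z,w) = Tz + cw + O(2)$ and $G(z,w) = L(z) + \beta w + O(2)$ with $T$ an $n\times n$ complex matrix, $c \in \C^n$, $L$ a linear functional on $\C^n$, and $\beta \in \C$, the fact that $\rho' = O(2)$ makes both sides of the identity $O(2)$, so the only potentially linear contribution $L(z')$ on the left must vanish and $L \equiv 0$. The Jacobian of $\Phi$ at $0$ is then block triangular with diagonal blocks $T$ and $\beta$, and invertibility of $\Phi$ forces $T \in GL_n(\C)$ and $\beta \in \C^*$; setting $\lambda := \beta^{-1}$ recovers the stated claim about the linear parts. Next I write $G(z, w) = \beta w + Q(z, w) + O(3)$ for a homogeneous quadratic $Q$; since $\rho' = O(2)$, only the pure-$z$ part of $Q$ contributes at second order, giving $G(z',\rho') = \beta \rho' + Q(z', 0) + O(3)$, while $F(z', \rho') = Tz' + O(2)$ yields
\[
\rho(F, \bar F) = (z')^* (T^* A T) z' + (z')^* (T^* B \bar T) \bar z' + O(3).
\]

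Separating the three monomial types $z'_i z'_j$, $\bar z'_i z'_j$, and $\bar z'_i \bar z'_j$ in the identity at order two, the pure-holomorphic terms give $Q(z', 0) = 0$ (the right-hand side has no such monomials), the mixed terms give $\beta A' = T^* A T$, and the pure-antiholomorphic terms, using the prescribed symmetry of both $B$ and $B'$, give $\beta B' = T^* B \bar T$. Rearranging produces $A' = \lambda T^* A T$ and $B' = \lambda T^* B \bar T$, the stated formulas. The argument is essentially routine Taylor bookkeeping; the only subtleties are keeping straight the direction of the change of variables, which governs whether $T$ or $T^{-1}$ appears in the final law, and recognizing that $Q(z', 0) = 0$ is a genuine constraint on $G$ --- namely, the vanishing of its pure-holomorphic quadratic $z$-part --- enforced by the absence of such monomials in the normal form, rather than a definitional choice.
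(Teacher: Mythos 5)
Your proof is correct and takes essentially the same approach as the paper's: substitute the transformation into the defining equation, match linear terms to conclude $L\equiv 0$ (so the linear part of $G$ is $\lambda^{-1}w$ and $T$ is invertible by block-triangularity of the Jacobian), then match quadratic terms to obtain $(\lambda T^* A T, \lambda T^* B \overline{T})$. Your explicit remark that the pure-holomorphic quadratic part $Q(z',0)$ must vanish is left implicit in the paper's displayed quadratic identity, but it is the same coefficient bookkeeping.
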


Let us emphasize that $A$ is an arbitrary complex matrix and
$B$ is a symmetric, but not necessarily Hermitian, matrix.

\begin{proof}
Let $(F_1,\ldots,F_n,G) = (F,G)$ be a change of coordinates taking
\begin{equation}
w = \widetilde{A}(z,\bar{z}) + \widetilde{B}(\bar{z},\bar{z}) + O(3) =
\rho(z,\bar{z})
\end{equation}
to
\begin{equation}
w = A(z,\bar{z}) + B(\bar{z},\bar{z}) + O(3) .
\end{equation}
Then
\begin{equation} \label{eq:quadtermsplugged}
G\bigl(z,\rho(z,\bar{z})\bigr) =
A\Bigl(F\bigl(z,\rho(z,\bar{z})\bigr),
\bar{F}\bigl(\bar{z},\bar{\rho}(\bar{z},z)\bigr)\Bigr)
\\
+
B\Bigl(\bar{F}\bigl(\bar{z},\bar{\rho}(\bar{z},z)\bigr),
\bar{F}\bigl(\bar{z},\bar{\rho}(\bar{z},z)\bigr)\Bigr) + O(3)
\end{equation}
is true for all $z$.  The right hand side has no linear terms,
so the linear terms in $G$ do not depend on $z$.
That is, $G = \lambda^{-1} w + O(2)$, where $\lambda$ is a nonzero scalar
and the negative power is for convenience.

Let
$T = [ T_1, T_2 ]$ denote the matrix representing the linear terms
of $F$.  Here $T_{1}$ is an $n\times n$ matrix and $T_{2}$ is $n \times 1$.
Since the linear terms in $G$ do not depend on any $z_j$, 
$T_1$ is nonsingular.
Then the quadratic terms in \eqref{eq:quadtermsplugged} are
\begin{equation}
\lambda^{-1} \bigl(
\widetilde{A}(z,\bar{z}) + \widetilde{B}(\bar{z},\bar{z}) \bigr)
=
z^* T_{1}^* A T_{1} z +
z^* T_1^* B \overline{T}_{1} \bar{z} .
\end{equation}
In other words as matrices,
\begin{equation}
\widetilde{A} = \lambda T_{1}^* A T_{1} \qquad \text{and} \qquad
\widetilde{B} = \lambda T_1^* B \overline{T}_1 . \text{\qedhere}
\end{equation}
\end{proof}


We will need to at times reduce to the 3-dimensional case, and so we need
the following lemma.

\begin{lemma} \label{lemma:restriction}
Let $M \subset \C^{n+1}$, $n \geq 3$, be a real-analytic Levi-flat CR singular submanifold
of the form
\begin{equation}
w = A(z,\bar{z}) + B(\bar{z},\bar{z}) + O(3) ,
\end{equation}
where $A$ and $B$ are quadratic.
Let $L$ be a nonsingular $(n-2) \times n$ matrix $L$.  If
$A+B$ is not zero on the set $\{ L z = 0 \}$, then the submanifold
\begin{equation}
M_L = M \cap \{ L z = 0 \}
\end{equation}
is a Levi-flat CR singular submanifold.
\end{lemma}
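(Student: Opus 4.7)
The plan is to exhibit $M_L$ as a real-analytic graph over a complex $2$-plane and transfer the CR singularity and Levi-flatness of $M$ directly to $M_L$, closing with a real-analytic continuation.

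First, I would realize $M_L$ as a graph. Choose an injective $\C$-linear map $S\colon\C^2\to\C^n$ whose image is $\ker L$, and consider $\Phi(\zeta)=\bigl(S\zeta,\rho(S\zeta,\overline{S\zeta})\bigr)$, where $\rho(z,\bar z)=A(z,\bar z)+B(\bar z,\bar z)+O(3)$. Because $\rho=O(2)$, the differential $d\Phi(0)(\zeta)=(S\zeta,0)$ is injective, so $\Phi$ is a real-analytic embedding near $0$, and $M_L=\Phi(\C^2)$ (as a germ at the origin) is a real-analytic, real $4$-dimensional submanifold of the complex $3$-dimensional subspace $V=\ker L\times\C_w$. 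In the coordinates $(\zeta,w)$ on $V$, the defining equation of $M_L$ is
\[
 w=\widetilde A(\zeta,\bar\zeta)+\widetilde B(\bar\zeta,\bar\zeta)+O(3),
\]
with matrices $\widetilde A=\bar S^{T}AS$ and $\widetilde B=\bar S^{T}B\bar S$. The hypothesis that $A+B$ does not vanish on $\ker L$ is exactly $\widetilde A+\widetilde B\not\equiv 0$, so $M_L$ has a nondegenerate complex tangent at $0$; in particular $T_0 M_L=\{w=0\}$ is a complex subspace, which forces $0$ to be a CR singular point of $M_L$.

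Next, I would establish Levi-flatness at every point of $(M_L)_{CR}\cap M_{CR}$ by restricting the Levi-form. The real-valued functions $\Re(w-\rho)$ and $\Im(w-\rho)$ cut out $M$ in $\C^{n+1}$, and their restrictions to $V$ cut out $M_L$ in $V$ (with linearly independent differentials, since $dw$ does not vanish on $T_0V$). For $p\in(M_L)_{CR}\cap M_{CR}$, the inclusion $M_L\subset M$ gives $T_p^{1,0}M_L\subset T_p^{1,0}M$, and for any $X,Y\in T_p^{1,0}M_L$ the values $X\bar Y\,\Re(w-\rho)(p)$ and $X\bar Y\,\Im(w-\rho)(p)$ vanish because $M$ is Levi-flat at the CR point $p$. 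Therefore the Levi-form of $M_L$ vanishes on the open set $(M_L)_{CR}\cap M_{CR}$.

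Finally, I would invoke real-analytic continuation: the Levi-form of $M_L$ is a real-analytic tensor on the connected open dense subset $(M_L)_{CR}$ of $M_L$, so its vanishing on the nonempty open subset $(M_L)_{CR}\cap M_{CR}$ propagates to all of $(M_L)_{CR}$, yielding the claimed Levi-flatness. The main obstacle is this very last step, namely verifying that $(M_L)_{CR}\cap M_{CR}$ is nonempty, that is, that $M_L$ is not entirely contained in the CR singular locus of $M$. Since $M$ has nondegenerate complex tangent, its CR singular locus is a proper real-analytic subvariety of $M$ (as $M$ is not complex), and one checks using the graph description that this subvariety cannot absorb the real $4$-dimensional germ $M_L$, so $M_L$ meets $M_{CR}$ in an open dense subset near $0$.
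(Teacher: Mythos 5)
Your graph realization of $M_L$, the verification that $(A+B)|_{\ker L}\not\equiv 0$ makes $M_L$ CR singular at $0$, and the restriction of the Levi-form at points of $(M_L)_{CR}\cap M_{CR}$ followed by real-analytic continuation are all sound; this is essentially the easy half of the paper's own argument. The genuine gap is exactly the step you flag as ``the main obstacle,'' and your proposed justification for it would fail. You argue that the CR singular locus $S$ of $M$, being a proper real-analytic subvariety of $M$, ``cannot absorb the real $4$-dimensional germ $M_L$.'' That is false as a general principle: $S$ can have real dimension as large as $2n-1$ (for instance, for the type B.$\frac{1}{2}$ quadric $w=\abs{z_1}^2+\frac{1}{2}\bar{z}_1^2$ one has $S=\{z_1+\bar{z}_1=0,\ w=0\}$), so for $n\geq 3$ there is ample room inside $S$ for a $4$-dimensional germ, and nothing in the graph description alone prevents the containment $M_L\subset S$. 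A dimension count cannot close this step; the hypothesis on $A+B$ must enter, and your proposal never uses it here.

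There are two ways to repair it. The paper does \emph{not} rule out $M_L\subset S$ at all: it observes that $M_{L'}\not\subset S$ for a dense open set of $(n-2)\times n$ matrices $L'$, takes $L_k\to L$ with $M_{L_k}\not\subset S$, applies the easy case to each $M_{L_k}$, and passes to the limit of the Levi-forms at a CR point $p$ of $M_L$ (such $p$ exist because $(A+B)|_{\ker L}\not\equiv 0$ forces $M_L$ to be CR singular rather than complex). Alternatively, one can rule out the containment directly in your own notation: by Proposition~\ref{prop:crsingset}, $M_L\subset S$ means $\rho_{\bar{z}_j}(S\zeta,\overline{S\zeta})\equiv 0$ for all $j$, and the degree-one part in $(\zeta,\bar{\zeta})$ of $\rho_{\bar{z}_j}$ is $(AS\zeta)_j+2(B\overline{S\zeta})_j$; since the linear and antilinear parts must vanish separately, $AS=0$ and $B\bar{S}=0$, which gives $\widetilde{A}=\bar{S}^{t}AS=0$ and $\widetilde{B}=\bar{S}^{t}B\bar{S}=0$, contradicting your standing hypothesis $\widetilde{A}+\widetilde{B}\not\equiv 0$. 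With either repair (and a word on why the vanishing of the Levi-form propagates across the possibly disconnecting subvariety $(M_L)\setminus(M_L)_{CR}$, e.g.\ by expressing it as a real-analytic identity on all of $M_L$), your proof becomes complete; as written, the key step is asserted, not proved.
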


\begin{proof}
%
%
Clearly if
$M_L$ is not contained in the CR singularity of $M$, then $M_L$ is a
Levi-flat CR singular submanifold.
$M_{L'}$ is not contained in the CR singularity of $M$ for a dense open
subset of $(n-2) \times n$ matrices $L'$.  If $M_L$ is a subset of the CR
singularity of $M$, pick a CR point $p$ of $M_L$ then pick a sequence $L_n$
approaching $L$ such that $M_{L_n}$ are not contained in the CR singularity
of $M$.  As $A+B$ is not zero on the set $\{ L z = 0 \}$, then $M_L$ is
not a complex submanifold, and therefore a CR singular submanifold.
Then as the Levi-form of $M_{L_n}$ vanishes at all CR points of
$M_{L_n}$, the Levi-form of $M_L$ vanishes at $p$, so $M_L$ is Levi-flat.
\end{proof}


\section{Levi-flat quadrics}

Let us first focus on Levi-flat quadrics.  We will prove later that
the quadratic part of a Levi-flat submanifold is Levi-flat.
Let $M$ be defined in
$(z,w) \in \C^{n} \times \C$
by
\begin{equation}
w = A(z,\bar{z}) + B(\bar{z},\bar{z}) .
\end{equation}
Being Levi-flat has
several equivalent formulations.  The main idea is that the $T^{(1,0)} M \times
T^{(0,1)} M$ vector fields are completely integrable at CR points and we obtain a foliation
of $M$ at CR points by complex submanifolds of complex dimension
$n-1$.  An equivalent notion is that the Levi-map  is identically zero, see
\cite{BER:book}.  The Levi-map for a CR submanifold defined by two real equations
$\rho_1 = \rho_2 = 0$ (for $\rho_1$ and $\rho_2$ with linearly independent differentials)
is the pair of Hermitian forms
\begin{equation}
i \partial \bar{\partial} \rho_1
\quad \text{and} \quad
i \partial \bar{\partial} \rho_2 ,
\end{equation}
applied to $T^{(1,0)} M$ vectors.
The full quadratic forms
$i \partial \bar{\partial} \rho_1$ and $i \partial \bar{\partial} \rho_2$
of course depend on the defining equations themselves and are
therefore extrinsic information.  It is important to note
that for the Levi-map we restrict it to 
$T^{(1,0)} M$ vectors.
We can define these two forms
$i \partial \bar{\partial} \rho_1$ and $i \partial \bar{\partial} \rho_2$
even at a CR singular point $p \in M$.

These forms are the
complex Hessian matrices of the defining equations.  For our quadric
$M$ they are the real and imaginary parts of the $(n+1) \times (n+1)$ complex
matrix
\begin{equation}
\widetilde{A} =
\begin{bmatrix}
A & 0 \\
0 & 0
\end{bmatrix} ,
\end{equation}
where the variables are ordered as $(z_1,\ldots,z_n,w)$.

For $M$ to be Levi-flat,
the quadratic form defined by $\widetilde{A}$
has to be zero when restricted to the $n-1$
dimensional
space spanned by $T^{(1,0)}_p M$ for every $p \in M_{CR}$.
In other words for every $p \in M_{CR}$
\begin{equation}
v^* \widetilde{A} v = 0, \qquad \text{for all $v \in T^{(1,0)}_pM$}.
\end{equation}
The space $T^{(1,0)}_pM$ is of dimension $n-1$, and furthermore,
the vector $\frac{\partial}{\partial w}$ is not in $T^{(1,0)}_pM$.
Therefore, $z^* A z = 0$ for $z \in \C^n$ in a subspace of
dimension $n-1$.

Before we proceed let us note the following general fact about CR singular
Levi-flat submanifolds.

\begin{lemma} \label{lemma:dflimit}
Suppose that $M \subset \C^{n+1}$, $n \geq 2$,
is a Levi-flat connected real-analytic real codimension
2 submanifold, CR singular at the origin.  Then there exists
a germ of a complex analytic variety of complex dimension $n-1$
through the origin, contained in $M$.
\end{lemma}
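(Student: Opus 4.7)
I would use a compactness argument: take leaves of the Levi foliation through a sequence of CR points converging to the origin, and extract a limit complex analytic variety.

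The CR singular set $\Sigma = M \setminus M_{CR}$ is a proper real-analytic subset of $M$, so $M_{CR}$ is dense in $M$ and I can choose $p_k \in M_{CR}$ with $p_k \to 0$. By Levi-flatness, through each $p_k$ runs a leaf $L_k$ of the Levi foliation: a complex submanifold of $\C^{n+1}$ of complex dimension $n-1$ contained in $M_{CR}$. Fix a small polydisc $U$ around the origin. Then each $L_k \cap U$ is a pure $(n-1)$-dimensional complex analytic subset of $U \cap M_{CR}$, with a uniform upper bound on its $(2n-2)$-dimensional Hausdorff volume (obtained from $L_k \subset M$ and the finite $2n$-volume of $M \cap \overline U$ via a slicing argument), and a uniform lower bound from the Lelong volume estimate $\mathrm{vol}(L_k \cap B(p_k, r)) \geq c\, r^{2n-2}$ for $r$ bounded away from zero.

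Next I would invoke the Bishop--Harvey--Shiffman compactness theorem for analytic cycles of bounded mass: after extracting a subsequence, the currents of integration $[L_k]$ converge weakly on $U \setminus \Sigma$ to a nontrivial positive closed $(n-1,n-1)$-current represented by integration over a pure $(n-1)$-dimensional analytic cycle $V \subset U \setminus \Sigma$. Since $p_k \in L_k$ and $p_k \to 0$, the closure $\overline V$ in $U$ contains the origin; since each $L_k \subset M$ and $M \cap U$ is relatively closed in $U$, $\overline V \subset M$. A component of $\overline V$ through the origin is the desired germ of a complex analytic variety of dimension $n-1$ contained in $M$.

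The hard part will be extending the limit cycle $V$ across $\Sigma$ as a genuine complex analytic subvariety of $U$. When $\Sigma$ has real dimension at most $2n-2$, this is immediate from Bishop's extension theorem across closed sets of vanishing $(2n-1)$-dimensional Hausdorff measure. In general, however, $\Sigma$ can have real dimension $2n-1$, for instance when $\rho = \frac{1}{2i}(z_1\bar z_1 - \frac12 \bar z_1^2)$ one checks that $M$ is Levi-flat with $\Sigma = \{\operatorname{Im} z_1 = 0\} \cap M$; in such cases one must exploit the real-analytic Levi-flat structure of $M$ to propagate the leaves analytically through $\Sigma$. An alternative route, bypassing the extension issue, is to analyze the Segre variety of the origin directly: complexify the defining equation of $M$ to $w = \rho(z, \zeta)$ and its conjugate, set $(\zeta, \bar w) = 0$, and argue that some irreducible component of the resulting complex analytic subset of $\C^{n+1}$ has dimension $n-1$ and is contained in $M$ by virtue of Levi-flatness.
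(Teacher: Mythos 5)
Your overall strategy --- leaves through CR points $p_k \to 0$, then a limit argument --- is the same as the paper's, but the paper disposes of the limit step in one stroke by citing a theorem of Forn{\ae}ss (Theorem 6.23 in \cite{kohn:subell}, proved by the complexification methods of Diederich--Forn{\ae}ss \cite{DF}): if germs of $q$-dimensional complex varieties contained in a \emph{real-analytic} set pass through points converging to $p$, then a complex variety of dimension at least $q$ through $p$ lies in the set. That theorem needs no mass bounds and, crucially, its Segre-type proof is exactly what carries the variety across the CR singular set. Your substitute --- Bishop--Harvey--Shiffman compactness for currents of integration --- leaves genuine gaps at both places where the Forn{\ae}ss theorem does the work. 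First, the mass bounds: the lower Lelong estimate ``$\mathrm{vol}(L_k \cap B(p_k,r)) \geq c\,r^{2n-2}$ for $r$ bounded away from zero'' is circular, since for large $k$ the ball $B(p_k,r)$ contains the origin and a piece of $\Sigma$, where the leaf is precisely \emph{not} known to be a closed analytic subset --- that is the content of the lemma being proved; the radius on which Lelong's estimate legitimately applies shrinks with $\operatorname{dist}(p_k,\Sigma) \to 0$. Without it, the weak limit of $[L_k]$ on $U \setminus \Sigma$ may simply be zero, with all mass collapsing into $\Sigma$, and $p_k \in L_k$, $p_k \to 0$ does not by itself force $0 \in \overline{V}$ (weak convergence of currents does not control supports from below). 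The upper bound is also not established: leaves are only injectively immersed in $M_{CR}$, need not be closed analytic subsets of $U \setminus \Sigma$, and the coarea/slicing argument from the finite $2n$-volume of $M$ yields an \emph{average} area bound over the leaf space, not a uniform bound for your particular sequence of leaves.

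Second, and as you yourself flag, the extension of the limit cycle across $\Sigma$ is the heart of the matter and is left unresolved. Your observation that Bishop's extension theorem suffices only when $\dim_{\R}\Sigma \leq 2n-2$ is correct, and your example is essentially the quadric B.$\frac{1}{2}$, for which the paper's table in \S\ref{sec:sizeofsing} confirms $\dim_{\R} S = 2n-1$; so the problematic case is real and cannot be excluded. The two escape routes you sketch in the final sentences --- exploiting real-analyticity to propagate leaves through $\Sigma$, or arguing directly with the Segre variety of the origin --- are indeed the right direction (the Forn{\ae}ss--Diederich--Forn{\ae}ss proof is of exactly this complexification type, and \S\ref{sec:quadsegre} of the paper confirms that limit leaves sit inside $Q_0$), but neither is carried out: containment of a variety in $Q_p$ alone does not produce an $(n-1)$-dimensional component of $Q_0$ lying in $M$ (for types B.0 and C.0 the Segre variety $Q_0$ has dimension $n$ and is \emph{not} contained in $M$). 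As written, the proposal proves the lemma only when $\Sigma$ is small enough for Bishop's theorem, and the remaining case is the one requiring the cited theorem or an equivalent complexification argument.
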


\begin{proof}
Through each point of $M_{CR}$ there exists a germ of a complex variety
of complex dimension $n-1$ contained in $M$.  The set
of CR points is dense in $M$.  Take a sequence $p_k$ of CR points
converging to the origin and take complex varieties of dimension $n-1$,
$W_k \subset M$ with $p_k \in W_k$.  A theorem of Forn{\ae}ss
(see Theorem 6.23 in \cite{kohn:subell} for a proof using
the methods of Diederich and Forn{\ae}ss \cite{DF}) implies that there exists
a variety through $W \subset M$ with $0 \in W$ and of complex dimension at least
$n-1$.
\end{proof}

Let us first concentrate on $n=2$.
When $n=2$, $T^{(1,0)} M$ is one dimensional
at CR points.
Write
\begin{equation}
A =
\begin{bmatrix}
a_{11} & a_{12} \\
a_{21} & a_{22}
\end{bmatrix}
,
\qquad
B =
\begin{bmatrix}
b_{11} & b_{12} \\
b_{12} & b_{22}
\end{bmatrix}
.
\end{equation}
Note that $B$ is symmetric.
A short computation
shows that the vector field can be
written as
\begin{equation}
\alpha \frac{\partial}{\partial w} +
\beta_1 \frac{\partial}{\partial z_1} +
\beta_2 \frac{\partial}{\partial z_2}
=
\alpha \frac{\partial}{\partial w} +
\beta \cdot \frac{\partial}{\partial z} ,
\end{equation}
where
\begin{equation}
\begin{aligned}
& \beta_1 = \bar{a}_{21}\bar{z}_1 + \bar{a}_{22}\bar{z}_2
+ 2 \bar{b}_{12}z_1 + 2 \bar{b}_{22} z_2 , \\
& \beta_2 = - \bar{a}_{11}\bar{z}_1 - \bar{a}_{12}\bar{z}_2
- 2 \bar{b}_{11}z_1 - 2 \bar{b}_{12} z_2 , \\
& \alpha = a_{11} \bar{z}_1 \beta_1 + a_{21} \bar{z}_2 \beta_1
+ a_{12} \bar{z}_1 \beta_2 + a_{22} \bar{z}_2 \beta_2  .
\end{aligned}
\end{equation}
Note that since the CR singular set is defined by $\beta_1=\beta_2=0$,  then $M_{CR}$ is dense in $M$.
Thus we need to check that
\begin{equation}
\begin{bmatrix}
\beta^* & \bar{\alpha}
\end{bmatrix}
\begin{bmatrix}
A & 0 \\
0 & 0
\end{bmatrix}
\begin{bmatrix}
\beta \\
\alpha
\end{bmatrix}
=
\beta^* A \beta
\end{equation}
is identically zero for $M$ to be Levi-flat.

If $A$ is the zero matrix, then $M$ is automatically Levi-flat.  We 
diagonalize $B$ via $T$ into a diagonal matrix with ones and zeros on
the diagonal.
We obtain (recall $n=2$) the submanifolds:
\begin{equation}
\begin{aligned}
& w = \bar{z}_1^2 , \qquad \qquad \text{or}\\
& w = \bar{z}_1^2 + \bar{z}_2^2 .
\end{aligned}
\end{equation}
The first submanifold is of the form $M \times \C$
where $M \subset \C^2$ is a Bishop
surface.

Let us from now on suppose that $A\not=0$.

As $M$ is Levi-flat, then
through each CR point $p = (z_p,w_p) \in M_{CR}$ we have a
complex submanifold of dimension 1 in $M$.
It is well-known that this submanifold is contained in
the Segre variety (see also \S~\ref{sec:quadsegre})
\begin{equation}
w = A(z,\bar{z}_p) + B(\bar{z}_p,\bar{z}_p), \qquad
\bar{w}_p = \overline{A}(\bar{z}_p,z) + \overline{B}(z,z) .
\end{equation}
By Lemma~\ref{lemma:dflimit} 
we obtain a complex variety $V \subset M$ of dimension
one through the origin.
Suppose without loss of generality that $V$ is irreducible.
$V$ has to be contained in the Segre
variety at the origin, in particular $w=0$ on $V$.  Therefore, to
simplify notation,
let us consider $V$ to be subvariety of $\{ w = 0 \}$.
Denote by $\overline{V}$ the complex conjugate of $V$.  Then as $V$ is
irreducible, then $V \times \overline{V}$ is also irreducible (the smooth
part of $V$ is connected and so the smooth part of $V \times \overline{V}$
is connected, see
\cite{Whitney:book}).
Hence, by complexifying, we have
$A(z,\bar{\xi}) + B(\bar{\xi},\bar{\xi}) = 0$ for all $z \in V$ and $\xi \in V$.


If $B \not= 0$, then setting $z=0$, we have
$B(\bar{\xi},\bar{\xi}) = 0$ on $V$.  As $B$ is homogeneous and $V$
is irreducible, $V$ is a one dimensional complex line.
If $B=0$, then
$A(z,\bar{\zeta})=0$ for $z,\zeta\in V$ as mentioned above.
We consider two cases. Suppose first that every $\sum_{j=1}^2 a_{ij}\bar{\zeta}_j$ is identically zero
 for all $\zeta\in V$ and $i=1$ and $i=2$. Then $V$ is contained in some  complex line $\sum_{j=1}^2\bar{a}_{ij}\zeta_j=0$.
 Suppose now that $A(z,\bar{\zeta}_*)$ is not identically zero for some
$\zeta_* \in V$. Then $V$ is contained in the complex line
 $A(z,\bar{\zeta}_*)=0$.  This shows that $V$ is a complex line.

Thus as $A(z,\bar{z}) + B(\bar{z},\bar{z})$ is zero on a one dimensional
linear subspace, we make this subspace
$\{ z_1 = 0 \}$ and so each monomial
in $A(z,\bar{z}) + B(\bar{z},\bar{z})$
is divisible by either $z_1$ or $\bar{z}_1$.
Therefore, $A$ and $B$ are matrices of the form
\begin{equation}
\begin{bmatrix}
* & * \\
* & 0
\end{bmatrix} ,
\end{equation}
that is $a_{22} = 0$ and $b_{22} = 0$.

To normalize the pair $(A,B)$,
we apply arbitrary invertible transformations $(T,\lambda) \in GL_{n}(\C) \times
\C^*$
as
\begin{equation}
(A,B) \mapsto (\lambda T^* A T,\lambda T^* B\overline{T}) .
\end{equation}


Recall that we are assuming that $A\not=0$.
If $a_{21} = 0$ or $a_{12} = 0$, then $A$ is rank one and via
a transformation $T$ of the form
\begin{equation}\label{z1z1}
z_1'=z_1, \quad z_2'=z_2+cz_1
\qquad \text{or} \qquad
z_2'=z_1, \quad z_1'=z_2+cz_1
\end{equation}
and rescaling by nonzero $\lambda$, the matrix
$A$ can be put in the form
\begin{equation}
\begin{bmatrix}
0 & 1 \\
0 & 0
\end{bmatrix} ,
\qquad \text{or} \qquad
\begin{bmatrix}
1 & 0 \\
0 & 0
\end{bmatrix} .
\end{equation}
The transformation $T$ and $\lambda$ must also be applied to $B$
and this could possibly make $b_{22} \not= 0$.
However, we will show that we actually have $b_{22}=0$. Thus $B=0$ on $z_1=0$ still holds true.

Let us first focus on
\begin{equation}
A =
\begin{bmatrix}
1 & 0 \\
0 & 0
\end{bmatrix} .
\end{equation}
We apply the $T^{(1,0)}$ vector field we computed above.
Only $a_{11}$ is nonzero in $A$.
Therefore $\beta^* A \beta$, which
must be identically zero, is
\begin{multline}
0 = \beta^* A \beta =
\bar{\beta}_1 \beta_1 =
\overline{
(
2 \bar{b}_{12}z_1 + 2 \bar{b}_{22} z_2
)
}
(
2 \bar{b}_{12}z_1 + 2 \bar{b}_{22} z_2
)
\\
=
4
(
\abs{b_{12}}^2z_1\bar{z}_1 +
\abs{b_{22}}^2z_2\bar{z}_2 +
b_{12}\bar{b}_{22}\bar{z}_1z_2 +
\bar{b}_{12}b_{22}z_1\bar{z}_2
) .
\end{multline}
This polynomial must be identically zero and hence all coefficients must
be identically zero.  So $b_{12} = 0$ and $b_{22}=0$.
In other words, only $b_{11}$ in $B$ can
be nonzero, in which case we make it nonnegative via a diagonal $T$ to obtain
the quadric
\begin{equation}
w = \abs{z_1}^2 +  \gamma\bar{z}_1^2 , \quad \gamma \geq 0 .
\end{equation}

Next let us focus on
\begin{equation}
A =
\begin{bmatrix}
0 & 1 \\
0 & 0
\end{bmatrix} .
\end{equation}
As above, we compute
$\beta^* A \beta$:
\begin{multline}
0 = \beta^* A \beta =
\bar{\beta}_1 \beta_2 =
\overline{
(
2 \bar{b}_{12}z_1 + 2 \bar{b}_{22} z_2
)
}
(
- \bar{z}_2
- 2 \bar{b}_{11}z_1 - 2 \bar{b}_{12} z_2
)
\\
=
- 2b_{12} \bar{z}_1\bar{z}_2
- 2 b_{22} \bar{b}_{11} z_1\bar{z}_2
- 4 \bar{b}_{11}b_{12}z_1\bar{z}_1 - 4 b_{12}\bar{b}_{12} \bar{z}_1 z_2
- 2b_{22}\bar{z}_2^2 -4 b_{22}\bar{b}_{12}  z_2\bar{z}_2.
\end{multline}
Again, as this polynomial must be identically zero, all coefficients
must be zero.  Hence $b_{12} = 0$ and $b_{22} = 0$.
Again only $b_{11}$ is left possibly nonzero.

Suppose that $b_{11} \not= 0$.  Then let $s$ be such that
$b_{11} \bar{s}^2 = 1$, and let $\bar{t} = \frac{1}{\bar{s}}$.
The matrix $T = \left[ \begin{smallmatrix} s & 0 \\ 0 & t \end{smallmatrix}
\right]$ is such that $T^* A T = A$
and
 $T^* B \overline{T} = \left[ \begin{smallmatrix} 1 & 0 \\ 0 & 0
\end{smallmatrix} \right]$.
If $b_{11} = 0$, we have $B=0$.  Therefore we have obtained two
distinct possibilities for $B$, and thus
the two submanifolds
\begin{equation}
\begin{aligned}
& w = \bar{z}_1z_2 , \qquad \qquad \text{or} \\
& w = \bar{z}_1z_2 + \bar{z}_1^2 .
\end{aligned}
\end{equation}
We emphasize that after $A$ is normalized by a transformation of the form
\eqref{z1z1}, only one coordinate change is needed
to normalize $b_{11}$ and this coordinate change preserves $A$. Both are required in a reduction proof for higher dimensions.

We have handled the rank one case.
Next we focus on the rank two case, that is $a_{21} \not= 0$
and $a_{12} \not=0$ (recall $a_{22} = 0$).
%
We normalize (rescale) $A$ to have $a_{12} = 1$ and take
\begin{equation}
A=
\begin{bmatrix}
a_{11} & 1 \\
a_{21} & 0
\end{bmatrix} .
\end{equation}
Again, let us compute $\beta^* A \beta$.  In the computation for
the rank 2 case, 
recall that we have not
done any normalization other than rescaling, so we can safely still assume that $b_{22} = 0$:
\begin{multline}
0 = \beta^* A \beta =
a_{11}\bar{\beta}_1 \beta_1 +
\bar{\beta}_1 \beta_2 +
a_{21}
\beta_1 \bar{\beta}_2
\\
=
a_{11}\overline{
(\bar{a}_{21}\bar{z}_1
+ 2 \bar{b}_{12}z_1
)
}
(\bar{a}_{21}\bar{z}_1
+ 2 \bar{b}_{12}z_1
)
+
\overline{
(\bar{a}_{21}\bar{z}_1
+ 2 \bar{b}_{12}z_1
)
}
(- \bar{a}_{11}\bar{z}_1 - \bar{z}_2
- 2 \bar{b}_{11}z_1 - 2 \bar{b}_{12} z_2
)
\\
+
a_{21}
\overline{
(- \bar{a}_{11}\bar{z}_1 - \bar{z}_2
- 2 \bar{b}_{11}z_1 - 2 \bar{b}_{12} z_2
)
}
(\bar{a}_{21}\bar{z}_1
+ 2 \bar{b}_{12}z_1
)
\\
=
(-4 \abs{b_{12}}^2 - \abs{a_{21}}^2) \bar{z}_1 z_2
+ \text{(other terms)}.
\end{multline}
All coefficients must be zero. So
$a_{21}=0$, and $A$ would not be rank 2.

Let us now focus on $n > 2$.  First let us suppose that $A=0$.  Then
as before $M$ is automatically Levi-flat and
by diagonalizing $B$ we obtain the $n$ distinct submanifolds:
\begin{equation}
\begin{aligned}
w & = \bar{z}_1^2 , \\
w & = \bar{z}_1^2 + \bar{z}_2^2 , \\
& ~\vdots \\
w & = \bar{z}_1^2 + \bar{z}_2^2 + \dots + \bar{z}_{n}^2  .
\end{aligned}
\end{equation}

Thus suppose from now on that $A \not=0$.
As before we have an irreducible $n-1$ dimensional
variety $V \subset M$ through the origin, such that
$w = 0$ and
$A(z,\bar{z}) + B(\bar{z},\bar{z}) = 0$ on $V$.

We wish to show that
$A(z,\bar{z}) + B(\bar{z},\bar{z}) = 0$ on an $n-1$ dimensional linear subspace.
For any $\xi \in V$ we obtain
$A(z,\bar{\xi}) + B(\bar{\xi},\bar{\xi}) = 0$ for all $z \in V$.
If $V$ is contained
in the kernel of the matrix $A^*$, then we have that $V$ is
a linear subspace of dimension $n-1$.
So suppose that $\bar{\xi}$ is
not in the kernel of the matrix $A^t$.  Then for a fixed $\bar{\xi}$
we obtain a linear
equation $A(z,\bar{\xi}) + B(\bar{\xi},\bar{\xi}) = 0$ for $z \in V$.

Therefore, as
$A(z,\bar{z}) + B(\bar{z},\bar{z})$ needs to be zero on an $n-1$ dimensional
subspace we can just make this $\{ z_1 = 0 \}$ and so each monomial
is divisible by either $z_1$ or $\bar{z}_1$.
Therefore, $A$ and $B$ is of the form
\begin{equation} \label{eq:initialformofAB}
\begin{bmatrix}
* & * & \cdots & * \\
* & 0 & \cdots & 0 \\
\vdots & \vdots & \ddots & \vdots \\
* & 0 & \cdots & 0
\end{bmatrix} ,
\end{equation}
that is, only first column and first row are nonzero.
We normalize $A$ via
\begin{equation}
(A,B) \mapsto (\lambda T^* A T,\lambda T^* B\overline{T}) ,
\end{equation}
as before.
We use column operations
on all but the first column
to make all but the first two columns have nonzero elements.  Similarly
we can do row operations on all but the first two rows and to make all but
first three rows nonzero.  That is $A$ has the form
\begin{equation}
\begin{bmatrix}
* & * & 0 & \cdots & 0 \\
* & 0 & 0 & \cdots & 0 \\
* & 0 & 0 & \cdots & 0 \\
0 & 0 & 0 & \cdots & 0 \\
\vdots & \vdots & \vdots & \ddots & \vdots \\
0 & 0 & 0 & \cdots & 0
\end{bmatrix} .
\end{equation}

By Lemma~\ref{lemma:restriction}, setting $z_3 = \cdots = z_n = 0$ we obtain a Levi-flat submanifold where
the matrix corresponding to $A$ is the principal $2 \times 2$ submatrix of
$A$.  This submatrix cannot be of rank $2$ and hence either $a_{12} = 0$
or $a_{21} = 0$.
If $a_{21} = 0$ and $a_{12} \not= 0$, then setting $z_2 = z_3$, and $z_4 =
\cdots = z_n = 0$ we again must have a rank one matrix and therefore
$a_{31} = 0$.

Therefore, if $a_{12} \not= 0$ then all but $a_{11}$ and $a_{12}$ are zero.
If $a_{12} = 0$, then via a further linear map not involving $z_1$ we can
ensure that $a_{31} = 0$.  In particular, $A$ is of rank 1 and
can only be nonzero in the principal $2 \times 2$ submatrix.
At this point $B$ is still of the form
\eqref{eq:initialformofAB}.

Via a linear change of coordinates in the first two variables, the principal 
$2 \times 2$ submatrix of $A$ can be normalized into one of the 2 possible forms
\begin{equation}
\begin{bmatrix}
1 & 0 \\
0 & 0
\end{bmatrix} ,
\qquad \text{or} \qquad
\begin{bmatrix}
0 & 1 \\
0 & 0
\end{bmatrix} .
\end{equation}
Recall that $A=0$ was already handled.

Via the 2 dimensional computation  we obtain that $b_{22} = b_{12} = b_{21} = 0$.
We use a linear map in $z_1$ and $z_2$ to also normalize the 
principal $2 \times 2$ matrix of $B$, so that the submanifold restricted to
$(z_1,z_2,w)$ is in one of the normal forms B.$\gamma$, C.0, or C.1.

Finally we need to show that all entries of $B$ other than $b_{11}$ are zero.
As we have done a linear change of coordinates in $z_1$ and $z_2$,
$B$ may not be in the form \eqref{eq:initialformofAB}, but
we know $b_{jk} = 0$ as long as $j > 2$ and $k > 2$.

Now fix $k = 3,\ldots,n$.  Restrict to the submanifold given by
$z_1 = \lambda z_2$ for $\lambda = 1$ or $\lambda = -1$, and $z_j = 0$ for all
$j=3,\ldots,n$ except for $j=k$.  In the variables $(z_2,z_k,w)$,
we obtain a Levi-flat submanifold where the
matrix corresponding to $A$ is
$\left[ \begin{smallmatrix}
\lambda & 0 \\
0 & 0
\end{smallmatrix} \right]$.  The matrix corresponding to $B$ is
\begin{equation}
\begin{bmatrix}
b_{11} & b_{1k} + \lambda b_{2k} \\
b_{1k} + \lambda b_{2k} &  0
\end{bmatrix} .
\end{equation}
Via the 2 dimensional calculation we have
$b_{1k} + \lambda b_{2k} = 0$.  As this is true for $\lambda = 1$ and
$\lambda = -1$, we get that $b_{1k} = b_{2k} = 0$.

We have proved the following classification result.  It is not difficult  to
see that the submanifolds in the list are biholomorphically inequivalent
by Proposition~\ref{prop:astensors}.  The ranks of $A$ and $B$ are
invariants.  It is obvious that the $A$ matrix of B.$\gamma$
and C.x submanifolds are inequivalent.  Therefore, it is only necessary
to directly check that B.$\gamma$ are inequivalent for
different $\gamma \geq 0$, which is easy.

\begin{lemma}
If $M$ defined in $(z,w) \in \C^{n} \times \C$, $n \geq 1$, by
\begin{equation}
w = A(z,\bar{z}) + B(\bar{z},\bar{z})
\end{equation}
is Levi-flat, then $M$ is biholomorphic to one and exactly one of the
following:
\begin{equation}
\begin{aligned}
\text{(A.1)} \quad & w = \bar{z}_1^2 , \\
\text{(A.2)} \quad & w = \bar{z}_1^2 + \bar{z}_2^2 , \\
& \vdots \\
\text{(A.$n$)} \quad & w = \bar{z}_1^2 + \bar{z}_2^2 + \dots + \bar{z}_{n}^2  , \\[10pt]
\text{(B.$\gamma$)} \quad & w = \abs{z_1}^2 +  \gamma\bar{z}_1^2 , ~~ \gamma
\geq 0 , \\[10pt]
\text{(C.0)} \quad & w = \bar{z}_1z_2 , \\
\text{(C.1)} \quad & w = \bar{z}_1z_2 + \bar{z}_1^2 .
\end{aligned}
\end{equation}
\end{lemma}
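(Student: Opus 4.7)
The plan is to exploit the Levi-flat condition $\beta^* A \beta \equiv 0$ on $M_{CR}$ (where $\beta$ is the $z$-part of the generators of $T^{(1,0)}M$, which we compute explicitly), combined with the normalization group action $(A,B)\mapsto(\lambda T^* A T,\lambda T^* B\overline T)$ from Proposition~\ref{prop:astensors}. Since $M_{CR}$ is dense in $M$ (the CR singular set is cut out by $\beta=0$), this vanishing is the only constraint, and we only need to bring $(A,B)$ into canonical form under the $GL_n(\C)\times\C^*$ action preserving the form \eqref{theeq}.

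The first split is on whether $A = 0$. If $A=0$, the Levi condition is automatic, and the action on $B$ alone is $B\mapsto \lambda T^* B\overline T$. Since $B$ is symmetric, Takagi factorization (a consequence of this action) diagonalizes $B$ with $\rank B$ nonnegative entries on the diagonal, which a further diagonal rescaling reduces to ones; this yields A.1 through A.$n$.

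If $A\neq 0$, I would invoke Lemma~\ref{lemma:dflimit} to produce a germ of an $(n{-}1)$-dimensional complex variety $V\subset M$ through $0$. Since the leaf of the Levi foliation through a CR point lies in its Segre variety at $0$, taking a limit shows $V\subset\{w=0\}$ and (after complexifying the defining equation restricted to $V$) that $A(z,\bar\xi)+B(\bar\xi,\bar\xi)=0$ for all $z,\xi\in V$. I would next show $V$ is actually a linear subspace, by splitting into the subcases $B|_V\equiv 0$ (then $z\mapsto A(z,\bar\xi_*)$ is a nonzero linear form vanishing on $V$, unless $V\subset \ker A^t$, in which case $V$ is even simpler) and $B|_V\not\equiv 0$ (then $B(\bar\xi,\bar\xi)=0$ on the irreducible $V$ forces $V$ to be a cone on an irreducible homogeneous quadric, and combined with the linear equations in $z$ obtained by varying $\xi$, $V$ is linear). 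Moving $V$ to $\{z_1=0\}$ by a linear change, every monomial in $A+B$ becomes divisible by $z_1$ or $\bar z_1$, so only the first row and column of $A$ and of $B$ are nonzero.

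Now I would reduce to the $2$-dimensional calculation via Lemma~\ref{lemma:restriction}. Using row/column operations that preserve the zero pattern, I would force $A$ to have rank $\le 1$ with its single nonzero principal $2\times 2$ block either $\left[\begin{smallmatrix}1&0\\0&0\end{smallmatrix}\right]$ or $\left[\begin{smallmatrix}0&1\\0&0\end{smallmatrix}\right]$; the rank $2$ case is ruled out by expanding $\bar\beta_1\beta_2+a_{11}\bar\beta_1\beta_1+a_{21}\beta_1\bar\beta_2$ and observing that the $\bar z_1 z_2$ coefficient forces $a_{21}=0$. In either rank $1$ case, direct expansion of $\beta^* A\beta\equiv 0$ forces $b_{12}=b_{22}=0$, and a final diagonal rescaling normalizes $b_{11}$ to $\gamma\ge 0$ (case B.$\gamma$) or to $0$ or $1$ (cases C.0, C.1). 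To extend to $n>2$, I would use the restrictions $z_j=0$ ($j\ge 3$) to get the above, and then slice with $z_1=\pm z_2$ and one extra variable $z_k$ to see, via the $2$-dimensional calculation, that $b_{1k}\pm b_{2k}=0$, hence $b_{1k}=b_{2k}=0$ for $k\ge 3$, eliminating all remaining entries of $B$. Inequivalence of the listed forms follows from the fact that $\rank A$ and $\rank B$ are invariants of the orbit, together with a direct check that the Bishop parameter $\gamma\ge 0$ distinguishes the B.$\gamma$ family.

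The main obstacle I expect is the linearity of $V$: the reduction scheme from that point on is essentially algebraic bookkeeping, but ruling out nonlinear $V$ requires the case-split above and careful use of irreducibility of $V\times\overline V$ to get the bi-homogeneous identity $A(z,\bar\xi)+B(\bar\xi,\bar\xi)=0$ for all pairs.
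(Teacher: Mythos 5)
Your proposal follows essentially the same route as the paper's proof: the same Levi condition $\beta^* A \beta \equiv 0$ at CR points combined with the $(A,B) \mapsto (\lambda T^* A T, \lambda T^* B \overline{T})$ action, the same use of Lemma~\ref{lemma:dflimit} and Segre varieties to produce the variety $V$ and show it is linear, the same reduction to $n=2$ via Lemma~\ref{lemma:restriction} with the rank-two exclusion from the $\bar{z}_1 z_2$ coefficient, the same slices $z_1 = \pm z_2$ to kill $b_{1k}, b_{2k}$, and the same rank-based inequivalence argument. The only sketch-level slips are cosmetic: your subcase labelled $B|_V \not\equiv 0$ should read $B \not\equiv 0$ as a form (setting $z=0$ in $A(z,\bar{\xi}) + B(\bar{\xi},\bar{\xi}) = 0$ shows $B$ always vanishes on $V$), and rank invariants plus the Bishop parameter alone do not separate B.$\gamma$ from C.x with matching ranks --- one also needs, as the paper notes, that the $A$-matrices of the B and C types are inequivalent under the action of Proposition~\ref{prop:astensors}.
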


The normalizing transformation used above is linear.

\begin{lemma}
If $M$ defined by
\begin{equation}
w = A(z,\bar{z}) + B(\bar{z},\bar{z}) + O(3)
\end{equation}
is Levi-flat at all points where $M$ is CR, then the quadric
\begin{equation}
w = A(z,\bar{z}) + B(\bar{z},\bar{z})
\end{equation}
is also Levi-flat.
\end{lemma}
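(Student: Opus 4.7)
The plan is a dilation argument. For $t \in \C \setminus \{0\}$ (real), the map $\Phi_t(z,w) = (tz, t^2 w)$ is a biholomorphism of $\C^{n+1}$. Write $\rho(z,\bar z) = A(z,\bar z) + B(\bar z,\bar z) + O(3)$ so that $M$ is given by $w = \rho(z,\bar z)$, and set $M_t := \Phi_t^{-1}(M)$. A direct computation shows that $M_t$ is defined by
\begin{equation*}
w = \tfrac{1}{t^2}\rho(tz, t\bar z) = A(z,\bar z) + B(\bar z, \bar z) + t\, R(t,z,\bar z),
\end{equation*}
for a function $R$ real-analytic in all variables near the origin (including $t = 0$). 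Thus as $t \to 0^+$, the defining function of $M_t$ converges, together with all its derivatives on compact sets, to that of the quadric $M_0 \colon w = A(z,\bar z) + B(\bar z,\bar z)$.

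The key step is to transfer Levi-flatness from $M_t$ to the limit $M_0$. Since each $\Phi_t$ is biholomorphic, $M_t$ is biholomorphically equivalent to $M$, and therefore Levi-flat at its CR points by hypothesis. Fix a CR point $p_0 = (z_0, \rho^{(0)}(z_0, \bar z_0))$ of $M_0$, where $\rho^{(t)}$ denotes the defining function of $M_t$; assuming $A + B \not\equiv 0$, such points form a dense open subset of $M_0$, since the CR-singular locus is cut out by the real-analytic equation $\nabla_z \overline{\rho^{(0)}}(z, \bar z) = 0$. By continuity in $t$, for all sufficiently small $t > 0$ the point $p_t := (z_0, \rho^{(t)}(z_0, \bar z_0))$ is a CR point of $M_t$, and therefore the Levi-form of $M_t$ vanishes at $p_t$. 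Because the Levi-form is a polynomial expression in the coefficients of $\rho^{(t)}$ and its first and second derivatives evaluated at $p_t$, it depends continuously on $t$, and taking $t \to 0^+$ forces the Levi-form of $M_0$ at $p_0$ to vanish.

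Since CR points of $M_0$ are dense, this shows $M_0$ is Levi-flat. The remaining case $A + B \equiv 0$ is trivial: the quadric is then $\{w = 0\}$, a complex submanifold, hence vacuously Levi-flat.

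No serious obstacle arises in this approach; the main virtue of the dilation $\Phi_t$ is that it cleanly decouples the quadratic part from the higher-order remainder, and passing to the limit is justified by the real-analytic dependence of $\rho^{(t)}$ on $t$. The only point requiring attention is the verification that a CR point of $M_0$ persists as a CR point of $M_t$ for small $t$, which is immediate from openness of the non-vanishing of $\nabla_z \overline{\rho^{(t)}}$. An alternative route would be to expand the Levi-flatness identity directly as a vanishing real-analytic identity on $M^{CR}$ and extract its lowest-order homogeneous part, yielding the same conclusion through a lengthier calculation in the same style as in the $n=2$ case carried out earlier in the section.
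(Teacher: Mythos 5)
Your dilation argument is correct, but it is a genuinely different route from the paper's. The paper proves this lemma by a direct computation: it writes down explicit spanning $T^{(1,0)}$ vector fields $X_j = a_j\,\partial/\partial z_1 + b\,\partial/\partial z_j + c\,\partial/\partial w$ whose coefficients are built from first derivatives of the defining function, observes that the linear truncations $\widetilde{a}_j,\widetilde{b}$ give exactly the $T^{(1,0)}$ fields of the quadric, and then identifies the lowest-degree homogeneous part of the identity $\pi_*(X_j)^*\,L\,\pi_*(X_j)=0$ with $\pi_*(\widetilde{X}_j)^*\,A\,\pi_*(\widetilde{X}_j)$ --- i.e., it is precisely the ``lengthier calculation'' you sketch as an alternative in your last sentence. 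Your rescaling $\Phi_t(z,w)=(tz,t^2w)$ is the geometric incarnation of taking that lowest-order part: it buys conceptual cleanness, avoids degree bookkeeping, and generalizes readily (any property preserved by biholomorphisms and closed under $C^2$-convergence of defining functions passes to the quadric). What the paper's computation buys in exchange is explicitness: it exhibits the quadric's $T^{(1,0)}$ fields as truncations of those of $M$, information in the same style used elsewhere in the classification. One point in your write-up deserves a slightly more careful statement: the Levi-form at $p_t$ is not a scalar function of $t$ but a form on the varying subspace $T^{(1,0)}_{p_t}M_t$, so ``depends continuously on $t$'' should be justified by producing, for each $v_0\in T^{(1,0)}_{p_0}M_0$, vectors $v_t\in T^{(1,0)}_{p_t}M_t$ with $v_t\to v_0$; this is immediate at a CR point since $T^{(1,0)}$ is the kernel of a continuously varying surjective linear map (or, concretely, one can use the paper's explicit vector fields with $\rho^{(t)}$ in place of $\rho$), after which $v_t^*L_tv_t=0$ passes to the limit. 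With that sentence added, your proof is complete, including your correct separate treatment of the degenerate case $A+B\equiv 0$.
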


\begin{proof}
Write $M$ as
\begin{equation}
w = A(z,\bar{z}) + B(\bar{z},\bar{z}) + r(z,\bar{z}) ,
\end{equation}
where $r$ is $O(3)$.

Let $A$ be the matrix giving the quadratic form $A(z,\bar{z})$
as before.
The Levi-map is given by taking the $n\times n$ matrix
\begin{equation}
L = L(p) =
A
+
\begin{bmatrix}
\frac{\partial^2 r}{ \partial z_j \partial \bar{z}_k}
\end{bmatrix}_{j,k}
\end{equation}
and applying it to vectors in $\pi(T^{(1,0)} M)$, where
$\pi$ is the projection onto the $\{ w = 0 \}$ plane.
That is we parametrize $M$ by the $\{ w = 0 \}$ plane,
and work there as before.

Let
\begin{equation}
\begin{aligned}
& a_j = - \overline{A}_{z_j} - \overline{B}_{z_j} - \bar{r}_{z_j} , \\
& b = \overline{A}_{z_1} + \overline{B}_{z_1} + \bar{r}_{z_1} , \\
& c = a_j (A_{z_1} + B_{z_1} + r_{z_1}) + b (A_{z_j} + B_{z_j} + r_{z_j}) .
\end{aligned}
\end{equation}
Then for $j=2,\ldots,n$,
we write the $T^{(1,0)}$ vector fields as
\begin{equation}
X_j =
a_j
\frac{\partial}{\partial z_1}
+
b
\frac{\partial}{\partial z_j}
+
c
\frac{\partial}{\partial w} .
\end{equation}
Hence $a_j \frac{\partial}{\partial z_1} + b \frac{\partial}{\partial z_j}$
are the vector fields in $\pi(T^{(1,0)} M)$.

Notice that $a_j$, $b$, and $c$ vanish at the origin, and furthermore
that if we take the linear terms of $a_j$, $b$, and the quadratic terms
in $c$, that is
\begin{equation}
\begin{aligned}
& \widetilde{a}_j = - \overline{A}_{z_j} - \overline{B}_{z_j} , \\
& \widetilde{b} = \overline{A}_{z_1} + \overline{B}_{z_1} , \\
& \widetilde{c} = \widetilde{a}_j (A_{z_1} + B_{z_1}) + \widetilde{b} (A_{z_j} + B_{z_j}) ,
\end{aligned}
\end{equation}
then away from the CR singular set of the quadric
\begin{equation}
\widetilde{X}_j =
\widetilde{a}_j
\frac{\partial}{\partial z_1}
+
\widetilde{b}
\frac{\partial}{\partial z_j}
+
\widetilde{c}
\frac{\partial}{\partial w}
\end{equation}
span the $T^{(1,0)}$ vector fields on the quadric $w = A(z,\bar{z}) +
B(\bar{z},\bar{z})$.

Since $M$ is Levi-flat, then we have that
\begin{equation}
\pi_*(X_j)^* ~L~ \pi_*(X_j) = 0 .
\end{equation}
The terms linear in $z$ and $\bar{z}$ respectively in
the expression $\pi_*(X_j)^* ~L~ \pi_*(X_j)$
are precisely
\begin{equation}
\pi_*(\widetilde{X}_j)^* ~A~ \pi_*(\widetilde{X}_j) .
\end{equation}
As this expression is identically zero,
the quadric $w = A(z,\bar{z}) + B(\bar{z},\bar{z})$ is
Levi-flat.
\end{proof}


\section{Quadratic Levi-flat submanifolds and their Segre varieties}
\label{sec:quadsegre}


A very useful invariant in CR geometry is the Segre variety.  Suppose that
a real-analytic variety $X \subset \C^N$ is defined by
\begin{equation}
\rho(z,\bar{z}) = 0 ,
\end{equation}
where $\rho$ is a real-analytic real vector-valued with $p \in X$.
Suppose that $\rho$ converges on some polydisc $\Delta$ centered at
$p$.  We 
complexify and treat $z$ and $\bar{z}$ as independent variables, and
the power series of $\rho$ at $(p,\bar{p})$ converges on $\Delta \times
\Delta$.
The Segre variety at $p$ is then defined as the variety
\begin{equation}
Q_p = \{ z \in \Delta : \rho(z,\bar{p}) = 0 \} .
\end{equation}
Of course the
variety depends on the defining equation itself and the polydisc $\Delta$.
For $\rho$ it is useful to take the defining equation or equations that generate
the ideal of the complexified $X$ in $\C^N \times \C^N$ at $p$.  If $\rho$
is polynomial we take $\Delta = \C^N$.

It is well-known that any
irreducible
complex variety that lies in $X$ and goes through
the point $p$ also lies in $Q_p$.  In case of Levi-flat submanifolds
we generally get equality as germs.  For example, for the CR Levi-flat
submanifold $M$ given by
\begin{equation}
\Im z_1 = 0, \qquad \Im z_2 = 0 ,
\end{equation}
the Segre variety $Q_0$ through the origin is precisely $\{ z_1 = z_2 =
0\}$, which happens to be the unique complex variety in $M$ through the origin.

Let us take the Levi-flat quadric
\begin{equation}
w = A(z,\bar{z}) + B(\bar{z},\bar{z}) .
\end{equation}
As we want to take the generating equations in the complexified space we
also need the conjugate
\begin{equation}
\bar{w} = \bar{A}(\bar{z},z) + \bar{B}(z,z) .
\end{equation}
The Segre variety is
then given by
\begin{equation}
w = 0, \qquad \bar{B}(z,z) = 0 .
\end{equation}

Through any CR singular point of a
real-analytic Levi-flat $M$ there is a complex variety of dimension $n-1$
that is the limit of the leaves of the Levi-foliation of $M_{CR}$,
via Lemma~\ref{lemma:dflimit}.
Let us take all possible such limits, and call their union $Q'_p$.
Notice that there could be other complex varieties in $M$ through $p$
of dimension $n-1$.  Note that $Q'_p \subset Q_p$.

Let us write down and classify the Segre varieties for all the quadric Levi-flat
submanifolds in $\C^{n+1}$:

\medskip

\begin{center}
\begin{tabular}{l|l|l|l|l|l}
Type &
Segre variety $Q_0$ &
$Q_0$ singular? &
$\dim_\C Q_0$ & $Q_0 \subset M$? &
$Q'_0$ \\
\hline
A.1 &
$w = 0$, $z_1^2 = 0$ &
no &
$n-1$ &
yes &
$Q_0$
\\
A.$k$ &
$w = 0$, $z_1^2 + \cdots + z_k^2 = 0$ &
yes &
$n-1$ &
yes &
$Q_0$
\\
B.0 &
$w = 0$ &
no &
$n$ &
no &
$w=0$, $z_1 = 0$
\\
B.$\gamma$, $\gamma > 0$ &
$w = 0$, $z_1^2 = 0$ &
no &
$n-1$ &
yes &
$Q_0$
\\
C.0 &
$w = 0$ &
no &
$n$ &
no &
$w=0$, $z_1 = 0$
\\
C.1 &
$w = 0$, $z_1^2 = 0$ &
no &
$n-1$ &
yes &
$Q_0$
\end{tabular}
\end{center}

\medskip

The submanifold C.0 also contains the complex variety $\{ w = 0, z_2 = 0 \}$,
but this variety is transversal to the leaves of the foliation, and so
cannot be in $Q'_0$

Notice that in the cases A.$k$ for all $k$, B.$\gamma$ for $\gamma > 0$, and
C.1, the variety $Q_0$ actually gives the complex variety $Q'_0$
contained in $M$ through the origin.  In these cases, the variety is
nonsingular only in the set theoretic sense.  Scheme-theoretically the
variety is always at least a double line or double hyperplane in general.


\section{The CR singularity of Levi-flats quadrics}
\label{sec:sizeofsing}

Let us study the set of CR singularities for Levi-flat quadrics.
The following proposition is well-known.

\begin{prop} \label{prop:crsingset}
Let $M \subset \C^{n+1}$ be given by
\begin{equation}
w = \rho(z,\bar{z})
\end{equation}
where $\rho$ is $O(2)$, and $M$ is not a complex submanifold.
Then the set $S$ of CR singularities of $M$
is given by
\begin{equation}
S = \{ (z,w) : \bar{\partial} \rho = 0, w = \rho(z,\bar{z}) \} .
\end{equation}
\end{prop}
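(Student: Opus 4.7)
The plan is to compute the CR tangent space directly from the defining equation and identify exactly when its dimension jumps.

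First, I would write $M$ as the common zero set of the complex defining function $F(z,w,\bar z,\bar w) = w - \rho(z,\bar z)$ and its conjugate $\bar F = \bar w - \bar\rho(\bar z, z)$. Since $dF$ and $d\bar F$ are everywhere $\C$-linearly independent (thanks to the $dw$ and $d\bar w$ terms), $M$ is a smooth real submanifold of real codimension 2. The complex tangent space $T_p^c M \otimes \C$ decomposes as $T_p^{1,0} M \oplus T_p^{0,1} M$, and $T_p^{1,0} M$ is cut out from $T_p^{1,0}\C^{n+1}$ by the vanishing of $dF$ and $d\bar F$ on $(1,0)$-vectors.

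Next, write a general $(1,0)$-vector at $p = (z,w) \in M$ as $V = \sum_{j=1}^n a_j \frac{\partial}{\partial z_j} + b \frac{\partial}{\partial w}$. Applying $dF$ gives $V(F) = b - \sum_j a_j \rho_{z_j}(z,\bar z) = 0$, which simply determines $b$ in terms of $a = (a_1,\dots,a_n)$. Applying $d\bar F$ gives $V(\bar F) = -\sum_j a_j \overline{\rho_{\bar z_j}(z,\bar z)} = 0$, a single complex linear equation on $a$. Consequently
\begin{equation}
\dim_\C T_p^{1,0} M =
\begin{cases}
n & \text{if } \bar\partial\rho(z,\bar z) = 0, \\
n-1 & \text{otherwise.}
\end{cases}
\end{equation}

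Finally, since $M$ is assumed not to be a complex submanifold, $\bar\partial\rho$ does not vanish identically on $M$; by real-analyticity the set $\{\bar\partial\rho = 0\}$ has empty interior in $M$, so the generic CR dimension is $n-1$. The CR singular points are precisely those where this dimension jumps, i.e.\ where $\bar\partial\rho(z,\bar z) = 0$, subject of course to the on-$M$ constraint $w = \rho(z,\bar z)$. This yields the stated formula for $S$. There is no real obstacle here beyond careful bookkeeping of conjugates in the computation of $V(\bar F)$; the argument is a routine application of the definition of CR singularity once the defining equations and their conjugates are written down.
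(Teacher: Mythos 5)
Your proof is correct and takes essentially the same route as the paper: both reduce the question to the pair of defining equations $w-\rho$ and $\bar w-\bar\rho$, observe that the conjugated equation always contributes the $\bar w$-direction (in your formulation, $V(F)=0$ merely solves for $b$, while in the paper's, $\bar\partial(\bar w - \bar\rho)$ always contains $d\bar w$), so that degeneracy of the CR structure occurs exactly where $\bar\partial\rho = 0$. The one step you make explicit that the paper leaves implicit in its appeal to the generic/complex/CR-singular trichotomy --- that $\{\bar\partial\rho=0\}$ has empty interior because $M$ is real-analytic and not complex, so these are genuine jump points of $\dim_\C T^{1,0}_pM$ rather than an open complex piece --- is a correct and worthwhile bit of bookkeeping.
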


\begin{proof}
In codimension 2, a real submanifold is either CR singular, complex, or
generic.
A submanifold is generic if
$\bar{\partial}$ of all the defining equations are pointwise linearly independent
(see \cite{BER:book}).
As $M$ is not complex, to find the set of CR singularities,
we find the set of points where
$M$ is not generic.  We need both defining equations for $M$,
\begin{equation}
w = \rho(z,\bar{z}), \qquad \text{and} \qquad \bar{w} = \rho(z,\bar{z}) .
\end{equation}
As
the second equation always produces a $d\bar{w}$ while the first
does not, the only way that the two can be linearly dependent is for the
$\bar{\partial}$ of the first equation to be zero.  In other words
$\bar{\partial} \rho = 0$.
\end{proof}

Let us compute and classify the CR singular sets for the 
CR singular Levi-flat quadrics.

\medskip

\begin{center}
\begin{tabular}{l|l|l|l}
Type &
CR singularity $S$ &
$\dim_\R S$ &
CR structure of $S$ \\
\hline
A.$k$ &
$z_1 = 0$, \ldots, $z_k = 0$, $w = 0$ &
$2n-2k$ &
complex
\\
B.0 &
$z_1 = 0$, $w = 0$ &
$2n-2$ &
complex
\\
B.$\frac{1}{2}$ &
$z_1 + \bar{z}_1 = 0$, $w = 0$
&
$2n-1$ &
Levi-flat
\\
B.$\gamma$, $\gamma > 0$, $\gamma \not= \frac{1}{2}$ &
$z_1 = 0$, $w=0$ &
$2n-2$ &
complex
\\
C.0 &
$z_2 = 0$, $w = 0$ &
$2n-2$ &
complex
\\
C.1 &
$z_2+2\bar{z}_1 = 0$,
$w = \frac{-z_2^2}{4}$ &
$2n-2$ &
Levi-flat
\end{tabular}
\end{center}

\medskip

By Levi-flat we mean that $S$ is a Levi-flat CR submanifold in $\{ w = 0 \}$.
There is a conjecture that a real subvariety that is Levi-flat at
CR points has a stratification by Levi-flat CR submanifolds.
This computation gives further evidence of this conjecture.


\section{Levi-foliations and images of generic Levi-flats}
\label{sec:foliation}

A CR Levi-flat submanifold $M \subset \C^n$
of codimension 2 has a certain canonical
foliation defined on it with complex analytic leaves of real codimension 2
in $M$.
The submanifold $M$ is locally
equivalent to $\R^2 \times \C^{n-2}$, defined by
\begin{equation}
\Im z_1 = 0, \qquad \Im z_2 = 0 .
\end{equation}
The leaves of the foliation are the submanifolds given by fixing
$z_1$ and $z_2$ at a real constant.
By foliation we always mean the standard nonsingular foliation as
locally comes up in the implicit function theorem.
This foliation on $M$ is called the
\emph{Levi-foliation}.  It is obvious that the Levi-foliation on $M$
extends to a neighbourhood of $M$ as a nonsingular holomorphic foliation.
The same is not true in general for CR singular submanifolds. 
We say that a smooth holomorphic
foliation $\sL$ defined in a neighborhood of $M$
is an \emph{extension} of the Levi-foliation of $M_{CR}$,
if $\sL$ and the Levi-foliation have the same germs of leaves at each CR point of $M$.
We also say that a smooth real-analytic foliation $\widetilde{\sL}$
on $M$ is an extension of the Levi-foliation on $M_{CR}$
if $\widetilde{\sL}$ and the Levi-foliation have the same germs of leaves at each CR point of $M$.
In our situation (real-analytic),
$M_{CR}$ is a dense and open subset of $M$.
This implies that
the leaves of $\sL$ and $\widetilde{\sL}$ through a CR singular point
are complex analytic submanifolds contained in $M$.
The latter could lead to an obvious obstruction to extension.
First let us see
what happens if the foliation of $M_{CR}$
is the restriction of a nonsingular holomorphic foliation of a whole
neighbourhood of $M$.

The Bishop-like quadrics, that is
A.1 and B.$\gamma$ in $\C^{n+1}$, have a Levi-foliation that extends as
a holomorphic foliation to all of $\C^{n+1}$.
That is because these submanifolds are of the form
\begin{equation} \label{eq:ntimescnm1}
N \times \C^{n-1} .
\end{equation}
For submanifolds of the form \eqref{eq:ntimescnm1} we can find normal
forms using the well-developed
theory of Bishop surfaces in $\C^2$.

\begin{prop} \label{prop:flatfolmflds}
Suppose $M \subset \C^{n+1}$ is a 
real-analytic Levi-flat CR singular submanifold
where the Levi-foliation on $M_{CR}$ extends near $p \in M$ to a
nonsingular holomorphic foliation of a neighbourhood of $p$ in $\C^{n+1}$.
Then at $p$, $M$ is locally biholomorphically equivalent to a submanifold of the form
\begin{equation} \label{eq:ntimesbishop}
N \times \C^{n-1}
\end{equation}
where $N \subset \C^2$ is a CR singular submanifold of real dimension 2.
Therefore if $M$ has a nondegenerate complex tangent, then it is
Bishop-like, that is of type A.1 or B.$\gamma$.

Furthermore, two submanifolds of the form \eqref{eq:ntimesbishop} are
locally biholomorphically (resp.\ formally) equivalent if and only if the corresponding $N$s are
locally biholomorphically  (resp.\ formally) equivalent in $\C^2$.
\end{prop}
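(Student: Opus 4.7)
My plan is to straighten the extending holomorphic foliation via the holomorphic Frobenius theorem so that $M$ becomes a product, read off the type from the classification, and then reduce biholomorphic (and formal) equivalences to equivalences of the base factor $N$.

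First, I apply the holomorphic Frobenius theorem to pick local coordinates $(z_1, z_2, \ldots, z_n, w)$ around $p$ in which the given extension $\sL$ of the Levi-foliation is the affine foliation $\{(z_1, w) = \text{const}\}$. At every $q \in M_{CR}$ the Levi-leaf of $M$ through $q$ and the leaf of $\sL$ through $q$ share the same tangent space (both equal $T_q^{(1,0)} M$) and the same complex dimension $n-1$, so they coincide. Hence $M_{CR}$ is saturated under $\sL$, and by density of $M_{CR}$ in $M$, so is $M$: namely $M = \pi^{-1}(N)$ for $\pi(z,w) = (z_1, w)$ and $N = \pi(M)$. The fibers of $\pi|_M$ have real dimension $2n-2$, forcing $N$ to be a real surface in $\C^2$, and the CR singularity at $0 \in M$ descends to one at $0 \in N$ (slicing by $\{z_2 = \cdots = z_n = 0\}$ recovers $N$ and preserves CR singularity). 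So $M = N \times \C^{n-1}$ as germs, proving (i). For the Bishop-like conclusion: $N$ is defined by $w = \rho(z_1, \bar z_1)$ with $\rho$ carrying a nonzero quadratic part, so the quadratic data $A + B$ of $M$ involves only $z_1, \bar z_1$, i.e., both $A$ and $B$ have nonzero entries only in position $(1,1)$. Inspecting the quadric classification Lemma, the only matching nondegenerate quadrics are types A.1 and B.$\gamma$.

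For the equivalence statement, let $\Phi$ be a local biholomorphism with $\Phi(M) = M'$, where $M = N \times \C^{n-1}$ and $M' = N' \times \C^{n-1}$. Since the Levi-foliation of $M_{CR}$ is intrinsic, $\Phi$ carries it to the Levi-foliation of $M'_{CR}$. The holomorphic extension of the Levi-foliation is unique: at each CR point $M$ is CR-generic (the real codimension $2$ equals the holomorphic codimension of $T^{(1,0)} M$ in $T^{(1,0)} \C^{n+1}$), so any holomorphic $(n-1)$-plane distribution is determined by its restriction to $M_{CR}$. Hence $\Phi$ maps the standard hyperplane foliation $\{(z_1, w) = \text{const}\}$ of a neighborhood of $M$ to that of $M'$, so its first and last components $F_1, G$ depend only on $(z_1, w)$, and $(F_1, G)$ is the sought germ $\phi : (\C^2, 0) \to (\C^2, 0)$ with $\phi(N) = N'$. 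The converse is immediate by taking products with the identity on $\C^{n-1}$.

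The formal case runs in parallel: a formal biholomorphism $\hat\Phi$ preserves the formal complex tangent distribution of $M$ and hence the formal Levi-leaves $\{z_1 = c\} \cap M$; this forces the $(z_1, w)$-components of $\hat\Phi$ to be independent of $z_2, \ldots, z_n$ modulo the ideal of $M$, producing a formal biholomorphism $\hat\phi : (\C^2, 0) \to (\C^2, 0)$ with $\hat\phi(N) = N'$. I expect the main obstacle to be justifying the uniqueness of the holomorphic extension of the Levi-foliation (the CR-generic uniqueness principle applied to a holomorphic tangent distribution on a neighborhood of $M$) and formulating its formal counterpart cleanly; once this uniqueness is in hand, the descent to a biholomorphism of $\C^2$ is automatic.
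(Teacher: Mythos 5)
Your proof of the product decomposition is essentially the paper's: flatten the extending foliation by the holomorphic Frobenius theorem, observe that at CR points the Levi-leaves agree with the straightened leaves $\{q\} \times \C^{n-1}$, and use density of $M_{CR}$ (plus closedness of $M$ in the polydisc) to conclude that $M$ is a union of such leaves, hence of the form $N \times \C^{n-1}$; the identification of the nondegenerate types as A.1 or B.$\gamma$ is the same normalization the paper invokes. One small imprecision: tangency of the Levi-leaf and the $\sL$-leaf at the single point $q$ does not by itself force the leaves to coincide; either invoke the paper's definition of extension directly (same germs of leaves at each CR point) or note that the Levi-leaf is an integral manifold of the distribution of $\sL$ at \emph{all} of its points, hence sits inside the $\sL$-leaf. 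Where you add genuine content is the ``furthermore'' part, which the paper dismisses as straightforward: your argument in the biholomorphic case is correct and attractive --- uniqueness of the holomorphic extension follows since $M_{CR}$ is generic at its CR points, so a holomorphic distribution agreeing with $T^{(1,0)}M$ there is determined near those points, and the identity theorem carries the agreement across the CR singularity; then $\Phi$ preserves the straightened foliation, $(F_1,G)$ descend to the leaf space, and the block-triangular Jacobian gives invertibility of the induced germ $\phi$ of $(\C^2,0)$ with $\phi(N)=N'$.

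The formal case is the weak point of your write-up: ``preserves the formal Levi-leaves'' is not a meaningful statement for a formal map, and you never address invertibility of the induced $\hat\phi$. A cleaner route avoiding foliations entirely: restrict the formal identity expressing $\hat\Phi(M)=M'$ to $z_2=\cdots=z_n=0$ (the defining equations of both sides involve only $(z_1,w)$ and their conjugates), which immediately produces $\hat\phi(z_1,w)=\bigl(F_1(z_1,0,\ldots,0,w),\,G(z_1,0,\ldots,0,w)\bigr)$ with $\hat\phi(N)\subset N'$ formally. Invertibility of its linear part then needs an argument: in the nondegenerate case it follows from Proposition~\ref{prop:astensors}, since $A$ and $B$ supported in the $(1,1)$ entry on both sides force the first row of $T_1$ to be $(a_1,0,\ldots,0)$ with $a_1\neq 0$, while the linear part of $G$ is $\lambda^{-1}w$; in the degenerate case one argues similarly with the lowest-order homogeneous part of the defining function. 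Since the paper itself supplies no details here, this is a gap relative to a complete proof rather than relative to the paper, but it is worth repairing along the lines above.
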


\begin{proof}
We flatten the holomorphic foliation near $p$ so that
in some polydisc $\Delta$,
the leaves of the foliation
are given by $\{ q \} \times \C^{n-1} \cap \Delta$ for $q \in \C^2$.  Let
us suppose that $M$ is closed in $\Delta$.
At any
CR point of $M$, the leaf of the Levi-foliation agrees with
the leaf of the holomorphic foliation and therefore the leaf
that lies in $M$ agrees with a leaf of the form
$\{ q \} \times \C^{n-1}$ as a germ and so
$\{ q \} \times \C^{n-1} \cap \Delta \subset M$.
As $M_{CR}$ is dense in $M$, then $M$ is a union of
sets of the form $\{ q \} \times \C^{n-1} \cap \Delta$ and the first part
follows.

It is classical that
every Bishop surface (2 dimensional real
submanifold of $\C^2$ with a nondegenerate complex tangent)
is equivalent
to a submanifold whose quadratic part is of the form A.1 or B.$\gamma$.

Finally, the proof that two submanifolds of the form \eqref{eq:ntimesbishop} are
equivalent if and only if the $N$s are equivalent is straightforward.
\end{proof}


Not every Bishop-like submanifold is a cross product
as above.  In fact the Bishop invariant may well change from point to point.
See \S~\ref{sec:bishopexamples}.
In such cases the foliation does not extend to a nonsingular
holomorphic foliation of a neighbourhood.

Let us now focus on extending the Levi-foliation to $M$, and not to
a neighbourhood of $M$.
Let us prove a useful proposition about recognizing certain
CR singular Levi-flats from the form of the defining equation.  That is
if the $r$ in the equation does not depend on $\bar{z}_2$ through
$\bar{z}_n$.


\begin{prop} \label{prop:imagelf}
Suppose near the origin $M \subset \C^{n+1}$ is given by
\begin{equation}
w = r(z_1,\bar{z}_1,z_2,z_3,\ldots,z_n) ,
\end{equation}
where $r$ is $O(2)$ and $\frac{\partial r}{\bar{z}_1} \not\equiv 0$.
Then $M$ is a CR singular Levi-flat submanifold and the Levi-foliation
of $M_{CR}$ extends through the origin to a real-analytic
foliation on $M$.  Furthermore, there
exists a real-analytic CR mapping $F \colon U \subset \R^2 \times \C^{n-1}
\to \C^{n+1}$, $F(0) = 0$, which is a diffeomorphism onto its image $F(U)
\subset M$.
\end{prop}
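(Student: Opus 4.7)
The plan is to construct the parametrization $F$ explicitly and deduce the remaining claims from its properties. Define
\begin{equation}
F(z_1,z_2,\ldots,z_n) = \bigl(z_1,z_2,\ldots,z_n,r(z_1,\bar{z}_1,z_2,\ldots,z_n)\bigr),
\end{equation}
where the source is viewed as an open neighborhood $U$ of $0$ in $\R^2\times\C^{n-1}$ by identifying the $z_1$-line with $\R^2$. Since $M$ is the graph over the base $\C^n$ of $\rho(z,\bar{z})=r(z_1,\bar{z}_1,z_2,\ldots,z_n)$, the map $F$ is tautologically a real-analytic diffeomorphism of $U$ onto $M\cap U'$ for some neighborhood $U'$ of the origin.

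Next, I would verify that $F$ is a CR map with respect to the CR structure the source inherits from $\C^2\times\C^{n-1}$. The $T^{(0,1)}$ bundle of the source is spanned by $\partial/\partial\bar{z}_j$ for $j=2,\ldots,n$, since the $\R^2$ factor is totally real. For each such $j$, the first $n$ components of $F$ are holomorphic in $z_j$ and, by hypothesis, so is $r$; hence $\partial F/\partial\bar{z}_j\equiv 0$, which is exactly the CR condition. Proposition~\ref{prop:crsingset} then locates the CR singular set of $M$ as $S=\{\partial r/\partial\bar{z}_1=0,\ w=r\}$: since $r$ is $O(2)$ the origin lies in $S$, and since $\partial r/\partial\bar{z}_1\not\equiv 0$ the set $M_{CR}$ is open and dense in $M$.

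For Levi-flatness and the extension of the foliation, I would use the natural foliation of the source by the complex $(n-1)$-dimensional slices $\{z_1=c\}$. For each fixed $c$ the restriction of $F$ to such a slice is the holomorphic parametrization
\begin{equation}
(z_2,\ldots,z_n)\mapsto\bigl(c,z_2,\ldots,z_n,r(c,\bar{c},z_2,\ldots,z_n)\bigr),
\end{equation}
so its image is a complex $(n-1)$-dimensional submanifold of $\C^{n+1}$ contained in $M$. Pushing the source foliation forward via $F$ therefore yields a real-analytic foliation of $M$ by complex $(n-1)$-dimensional submanifolds. At any CR point of $M$ the CR dimension equals $n-1$, so this complex leaf through the point must agree with the leaf of the Levi-foliation there; this shows both that $M_{CR}$ is Levi-flat and that the pushed-forward foliation is the desired extension across the CR singular locus. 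I do not expect any substantive obstacle: everything reduces to the observation that $r$ is antiholomorphic only in $z_1$, and care with the CR structure of the totally real factor $\R^2$ takes care of itself.
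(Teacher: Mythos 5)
Your proposal is correct and follows essentially the same route as the paper: you construct the identical parametrization $F(z_1,\xi)=\bigl(z_1,\xi,r(z_1,\bar{z}_1,\xi)\bigr)$ (the paper writes it as $(x,y,\xi)\mapsto(x+iy,\xi,r(x+iy,x-iy,\xi))$), check it is CR since all components are annihilated by $\partial/\partial\bar{\xi}_j$, and push the slice foliation $\{z_1=\mathrm{const}\}$ forward to extend the Levi-foliation. The only difference is that you spell out the Levi-flatness verification via the explicit complex $(n-1)$-dimensional leaves and the dimension count $\dim_{\C} T^c_pM=n-1$, where the paper simply invokes that a CR diffeomorphism onto $M_{CR}$ forces Levi-flatness; this is an elaboration, not a different argument.
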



Near 0, $M$ is the image of a CR mapping that is a diffeomorphism
onto its image of the standard CR Levi-flat.  The proposition 
also holds in two dimensions ($n=1$), although in this case it is somewhat
trivial.

\begin{proof}
As in \cite{LMSSZ},
let us define the mapping $F$ by
\begin{equation}
(x,y,\xi) \mapsto
\bigl(x+iy, \quad \xi, \quad r(x+iy,x-iy, \xi ) \bigr) ,
\end{equation}
where  $\xi = (\xi_2,\ldots,\xi_n) \in \C^{n-1}$.
Near points where $M$ is CR, this mapping is a CR diffeomorphism and hence
$M$ must be Levi-flat.  Furthermore, since $F$ is a diffeomorphism, it takes
the Levi-foliation on $\R^2 \times \C^{n-1}$ to a foliation on $M$ near 0.
\end{proof}

In fact, we make the following conclusion.

\begin{lemma} \label{lemma:folisabs}
Let $M \subset \C^{n+1}$ be a CR singular real-analytic Levi-flat
submanifold of codimension 2 through the origin.

Then $M$ is a CR singular Levi-flat submanifold whose Levi-foliation
of $M_{CR}$ extends through the origin to a nonsingular real-analytic
foliation on $M$ if and only if there
exists a real-analytic CR mapping $F \colon U \subset \R^2 \times \C^{n-1}
\to \C^{n+1}$, $F(0) = 0$, which is a diffeomorphism onto its image $F(U)
\subset M$.
\end{lemma}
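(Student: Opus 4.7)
My plan has two directions. For the direction ``$F$ exists $\Longrightarrow$ the foliation extends,'' I would push the standard Levi-foliation of $\R^2\times\C^{n-1}$ (with leaves $\{(x_0,y_0)\}\times\C^{n-1}$) forward via $F$. Because $F$ is holomorphic in the $\C^{n-1}$ variables, each pushed-forward leaf is a complex $(n-1)$-dimensional submanifold of $\C^{n+1}$ contained in $M$; at each CR point $p\in F(U)$ this leaf's tangent complex $(n-1)$-plane must coincide with $T^c_p M$. Hence the pushed-forward foliation is a real-analytic extension of the Levi-foliation. This is essentially the argument of Proposition~\ref{prop:imagelf}, run through $F$.

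For the direction ``the foliation extends $\Longrightarrow$ $F$ exists,'' let $\widetilde{\sL}$ be the given extension; by hypothesis its leaves are complex analytic submanifolds of $\C^{n+1}$ of complex dimension $n-1$. Writing $M$ as $w=\rho(z,\bar z)$ with $\rho=O(2)$, the tangent space $T_0M=\{w=0\}$ is complex $n$-dimensional, and $T_0 L_0$ (where $L_0$ is the leaf through $0$) is a complex hyperplane in it. After a $\C$-linear change of the $z$-coordinates I may assume $T_0 L_0 = \{z_1=0,\ w=0\}$. Real-analytic Frobenius applied to $\widetilde{\sL}$ produces real-analytic real-valued functions $s_1,s_2$ on a neighborhood of $0$ in $M$ whose common level sets are the leaves, with $ds_1,ds_2$ linearly independent at $0$. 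I would then define
\begin{equation}
\Phi\colon M \to \R^2\times\C^{n-1},\qquad p \mapsto \bigl(s_1(p),\,s_2(p),\,z_2(p),\ldots,z_n(p)\bigr).
\end{equation}
Its differential at $0$ is bijective because $\ker d(s_1,s_2)_0 = T_0 L_0$ and $(z_2,\ldots,z_n)$ restricts to a $\C$-linear isomorphism on $T_0 L_0$; hence $\Phi$ is a local real-analytic diffeomorphism onto a neighborhood $U$ of $0$ in $\R^2\times\C^{n-1}$, and I set $F:=\Phi^{-1}$.

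The main obstacle is to verify that $F$ is CR. Writing $F(s,\zeta)=\bigl(F_1(s,\zeta),\,\zeta,\,F_w(s,\zeta)\bigr)$, for each fixed $s$ the map $F(s,\cdot)$ parameterizes the leaf $L_s:=\Phi^{-1}(\{s\}\times\C^{n-1})$. Let $\pi\colon \C^{n+1}\to\C^{n-1}$, $(z,w)\mapsto(z_2,\ldots,z_n)$, be the ambient holomorphic projection. Its restriction to the complex subspace $T_0 L_0$ is a $\C$-linear isomorphism, so $\pi|_{L_0}$ is a local biholomorphism near $0$; since the tangent spaces to the leaves of $\widetilde{\sL}$ vary real-analytically with the base point, the map $\pi|_{L_s}$ remains a local biholomorphism near the origin for all sufficiently small $s$. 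Since the middle component of $F(s,\zeta)$ equals $\zeta$ by construction, $F(s,\cdot)$ is precisely the inverse of $\pi|_{L_s}$, hence holomorphic in $\zeta$. This step uses the hypothesis essentially, since it relies on each leaf of $\widetilde{\sL}$ being a complex submanifold of $\C^{n+1}$, so that $\pi|_{L_s}$ is holomorphic. I note that the argument does not upgrade $\widetilde{\sL}$ to a holomorphic foliation of an ambient neighborhood of $M$, which by Proposition~\ref{prop:flatfolmflds} would be strictly stronger and would force $M$ to be Bishop-like.
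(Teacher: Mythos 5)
Your proof is correct, but in the hard direction it takes a genuinely different route from the paper. The paper never constructs $F$ by hand: it extends the bundle $T^c M_{CR}$ across the CR singularity by taking $\sW$ to be the tangent bundle of the extended foliation, observes that $J\sW = \sW$ holds on all of $M$ by density of $M_{CR}$, defines $\sV_p = \{X + iJ(X) : X \in \sW_p\}$, checks integrability, and thereby endows $M$ with the structure of an abstract real-analytic Levi-flat CR manifold; the map $F$ then comes from the (nontrivial) embeddability of real-analytic abstract CR structures, under which a real-analytic Levi-flat structure is CR-diffeomorphic to $\R^2 \times \C^{n-1}$. You instead build $F$ explicitly from Frobenius first integrals $(s_1,s_2)$, the transverse coordinates $(z_2,\ldots,z_n)$, and leafwise inversion of the projection $\pi$; holomorphy of $F(s,\cdot)$ follows since its differential at each point is the inverse of the complex-linear map $d\pi|_{T_pL_s}$, so $F(s,\cdot)$ is holomorphic and the jointly real-analytic $F$ is CR. Your approach is more elementary and constructive --- it avoids the embeddability machinery entirely and yields the extra normalization $F(s,\zeta) = (F_1,\zeta,F_w)$ --- while the paper's approach is softer and shorter once the embeddability theorem is granted, and does not require writing $M$ as a graph or choosing adapted linear coordinates. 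One small inaccuracy to fix: that \emph{all} leaves of $\widetilde{\sL}$, including those through CR singular points, are complex submanifolds is not literally ``by hypothesis''; it requires exactly the density argument the paper records just before the lemma (and which its own proof uses intrinsically): $J\sW_p = \sW_p$ at CR points, hence everywhere by continuity, and a real-analytic submanifold whose tangent spaces are $J$-invariant is a complex submanifold. With that justification supplied, your argument is complete; your easy direction coincides with the paper's, being the content of Proposition~\ref{prop:imagelf}.
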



\begin{proof}
One direction is easy and was used above.  For the other direction, suppose
that we have a foliation extending the Levi-foliation through the origin.
Let us consider $M_{CR}$ an abstract CR manifold.  That is
a manifold $M_{CR}$ together with the bundle $T^{(0,1)} M_{CR}
\subset \C \otimes T M_{CR}$.  The extended
foliation on $M$ gives a real-analytic subbundle
$\sW \subset T M$.  Since we are extending the Levi-foliation, 
when $p \in M_{CR}$, then $\sW_p = T_p^c M$, where
$T_p^c M = J(T_p^c M)$ is the complex tangent space and
$J$ is the complex structure on $\C^{n+1}$.  Since $M_{CR}$ is
dense in $M$, then $J\sW=\sW$ on $M$.

Define the real-analytic subbundle $\sV \subset \C \otimes T M$ as
\begin{equation}
\sV_p = \{ X + iJ(X) : X \in \sW_p \} .
\end{equation}
At CR points $\sV_p = T_p^{(0,1)} M$ (see for example~\cite{BER:book} page
8).
Then we can find vector fields $X^1,\ldots,X^{n-1}$ in $\sW$
such that
\begin{equation}
X^1,J(X^1),
X^2,J(X^2),\ldots,X^{n-1},J(X^{n-1})
\end{equation}
is a basis of $\sW$ near the origin.  Then the basis for $\sV$ is
given by
\begin{equation}
X^1+iJ(X^1),
X^2+iJ(X^2),\ldots,X^{n-1}+iJ(X^{n-1}).
\end{equation}
As the subbundle is integrable, we obtain that $(M,\sV)$
gives an abstract CR
manifold, which at CR points agrees with $M_{CR}$.
This manifold is Levi-flat as it is Levi-flat on a dense open set.
As it is real-analytic it
is embeddable and hence there exists a real-analytic CR diffeomorphism
from a neighbourhood of $\R^2 \times \C^{n-1}$ to a neighbourhood of 0
in $M$ (as an abstract CR manifold).  This is our mapping $F$.
\end{proof}


The quadrics
A.$k$, $k \geq 2$, defined by
\begin{equation}
w = \bar{z}_1^2 + \cdots + \bar{z}_k^2 ,
\end{equation}
contain the singular variety defined by $w = 0$, $z_1^2 + \cdots + z_k^2 =
0$, and hence the Levi-foliation cannot extend to a nonsingular
foliation of the submanifold.
The quadric A.1 does admit a holomorphic foliation, but other type A.1
submanifolds do not in general.  For example,
the submanifold 
\begin{equation}
w = \bar{z}_1^2 + \bar{z}_2^3
\end{equation}
is of type A.1 and the unique complex variety through the origin
is $0 = z_1^2 + z_2^3$, which is singular.  Therefore the foliation
cannot extend to $M$.


\section{Extending the Levi-foliation of C.x type submanifolds}

Let us prove Theorem \ref{thm:folextendsCxtype}, that is, let us start with a
type C.0 or C.1 submanifold
and show that the Levi-foliation must extend real-analytically to all of $M$.
Equivalently, let us show that 
the real analytic bundle $T^{(1,0)}M_{CR}$
extends to a real analytic subbundle of $\C \otimes TM$.  Taking real
parts we obtain an involutive subbundle of $TM$ extending
$T^cM_{CR} = \Re(T^{(1,0)} M_{CR})$.

\begin{proof}[Proof of Theorem \ref{thm:folextendsCxtype}]
Let $M$ be the submanifold given by
\begin{equation}
w = \bar{z}_1 z_2 + \epsilon \bar{z}_1^2 + r(z,\bar{z})
\end{equation}
where $\epsilon = 0,1$.  Let us treat the $z$ variables as the parameters on
$M$.
Let $\pi$ be the projection onto the $\{ w=0 \}$ plane, which is tangent to $M$
at 0 as a real $2n$-dimensional hyperplane.  We will look at all the
vectorfields on this plane $\{ w=0 \}$.
All vectors in $\pi(T^{(1,0)} M)$ can be written
in terms of $\frac{\partial}{\partial z_j}$ for $j=1,\ldots,n$.

The Levi-map is given by taking the $n\times n$ matrix
\begin{equation}
L = L(p) =
\begin{bmatrix}
0 & 1 & 0 & \cdots & 0 \\
0 & 0 & 0 & \cdots & 0 \\
0 & 0 & 0 & \cdots & 0 \\
\vdots & \vdots & \vdots & \ddots & \vdots \\
0 & 0 & 0 & \cdots & 0
\end{bmatrix}
+
\begin{bmatrix}
\frac{\partial^2 r}{ \partial z_j \partial \bar{z}_k}
\end{bmatrix}_{j,k}(p)
\end{equation}
to vectors $v \in \pi(T^{(1,0)} M)$ ($\pi$ is the projection)
as $v^* L v$.  The excess term in $L$ vanishes at 0.


Notice that for $p \in M_{CR}$, $\pi(T^{(1,0)}_p M)$ is $n-1$ dimensional.
As $M$ is Levi-flat, then $v^* L v$ vanishes for $v \in \pi(T^{(1,0)}_p M)$.
Write the vector $v = (v_1,\ldots,v_n)^t$.
The zero set of the function 
\begin{equation}
(z,v) \in \C^n \times \C^n \overset{\varphi}{\mapsto} v^* L(z,\bar{z}) v
\end{equation}
is a variety $V$ of
real codimension 2 at the origin of $\mathbf C^n\times\mathbf C^n$
because of the form of $L$.  That is, at $z=0$,
the only vectors $v$ such that $v^*Lv = 0$ are those where $v_1 = 0$ or
$v_2 = 0$.  So the codimension is at least 2.  
And we know that
$v^*Lv$ vanishes for vectors in $\pi(T^{(1,0)}_p M)$ for $p \in M$ near 0,
which is real codimension 2 at each $z$ corresponding to a CR point.
Therefore, $V \cap (\pi(M_{CR}) \times \C^n )$ has
a connected component that is equal to a connected component of the real-analytic subbundle
$\pi(T^{(1,0)} M_{CR})$. We will verify that the latter is connected.


We show below that this subbundle extends past the CR singularity.  
The key point is to show that 
the restriction of $\pi\bigl(T^{(1,0)}(M_{CR})\bigr)$ extends to a
smooth real-analytic submanifold of
$T^{(1,0)} \C^n$.
Write
\begin{equation}\label{eq:vzv}
\varphi(z,v) = v_1\bar{v}_2 + \sum a_{jk}(z) v_j\bar{v}_k
\end{equation}
where $a_{jk}(0) = 0$.  

By Proposition~\ref{prop:crsingset}, $\pi(M\setminus M_{CR})$ is contained in
\begin{equation}
z_2+2\epsilon\overline z_1+r_{\overline z_1}=0.
\end{equation}
Thus $M_{CR}$ is connected. Assume that $v\cdot\frac{\partial}{\partial z}\in T^{(1,0)}_pM$ at a CR point $p$. Then
\begin{equation}
(z_2+2\epsilon\overline z_1+r_{\overline z_1})\overline v_1+\sum_{j>1} r_{\overline z_j}\overline v_j=0.
\end{equation}
When $p$ is in the open set $U_\delta\subset\pi(M_{CR})$
defined by  $\abs{z_2+2\epsilon \overline z_1}>\abs{z}/2$ and $0<\abs{z}<\delta$, $v$ is contained in
\begin{equation}
V_C\colon \abs{v_1}\leq \abs{v}/C.
\end{equation}
When $\delta$ is
sufficiently small,  $\varphi(z,v)=0$ admits a unique solution 
\begin{equation}
v_1=f(z,v_3,\dots, v_n), \quad v_2=1
\end{equation}
by imposing $v\in V_C$. Note that 
$f$ is given by convergent power series. For $\abs{z}<\delta$, define 
\begin{equation}
w_j=\bigl(w_{j1}(z),\dots, w_{jn}(z)\bigr)\in V_C, \quad j=2,\dots, n
\end{equation}
such that $\varphi(z,w_j(z))=0$ and
\begin{equation} \label{eq:condonwj}
w_{j2}=1, \quad w_{jk}=\delta_{jk},\quad j\geq 2, k>2.
\end{equation}
To see why we can do so, fix $p\in U_\delta$.  First we can find a vector
$w_2$ in $E_p=\pi(T_p^{(1,0)}M_{CR})$ such that $v_2=1$. Otherwise,
$E_p\subset V_C$ cannnot have dimension $n-1$.  Let $E_p'$ be the vector
subspace of $E_p$ with $v_2=0$. Then $E_p'$ has rank $n-2$ and remains in
the cone $V_C$. Then $E_p'$ has an element $w_2$ with $v_2$ component being
$1$. Repeating this, we find $w_2,\dots, w_n$ in $E_p$ such that the $v_j$
component of $w_i$ is $0$ for $2<j<i$. Using linear combinations, we find a
unique basis $\{ w_2,\dots, w_n \}$ of $E_p$ that satisfies condition
\eqref{eq:condonwj}.

Assume that $C$ is sufficiently large. 
By the above uniqueness assertion on $\varphi(z,v)=0$, 
we conclude that
when $p\in U_\delta$,
 $\{w_{2}(p),\dots, w_n(p)\}$ is a base of $\pi (T^{(1,0)}_pM_{CR})$. Also it is 
 real analytic at $p=0$.  Define
\begin{equation}
\omega_j(z)=w_{j}(z)\cdot\frac{\partial}{\partial z}, \quad \abs{z}<\delta.
\end{equation}
We lift the functions $\omega_j$ via $\pi$ to a subbundle of
$\C \otimes TM$, let us call these $\widetilde{\omega}_j$.
Then consider
the vector fields
$w^*_j = 2 \Re \widetilde{\omega}_j = \widetilde{\omega}_j + \overline{\widetilde{\omega}_j}$
and $w^*_{n+j}=\Im \widetilde{\omega}_j$ for $j=2,\dots, n$.  Above
CR points over $U_\delta$,  $\tilde w_j$ is in $T M_{CR}\otimes\C$ and so tangent
to $M$.  We thus obtain a $2n-2$ dimensional real analytic subbundle of $TM$
that agrees with the real analytic real
subbundle of $TM_{CR}$ induced by the Levi-foliation  above $U_\delta$. Since $M_{CR}$ 
and the subbunldes are real analytic and $M_{CR}$ is connected, they agree  over $M_{CR}$. 

The real analytic distribution spanned by $\{\omega^*_i\}$ has constant rank ($2n-2$) everywhere and is involutive on an open subset of $M_{CR}$ and hence
everywhere.
\end{proof}

\section{CR singular set of type C.x submanifolds} \label{sec:crsingc1}

Let $M \subset \C^{n+1}$
be a codimension two Levi-flat CR singular submanifold
that is an image of $\R^2 \times \C^{n-1}$ via a real-analytic CR map, and
let $S \subset M$ be the CR singular set of $M$.
In \cite{LMSSZ} it was proved that near a generic point of $S$
exactly one of the following is true:
\begin{enumerate}[(i)]
\item $S$ is Levi-flat submanifold of dimension $2n-2$ and
CR dimension $n-2$.
\item $S$ is a complex submanifold of complex dimension $n-1$ (real dimension
$2n-2$).
\item $S$ is Levi-flat submanifold of dimension $2n-1$ and
CR dimension $n-1$.
\end{enumerate}
We only have the above
classification for a generic point of $S$,
and $S$ need not be a CR submanifold everywhere.  See \cite{LMSSZ}
for examples.

If $M$ is a Levi-flat CR singular submanifold and the Levi-foliation
of $M_{CR}$
extends to $M$, then by Lemma~\ref{lemma:folisabs} at a generic point
$S$ has to be of one of the above types.
A corollary of Theorem~\ref{thm:folextendsCxtype} is the
following result.

\begin{cor}
Suppose that $M \subset \C^{n+1}$, $n \geq 2$, is a real-analytic
Levi-flat CR singular type C.1 or type C.0 submanifold.  Let
$S \subset M$ denote the CR singular set.
Then near the origin $S$ is a submanifold of dimension $2n-2$, and
at a generic point, $S$ is either CR Levi-flat of dimension $2n-2$
(CR dimension $n-2$) or a complex submanifold of complex dimension $n-1$.

Furthermore, if $M$ is of type C.1, then
at the origin
$S$ is a CR Levi-flat submanifold of dimension $2n-2$ (CR dimension $n-2$).
\end{cor}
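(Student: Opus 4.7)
The plan combines Theorem \ref{thm:folextendsCxtype}, the classification from \cite{LMSSZ} recalled at the start of this section, and a direct analysis of the defining equation via Proposition \ref{prop:crsingset}. First, by Theorem \ref{thm:folextendsCxtype} and Lemma \ref{lemma:folisabs}, $M$ is near $0$ the image of an open subset of $\R^2\times\C^{n-1}$ under a real-analytic CR diffeomorphism, placing $S$ in the setting of \cite{LMSSZ}. Hence at each generic point of $S$ exactly one of the three options (i), (ii), (iii) listed at the start of this section must hold.

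To rule out option (iii) and show that $S$ is locally a submanifold of dimension exactly $2n-2$, I would write $\rho = \bar z_1 z_2 + \epsilon\bar z_1^2 + r$ with $\epsilon\in\{0,1\}$ and $r=O(3)$. Since $\rho_{\bar z_1} = z_2 + 2\epsilon\bar z_1 + O(2)$ has nonvanishing differential at the origin, Proposition \ref{prop:crsingset} places $S$ inside the smooth real-analytic submanifold
\begin{equation}
\Sigma := \{w=\rho,\ \rho_{\bar z_1} = 0\}
\end{equation}
of real dimension $2n-2$. This gives $\dim S \leq 2n-2$, ruling out (iii). Combined with the generic classification, $S$ has dimension $2n-2$ at generic points, so $S$ contains a nonempty open subset of $\Sigma$. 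The remaining equations $\rho_{\bar z_j}=0$ ($j\geq 2$) are real-analytic functions on $\Sigma$ vanishing on that open subset, which must accumulate at $0$; by the real-analytic identity theorem on the connected manifold $\Sigma$ near $0$, they vanish on a whole neighborhood of $0$ in $\Sigma$, yielding $S=\Sigma$ there.

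For the second assertion assume $\epsilon=1$. Linearizing the defining equations yields $T_0 S$ as the real $(2n-2)$-dimensional subspace cut out by $dw=0$ and $dz_2 + 2\,d\bar z_1 = 0$. A vector $v\in T_0 S$ lies in $T_0 S \cap J T_0 S$ precisely when $Jv$ also satisfies these equations, producing the additional constraint $dz_2 - 2\,d\bar z_1 = 0$; together they force the $z_1$ and $z_2$ components of $v$ to vanish. Hence $T_0 S \cap J T_0 S$ is the complex span of $\partial/\partial z_3,\ldots,\partial/\partial z_n$, of complex dimension $n-2$, so the CR dimension of $S$ at $0$ equals $n-2$. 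Upper semicontinuity of $\dim_\R(T_p S \cap J T_p S)$ then bounds the CR dimension above by $n-2$ on a neighborhood of $0$ in $S$, which rules out option (ii) at generic nearby points and forces them under option (i). Thus $S$ is CR Levi-flat with CR dimension $n-2$ on an open dense subset near $0$, and real-analyticity together with the equality of the CR dimension at $0$ extends these properties to the origin itself.

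The main obstacle I anticipate is the real-analytic propagation step: verifying that $S=\Sigma$ holds in a \emph{full} neighborhood of $0$ rather than merely sharing an open subset, and, in the type C.1 case, showing that the CR dimension is genuinely constant $n-2$ on a full neighborhood of $0$ so that $S$ is a bona fide CR submanifold there. Both rely on the accumulation of generic points of $S$ at the origin together with the identity theorem for real-analytic functions on a connected manifold.
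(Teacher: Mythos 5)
Your proposal is correct and takes essentially the same route as the paper's proof: Theorem~\ref{thm:folextendsCxtype} together with Lemma~\ref{lemma:folisabs} to invoke the trichotomy of \cite{LMSSZ}, Proposition~\ref{prop:crsingset} plus the implicit function theorem to trap $S$ inside the $(2n-2)$-dimensional manifold $\Sigma$ (the paper's $\widetilde{S}$), the dimension count against the classification to exclude option (iii) and conclude $S=\Sigma$ near $0$, and for type C.1 the observation that the tangent space at the origin is CR of CR dimension $n-2$ but not complex. Your identity-theorem and upper-semicontinuity arguments simply make explicit the steps the paper leaves implicit (``as we saw above $S$ is of dimension at least $2n-2$'' and the one-line C.1 conclusion), so this is an expansion of the same proof rather than a different one.
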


\begin{proof}
Let us take $M$ to be given by
\begin{equation}
w = \bar{z}_1 z_2 + \epsilon \bar{z}_1^2 + r(z,\bar{z})
\end{equation}
where $r$ is $O(3)$ and $\epsilon = 0$ or $\epsilon = 1$.

By Proposition~\ref{prop:crsingset}
the CR singular set is exactly where
\begin{equation}
z_2 + \epsilon 2 \bar{z}_1 + r_{\bar{z}_1}(z,\bar{z}) = 0, \qquad
\text{and} \qquad
r_{\bar{z}_j}(z,\bar{z}) = 0 \quad \text{for all $j=2,\ldots,n$}.
\end{equation}
By considering the real and imaginary parts of the first equation
and applying the implicit function theorem the set
$\widetilde{S} = \{ z : z_2 + \epsilon 2 \bar{z}_1 + r_{\bar{z}_1}(z,\bar{z}) =
0 \}$ is a real submanifold of real dimension $2n-2$ (real codimension 2 in
$M$).
Now $S \subset \widetilde{S}$, but as we saw above $S$ is of dimension at least
$2n-2$.
Therefore $S = \widetilde{S}$ near the origin.
The conclusion of the first part then follows from the classification above.

The stronger conclusion for C.1 submanifolds follows by noticing
that when $\epsilon = 1$, the submanifold
\begin{equation}
z_2 + 2 \bar{z}_1 + r_{\bar{z}_1}(z,\bar{z}) = 0
\end{equation}
is CR and not complex at the origin.
\end{proof}


\section{Mixed-holomorphic submanifolds} \label{sec:onebar}

Let us study sets in $\C^m$ defined by
\begin{equation} \label{eq:mixedeq}
f(\bar{z}_1,z_2,\ldots,z_m) = 0 ,
\end{equation}
for a single holomorphic function $f$ of $m$ variables.

Such sets have much in common with complex varieties, since they are
in fact complex varieties when $\bar{z}_1$ is treated as a complex variable.
The distinction is that the automorphism group is different since we
are interested in automorphisms that are holomorphic not mixed-holomorphic.

\begin{prop}
If $M \subset \C^m$ is a submanifold with a defining equation
of the form \eqref{eq:mixedeq},
where $f$ is a holomorphic function that is not identically zero,
then $M$ is a real codimension 2 set and $M$ is either a complex submanifold
or a Levi-flat submanifold, possibly CR singular.
Furthermore, if $M$ is CR singular at $p \in M$, and has a nondegenerate complex tangent
at $p$,
then $M$ has type A.$k$, C.0, or C.1 at $p$.
\end{prop}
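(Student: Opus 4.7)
The plan is to exploit the fact that $g(z,\bar z) := f(\bar z_1, z_2, \ldots, z_m)$ has a Taylor series containing only monomials $\bar z_1^{\beta}\, z_2^{\alpha_2}\cdots z_m^{\alpha_m}$, with no $z_1$ and no $\bar z_j$ for $j \geq 2$ ever appearing. Every restriction in the conclusion will come from this shape. That $M$ has real codimension $2$ is immediate, since the complex hypersurface $\{f(\zeta_1,z_2,\ldots,z_m)=0\}\subset\C^m$ has real codimension $2$ and conjugation in the first coordinate is a real diffeomorphism of $\C^m$ carrying it onto $M$.

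For the complex-versus-Levi-flat dichotomy I work locally at a point $p \in M$ using a minimal defining function, so that either some $\partial f/\partial z_j(p) \ne 0$ for $j \geq 2$ or $\partial f/\partial \bar z_1(p) \ne 0$. In the first subcase the implicit function theorem produces a graph $z_j = h(\bar z_1, z_2, \ldots, \hat z_j, \ldots, z_m)$ with $h$ holomorphic; if $\partial h/\partial \bar z_1 \equiv 0$ near $p$, then $h$ is holomorphic in all its arguments and $M$ is locally a complex submanifold, while otherwise Proposition~\ref{prop:imagelf} (after swapping the roles of $z_j$ and $w$) yields that $M$ is Levi-flat near $p$. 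In the remaining subcase I solve $\bar z_1 = h(z_2, \ldots, z_m)$: if $h$ is constant, $M$ is a complex hyperplane, and otherwise $\{dh = 0\}$ is a proper analytic subvariety whose complement reduces to the previous subcase, so $M_{CR}$ is dense and Levi-flat, hence so is $M$.

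For the classification of types at a CR singular point with nondegenerate complex tangent, I translate $p$ to $0$ and again split into the same two subcases. In the first, CR-singularity at $0$ forces $\partial h/\partial \bar z_1(0) = 0$ by Proposition~\ref{prop:crsingset}; absorbing the linear and holomorphic-quadratic parts of $h$ into a new coordinate $w$ puts the equation in the form
\begin{equation*}
w = c\,\bar z_1^2 + \bar z_1 \sum_{k \geq 2} d_k\, z_k + O(3),
\end{equation*}
so in the notation of Proposition~\ref{prop:astensors} we have $A = e_1 v^T$ with $v = (0,d_2,\ldots)^T$ (rank at most $1$) and $B = c\, e_1 e_1^T$. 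If $A = 0$, nondegeneracy forces $c \ne 0$ and the type is A.1; if $A \ne 0$, the normalization used inside the proof of Theorem~\ref{thm:quadratic} shows the type is C.0 when $c = 0$ and C.1 when $c \ne 0$. In the second subcase, solving $\bar z_1 = h(z_2,\ldots,z_m)$ with $dh(0) = 0$ gives $z_1 = \bar h(\bar z_2,\ldots,\bar z_m)$, so in the coordinates $w = z_1$, $z = (z_2,\ldots,z_m)$ the equation reads $w = B(\bar z,\bar z) + O(3)$ with $A = 0$, yielding type A.$k$ for $k = \operatorname{rank} B$.

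The main obstacle is excluding type B.$\gamma$ when $A \ne 0$ in the first subcase. The invariant that does this is the property ``$A = \mu\, u u^*$ for some $\mu \in \C$ and $u \in \C^n$'', which is preserved by the equivalence $A \mapsto \lambda T^* A T$ from Proposition~\ref{prop:astensors}; our $A = e_1 v^T$ with $v_1 = 0$ and $v \ne 0$ fails it, because $\mu u u^* = e_1 v^T$ with $\mu \ne 0$ forces $u_j \bar u_k = 0$ for $j,k \geq 2$, hence $u_j = 0$ for $j \geq 2$, which in turn forces $v_k = 0$ for $k \geq 2$ and contradicts $v \ne 0$. Once B.$\gamma$ is ruled out, the remaining steps are routine applications of Propositions~\ref{prop:imagelf} and~\ref{prop:crsingset} together with Theorem~\ref{thm:quadratic}.
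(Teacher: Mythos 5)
Your proposal is correct and takes essentially the same route as the paper's proof: write $M$ locally as a graph, split into the two cases according to whether one solves for $\bar z_1$ (purely antiholomorphic graph, giving type A.$k$) or for one of $z_2,\ldots,z_m$ (giving $w=\rho(\bar z_1,z_2,\ldots,z_n)$), obtain Levi-flatness from Proposition~\ref{prop:imagelf}, locate the CR singular set via Proposition~\ref{prop:crsingset}, absorb the holomorphic terms into $w$, and classify the quadratic part $c\bar z_1^2+\bar z_1\sum_{k\geq 2}d_kz_k$. The one place you deviate is where the paper merely ``notices'' the type must be A.$k$, C.0, or C.1: instead of directly normalizing this quadratic pair, you invoke Theorem~\ref{thm:quadratic} and exclude B.$\gamma$ by the (correct) observation that the property ``$A=\mu uu^*$'' is preserved under $A\mapsto\lambda T^*AT$ and fails for $A=e_1v^T$ with $v_1=0$, $v\neq 0$ --- a harmless and in fact useful elaboration of a step the paper leaves implicit.
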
 

\begin{proof}
Since the zero set of $f$ is a complex variety in the 
$(\bar{z}_1,z_2,\ldots,z_m)$ space, we get automatically that it is real
codimension 2.  We also have that as it is a submanifold, then it can be
written as a graph of one variable over the rest.

Let $m = n+1$ for convenience and suppose that $M \subset \C^{n+1}$
is a submanifold through the origin. By factorization for a germ of holomorphic
function and by the smoothness assumption on $M$
we may assume that $df(0) \neq 0$.  Call the variables $(z_1,\ldots,z_n,w)$
and write $M$ as a graph. One possibility is that we write $M$ as
\begin{equation}
\bar{w} = \rho(z_1,\ldots,z_n),
\end{equation}
where $\rho(0) = 0$ and $\rho$ has no linear terms.
$M$ is complex if $\rho \equiv 0$.  Otherwise
$M$ is CR singular and we rewrite it as
\begin{equation}
w = \bar{\rho}(\bar{z}_1,\ldots,\bar{z}_n).
\end{equation}
We notice that the matrix representing the Levi-map must be identically
zero, so we must get Levi-flat.
If there are any quadratic terms we obtain a
type A.$k$ submanifold.

Alternatively $M$ can be written as
\begin{equation}
w = \rho(\bar{z}_1,z_2,\ldots,z_n),
\end{equation}
with $\rho(0) = 0$. If $\rho$ does not depend on $\bar{z}_1$ then $M$ is complex. Assume that $\rho$
depends on $\bar{z}_1$.
If $\rho$ has linear terms in $\bar{z}_1$, then $M$ is CR.
Otherwise it is a CR singular submanifold, and near non-CR singular points
it is a generic codimension 2 submanifold.
The CR singular set of $M$
is defined by $\frac{\partial\rho}{\partial \overline z_1}=0$. 

Suppose that $M$ is CR singular.
That $M$ is Levi-flat follows from Proposition~\ref{prop:imagelf}.
We can therefore normalize the quadratic term, after linear terms
in $z_2,\ldots,z_n$ are absorbed into $w$.

If not all quadratic terms are zero, then we notice that we
must have an A.$k$, C.0, or C.1 type submanifold.
\end{proof}

Let us now study normal forms for such sets in $\C^2$ and $\C^m$, $m \geq 3$.
First
in two variables we can easily completely answer the question.
This result is surely well-known and classical.

\begin{prop}
If $M \subset \C^2$ is a submanifold with a defining equation of the form
\eqref{eq:mixedeq}, then  it is locally
biholomorphically equivalent to a submanifold in
coordinates $(z,w) \in \C^2$ of the form
\begin{equation}
w = \bar{z}^d
\end{equation}
for $d=0,1,2,3,\ldots$ where $d$ is a local biholomorphic invariant of $M$.
If $d=0$, $M$ is complex, if $d=1$ it is a CR totally-real submanifold,
and if $d \geq 2$ then $M$ is CR singular.
\end{prop}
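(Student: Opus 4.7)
The plan is to combine the structural form from the preceding proposition with a single one-variable biholomorphism, then verify that $d$ is a biholomorphic invariant by a short complexification argument.

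First, by the preceding proposition, after renaming coordinates (and possibly replacing $f$ by its complex conjugate) we may assume that $M$ is given near $0$ by $w = g(\bar z)$ for a convergent power series $g$ with $g(0) = 0$. If $g \equiv 0$, then $M = \{w = 0\}$ is a complex submanifold and we are in the case $d = 0$. Otherwise let $d \ge 1$ be the order of vanishing of $g$ at $0$ and factor
\begin{equation}
g(\bar z) = \bar z^d\,u(\bar z),\qquad u(0) \ne 0.
\end{equation}

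Second, I would eliminate $u$ by a biholomorphism of the $z$-plane alone. Choose a holomorphic germ $h$ with $h^d = u$ and $h(0) \ne 0$, and let $\bar h$ denote the holomorphic function whose Taylor coefficients are the complex conjugates of those of $h$. The map $\phi(z) := z\,\bar h(z)$ is a biholomorphism at $0$ since $\phi'(0) = \bar h(0) \ne 0$, and in the new coordinate $\tilde z := \phi(z)$ one has $\overline{\tilde z} = \bar z\,h(\bar z)$, so
\begin{equation}
\overline{\tilde z}^{\,d} = \bar z^d\,h(\bar z)^d = \bar z^d\,u(\bar z) = g(\bar z) = w,
\end{equation}
which puts $M$ into the model form $w = \overline{\tilde z}^{\,d}$.

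Third, I would establish invariance of $d$. The three cases $d = 0$ (complex), $d = 1$ (CR and totally real), and $d \ge 2$ (CR singular with isolated complex tangent at $0$) are mutually exclusive CR types and so cannot be biholomorphically confused. For two CR singular models with $d, d' \ge 2$, suppose $\Phi = (F, G)$ is a germ of biholomorphism of $(\C^2, 0)$ sending $\{w = \bar z^d\}$ to $\{w = \bar z^{d'}\}$. Plugging the parametrization $(z, \bar z^d)$ into the target equation yields the real-analytic identity $G(z, \bar z^d) = \overline{F(z, \bar z^d)}^{\,d'}$, which complexifies (by treating $\bar z$ as an independent variable $\zeta$) to the holomorphic identity
\begin{equation}
G(z,\zeta^d) = \bar F(\zeta, z^d)^{d'},
\end{equation}
where $\bar F$ is the holomorphic function with Taylor coefficients conjugate to those of $F$. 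Setting $\zeta = 0$ and matching lowest-order terms in $z$ forces $\partial_z G(0) = 0$, so invertibility of $\Phi'(0)$ gives $\partial_z F(0) \ne 0$ and $\partial_w G(0) \ne 0$. Setting $z = 0$ in the identity, the left-hand side then begins with $\bigl(\partial_w G(0)\bigr)\zeta^d$ while the right-hand side begins with $\bigl(\overline{\partial_z F(0)}\bigr)^{d'}\zeta^{d'}$, and comparing exponents yields $d = d'$.

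The step I expect to be the main technical point is the invariance argument; however, once one complexifies the mixed-holomorphic identity, it reduces to a clean comparison of the lowest-order terms of two holomorphic power series in a single variable.
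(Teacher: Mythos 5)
Your proof is correct and takes essentially the same route as the paper: write $M$ as a graph $w = g(\bar z)$ after possibly conjugating the equation, then remove the unit factor of $g$ by a biholomorphic change of the $z$-coordinate alone (your $\phi(z) = z\,\bar h(z)$ with $h^d = u$ is exactly the paper's ``$f$ is locally biholomorphic to a positive power of the variable''). The only addition is that you spell out the invariance of $d$ via complexification of the mixed identity $G(z,\zeta^d) = \bar F(\zeta, z^d)^{d'}$ and comparison of lowest-order terms, a step the paper compresses into ``the rest follows easily''; that argument is also correct.
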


\begin{proof}
Write the submanifold as a graph of one
variable over the other.  Without loss of generality and after possibly
taking a conjugate of the equation, we have
\begin{equation}
w = f(\bar{z})
\end{equation}
for some holomorphic function $f$.  Assume $f(0) = 0$.
If $f$ is identically zero, then $d=0$ and we are finished.
If $f$ is not identically zero, then
it is locally biholomorphic to a positive power of the variable.
We apply a holomorphic change of coordinates in $z$, and the rest follows easily.
\end{proof}

In three or more variables, if $M \subset \C^{n+1}$, $n \geq 2$,
is a submanifold through the origin, then
if the quadratic part is nonzero
we have seen above that it can be a type A.$k$, C.0, or C.1 submanifold.
If the submanifold is the nondegenerate type C.1
submanifold, then we will show in the next section 
that $M$ is biholomorphically equivalent to the quadric $M_{C.1}$.

Before we move to C.1, let us quickly consider the mixed-holomorphic
submanifolds of type A.$n$.
The submanifolds of type A.$n$ in $\C^{n+1}$ can in some sense be considered
nondegenerate when talking about mixed-holomorphic submanifolds.

\begin{prop}
If $M \subset \C^{n+1}$ is a submanifold of type
A.$n$ at the origin of the form
\begin{equation}
w = \bar{z}_1^2+ \cdots + \bar{z}_n^2 + r(\bar{z})
\end{equation}
where $r \in O(3)$.  Then $M$ is locally near the origin biholomorphically
equivalent to the A.$n$ quadric
\begin{equation}
w = \bar{z}_1^2+ \cdots + \bar{z}_n^2 .
\end{equation}
\end{prop}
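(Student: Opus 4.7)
The plan is to recognize that since the defining equation involves only the antiholomorphic variables $\bar z$ and no $z$, finding a biholomorphic equivalence with the A.$n$ quadric reduces to an application of the holomorphic Morse lemma to a single holomorphic function on $\C^n$.

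First, I set $g(\zeta) = \zeta_1^2 + \cdots + \zeta_n^2 + r(\zeta)$, a holomorphic function on a neighborhood of $0$ in $\C^n$, so that $M$ is defined by $w = g(\bar z)$. Since $r \in O(3)$, the function $g$ satisfies $g(0)=0$, $dg(0)=0$, and its complex Hessian at $0$ is $2I$, in particular nondegenerate. The holomorphic Morse lemma then provides a local biholomorphism $\Psi \colon (\C^n,0) \to (\C^n,0)$ with
\begin{equation}
g\bigl(\Psi(\zeta)\bigr) = \zeta_1^2 + \cdots + \zeta_n^2.
\end{equation}

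Next, I define a local biholomorphism of $\C^n$ by $H(z) = \overline{\Psi^{-1}(\bar z)}$. This $H$ is holomorphic in $z$ because $\Psi^{-1}$ is holomorphic and conjugation composed with an antiholomorphic map is holomorphic; moreover $H(0)=0$ and $H$ is invertible near the origin. I then apply the biholomorphism of $\C^{n+1}$ given by $(z,w) \mapsto (H(z), w)$. In the new coordinates $(Z,W)$, I have $\bar Z = \Psi^{-1}(\bar z)$, hence
\begin{equation}
\bar Z_1^2 + \cdots + \bar Z_n^2 = g\bigl(\Psi(\bar Z)\bigr) = g(\bar z) = w = W,
\end{equation}
so $M$ is transformed into the A.$n$ quadric.

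The only nontrivial input is the holomorphic Morse lemma, which is classical. Thus there is no genuine obstacle here; the content of the proposition is essentially that the mixed-holomorphic A.$n$ case is rigid by Morse's lemma, in sharp contrast with the general A.$n$ case in the preceding section, where the defining function may involve both $z$ and $\bar z$ and Morse's lemma does not apply directly.
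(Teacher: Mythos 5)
Your proposal is correct and is essentially the paper's own proof: the paper likewise reduces the statement to the complex Morse lemma (citing Proposition 3.15 in Ebeling) applied to $z_1^2+\cdots+z_n^2+\bar r(z)$, with a change of coordinates in the $z$ variables only. Your explicit conjugation bookkeeping via $H(z)=\overline{\Psi^{-1}(\bar z)}$ just spells out the step the paper leaves implicit.
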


\begin{proof}
The complex Morse lemma (see e.g.\ Proposition 3.15 in \cite{Ebeling}) states that there is a local change of coordinates
near the origin in just the $z$ variables such that
\begin{equation}
z_1^2+ \cdots + z_n^2 + \bar{r}(z)
\end{equation}
is equivalent to $z_1^2+ \cdots + z_n^2$.
\end{proof}

It is not difficult to see that the normal form for mixed-holomorphic
submanifolds in $\C^{n+1}$ of type A.$k$, $k < n$,
is equivalent to a local normal form for a holomorphic function in $n$
variables.  Therefore for example the submanifold $w = \bar{z}_1^2 +
\bar{z}_2^3$ is of type A.1 and is not equivalent to any quadric.


\section{Formal normal form for certain C.1 type submanifolds I}

In this section we prove the formal normal form in
Theorem~\ref{thm:themixedholform}.  That is, we prove that
if $M \subset \C^{n+1}$
is defined by
\begin{equation}
w = \bar{z}_1 z_2 + \bar{z}_1^2 + r(z_1,\bar{z}_1,z_2,z_3,\ldots,z_n) ,
\end{equation}
where $r$ is $O(3)$, then $M$ is Levi-flat and formally equivalent to
\begin{equation}
w = \bar{z}_1z_2 + \bar{z}_1^2 .
\end{equation}

That $M$ is Levi-flat follows from Proposition~\ref{prop:imagelf}.

\begin{lemma} \label{lemma:formalpartofnormform}
If $M \subset \C^{n+1}$, $n \geq 2$, is given by
\begin{equation}
w = \bar{z}_1 z_2 + \bar{z}_1^2 + r(z_1,\bar{z}_1,z_2,z_3,\ldots,z_n)
\end{equation}
where $r$ is $O(3)$ formal power series
then $M$ is formally equivalent to $M_{C.1}$ given by
\begin{equation}
w = \bar{z}_1z_2 + \bar{z}_1^2 .
\end{equation}
In fact, the normalizing transformation can be of the form
\begin{equation}
(z,w) = (z_1,\ldots,z_n,w)
\mapsto
\bigl(
z_1, \quad f(z,w), \quad z_3, \quad \ldots, \quad z_n, \quad g(z,w)
\bigr) ,
\end{equation}
where $f$ and $g$ are formal power series.
\end{lemma}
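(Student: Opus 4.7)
The plan is to complexify the real defining equation by treating $\bar z_1$ as a new formal independent variable $\zeta$, which turns the problem into an algebraic question about a complex hypersurface. After complexification, $M$ corresponds to the zero locus of
\begin{equation}
P(\zeta, z, w) := \zeta^2 + z_2 \zeta - w + r(z_1, \zeta, z_2, \ldots, z_n).
\end{equation}
Because $r = O(3)$, the restriction $P(\zeta, 0, 0) = \zeta^2 + O(\zeta^3)$ has order exactly two in $\zeta$, so the formal Weierstrass preparation theorem applied in $\zeta$, with parameters $(z_1, \ldots, z_n, w)$, yields a factorization $P = u \cdot p$ where $u$ is a formal unit and
\begin{equation}
p(\zeta, z, w) = \zeta^2 + A(z, w)\zeta + B(z, w)
\end{equation}
is a Weierstrass polynomial, with $A, B$ formal power series vanishing at $0$. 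Matching lowest-order terms gives $A(z, w) = z_2 + O(2)$ and $B(z, w) = -w + O(2)$.

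Next I set $f(z, w) := A(z, w)$, $g(z, w) := -B(z, w)$, and define
\begin{equation}
\Phi(z, w) := \bigl(z_1,\, f(z,w),\, z_3, \ldots, z_n,\, g(z,w)\bigr).
\end{equation}
From $f = z_2 + O(2)$ and $g = w + O(2)$ the Jacobian of $\Phi$ at the origin is the identity, so $\Phi$ is a formal biholomorphism of $\C^{n+1}$. The factorization $P = u \cdot p$ with $u$ a unit means that whenever $P(\zeta, z, w) = 0$ one also has $p(\zeta, z, w) = 0$; specializing to $\zeta = \bar z_1$ on $M$ therefore gives
\begin{equation}
\bar z_1^{\,2} + f(z, w)\, \bar z_1 - g(z, w) = 0,
\end{equation}
that is, $g(z, w) = \bar z_1 f(z, w) + \bar z_1^{\,2}$. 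Since $\bar{\tilde z}_1 = \bar z_1$ under $\Phi$, this is precisely the defining equation $\tilde w = \bar{\tilde z}_1 \tilde z_2 + \bar{\tilde z}_1^{\,2}$ of $M_{C.1}$, so $\Phi(M) = M_{C.1}$ as formal submanifolds, yielding the desired normalization.

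The conceptual content is the observation that the two formal $\zeta$-roots of $P$ (one of which is the real $\bar z_1$) are only double-valued in $(z, w)$, but their symmetric functions $A$ and $B$ are single-valued formal power series in $(z, w)$, and these are exactly what must be used as the new coordinates. Once this is recognized, the rest is essentially tautological: formal Weierstrass preparation is a standard fact, and the only thing to verify is the invertibility of $\Phi$, which is immediate from the leading-order computation.
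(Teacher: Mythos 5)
Your argument is correct, but it takes a genuinely different route from the paper's. The paper's proof of this lemma is an induction on degree: writing the degree-$d$ part of the error as $\sum c_{j,\alpha}\bar{z}_1^j z^\alpha$, it kills the terms carrying the top power $\bar{z}_1^k$ by substituting $w \mapsto w + \sum c_{j,\alpha} w^{k/2} z^\alpha$ when $k$ is even and $z_2 \mapsto z_2 - \sum c_{j,\alpha} w^{(k-1)/2} z^\alpha$ when $k$ is odd, lowering the $\bar{z}_1$-degree step by step; the claimed form of the transformation arises as the infinite composition of these elementary substitutions. You instead get $f$ and $g$ in one stroke: complexify $\bar{z}_1$ to $\zeta$ and apply formal Weierstrass preparation in $\zeta$, so that $f=A$ and $g=-B$ are the elementary symmetric functions of the two formal roots. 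This is a valid and in fact illuminating shortcut. To make your key deduction fully precise, note that substituting the series $w = \bar{z}_1 z_2 + \bar{z}_1^2 + r$ and $\zeta = \bar{z}_1$ into the identity $P = u\cdot p$ gives $0 = (u|_M)\cdot(p|_M)$ in $\C[[z_1,\ldots,z_n,\bar{z}_1]]$, where $u|_M$ is a unit; since this ring is an integral domain, $p|_M=0$, which is exactly $g(z,w) = \bar{z}_1 f(z,w) + \bar{z}_1^2$ on $M$. Your approach buys two things the paper's induction does not make visible at this stage: first, it exposes the involution swapping the two roots of $p$ (whose symmetric functions are your $A,B$) as the structural reason the normalization exists --- this is precisely the involution the paper introduces separately and exploits in Lemma~\ref{lemma:convergence} and in the automorphism-group computations; second, when $r$ is convergent, the analytic Weierstrass preparation theorem immediately yields convergent $f,g$, so your method proves the convergent statement of Theorem~\ref{thm:themixedholform} directly, bypassing the paper's separate convergence argument via homogeneity and Lemma~\ref{lemma:convlemma}.

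One small inaccuracy: matching lowest-order terms gives $B = -w + O(2)$ exactly, but the linear part of $A$ need not be $z_2$; it can contain a $w$-term coming from the unit $u$. For instance, with $r = \bar{z}_1^3$ the two small roots of $\zeta^2(1+\zeta)=w$ (at $z_2=0$) sum to $-w + O(w^2)$, so $A = z_2 + w + O(2)$. Hence your claim that the Jacobian of $\Phi$ at the origin is the identity is false in general; what is true is that the linear part of $\Phi$ is triangular with $1$'s on the diagonal, hence invertible, so $\Phi$ is still a formal biholomorphism and the rest of your argument goes through unchanged.
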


\begin{proof}
Suppose that the normalization was done to degree $d-1$, then suppose that
\begin{equation}
w = \bar{z}_1 z_2 + \bar{z}_1^2 + r_1(z_1,\bar{z}_1,z_2,\ldots,z_n) +
r_2(z_1,\bar{z}_1,z_2,\ldots,z_n) ,
\end{equation}
where $r_1$ is degree $d$ homogeneous and $r_2$ is $O(d+1)$.
Write
\begin{equation}
r_1(z_1,\bar{z}_1,z_2,\ldots,z_n) =
\sum_{j=0}^k \sum_{\abs{\alpha}+j = d} c_{j,\alpha}
\bar{z}_1^j z^\alpha ,
\end{equation}
where $k$ is the highest power of $\bar{z}_1$ in $r_1$, and $\alpha$ is
a multiindex.

If $k$ is even, then use the transformation that replaces
$w$ with
\begin{equation}
w + \sum_{\abs{\alpha}+k = d} c_{j,\alpha} w^{k/2} z^\alpha .
\end{equation}
Let us look at the degree $d$ terms in
\begin{equation}
(\bar{z}_1 z_2 + \bar{z}_1^2)
+ \sum_{\abs{\alpha}+k = d} c_{j,\alpha}
{(\bar{z}_1 z_2 + \bar{z}_1^2)}^{k/2} z^\alpha
=
\bar{z}_1 z_2 + \bar{z}_1^2 + r_1(z_1,\bar{z}_1,z_2,\ldots,z_n) .
\end{equation}
We need not include $r_2$ as the terms are all degree $d+1$ or more.
After cancelling out the new terms on the left,
we notice that the formal transformation removed all the terms in $r_1$
with a power $\bar{z}_1^k$ and replaced them with terms that have
a smaller power of $\bar{z}_1$.

Next suppose that $k$ is odd.
We use the transformation that replaces $z_2$ with
\begin{equation}
z_2 - \sum_{\abs{\alpha}+k = d} c_{j,\alpha} w^{(k-1)/2} z^\alpha .
\end{equation}
Let us look at the degree $d$ terms in
\begin{equation}
\bar{z}_1 z_2 + \bar{z}_1^2
=
\bar{z}_1 \left(
z_2 - \sum_{\abs{\alpha}+k = d} c_{j,\alpha} w^{(k-1)/2} z^\alpha
\right)
 + \bar{z}_1^2 + r_1(z_1,\bar{z}_1,z_2,\ldots,z_n) .
\end{equation}
Again we need not include $r_2$ as the terms are all degree $d+1$ or more,
and we need not add the new terms to $z_2$ in the argument list for $r_1$
since all those terms would be of higher degree.
Again we notice that the formal transformation removed all the terms in $r_1$
with a power $\bar{z}_1^k$ and replaced them with terms that have
a smaller power of $\bar{z}_1$.

The procedure above does not change the form of the submanifold, but
it lowers the degree of $\bar{z}_1$ by one.
Since we can assume that all terms in $r_1$ depend on $\bar{z}_1$, we are
finished with degree $d$ terms after $k$ iterations of the above procedure.
\end{proof}



\section{Convergence of normalization for certain C.1 type submanifolds}

A key point in the computation below is the following
natural involution for the quadric $M_{C.1}$.  Notice
that the map 
\begin{equation}
(z_1,z_2,\ldots,z_n,w) \mapsto (-\bar{z}_2-z_1, \quad z_2, \quad \ldots,
\quad z_n, \quad w) 
\end{equation}
takes $M_{C.1}$ to itself.  The involution simply replaces
the $\bar{z}_1$ in the equation with $-z_2-\bar{z}_1$.
The way this involution is defined is by noticing that
the equation $w = \bar{z}_1 z_2 + \bar{z}_1^2$ has generically
two solutions
for $\bar{z}_1$ keeping $z_2$ and $w$ fixed.
In the same way we could define an involution on all
type C.1 submanifolds of the form 
$w = \bar{z}_1 z_2 + \bar{z}_1^2 + r(\bar{z}_1,z_2,\ldots,z_n)$,
although we will not require this construction.

We prove convergence via the following well-known lemma:

\begin{lemma} \label{lemma:convlemma}
Let $m_1, \ldots, m_N$ be positive integers.
Suppose $T(z)$ is a formal power series in $z \in \C^N$.  Suppose
$T(t^{m_1}v_1,\ldots, t^{m_N}v_N)$ is a convergent power series in $t \in \C$ for
all $v \in \C^N$.  Then $T$ is convergent.
\end{lemma}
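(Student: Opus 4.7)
The plan is to combine a weighted decomposition with the Baire category theorem and Cauchy-type estimates on polynomials. Write $T(z) = \sum_\alpha c_\alpha z^\alpha$ and organize the monomials by weighted degree $w(\alpha) := m_1 \alpha_1 + \cdots + m_N \alpha_N$, setting
\begin{equation}
P_d(v) = \sum_{w(\alpha) = d} c_\alpha v^\alpha .
\end{equation}
Since each $m_j \geq 1$, each $P_d$ is an honest polynomial in $v$ of total degree at most $d$, and formally
\begin{equation}
T(t^{m_1} v_1, \ldots, t^{m_N} v_N) = \sum_{d \geq 0} t^d P_d(v) .
\end{equation}
By hypothesis, the right-hand side converges as a series in $t$ for each fixed $v$, so by Cauchy's inequality applied in one variable there exist $C(v), R(v) > 0$ with $\abs{P_d(v)} \leq C(v) R(v)^d$ for every $d$.

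Next I would apply Baire category on $\C^N$. Define, for $j, k \in \N$,
\begin{equation}
E_{j,k} = \bigl\{ v \in \C^N : \abs{P_d(v)} \leq j \, k^d \text{ for all } d \geq 0 \bigr\} .
\end{equation}
Each $E_{j,k}$ is closed (an intersection of closed sets on which continuous polynomials are bounded), and the pointwise estimate above gives $\C^N = \bigcup_{j,k} E_{j,k}$. Hence some $E_{j_0, k_0}$ has nonempty interior, producing $v_0 \in \C^N$ and $\rho > 0$ with
\begin{equation}
\abs{P_d(v)} \leq j_0 k_0^d \quad \text{for all } d \text{ and all } v \text{ with } \abs{v_j - v_{0,j}} \leq \rho .
\end{equation}

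Finally I would translate this uniform geometric bound into a bound on the coefficients $c_\alpha$ about the origin. Expanding $P_d(v_0 + \rho w) = \sum_\gamma b_\gamma^{(d)} w^\gamma$, Cauchy's inequality on the unit polydisc gives $\abs{b_\gamma^{(d)}} \leq j_0 k_0^d$ for every $\gamma$ with $\abs{\gamma} \leq d$. Substituting $w = (v-v_0)/\rho$ and expanding $(v-v_0)^\gamma$ in powers of $v$, each $c_\alpha$ (with $w(\alpha) = d$) becomes a finite linear combination of the $b_\gamma^{(d)}$ weighted by factors $\rho^{-\abs{\gamma}}$, binomial coefficients, and powers of $v_{0,j}$. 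A routine bookkeeping, bounding $\binom{\gamma_j}{\alpha_j} \leq 2^{\abs{\gamma}}$ and absorbing any polynomial-in-$d$ factors, yields constants $A, K > 0$ depending only on $v_0, \rho, k_0, N$ with $\abs{c_\alpha} \leq A K^d$. Since $d = w(\alpha) \leq m_{\max} \abs{\alpha}$, this gives $\abs{c_\alpha} \leq A (K^{m_{\max}})^{\abs{\alpha}}$, so $T$ converges on a polydisc about $0$.

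The Baire step is standard, and the main obstacle is the last step: passing from a uniform geometric bound on each $P_d$ over a polydisc \emph{not centered at the origin} to a geometric bound on the coefficients of $T$ \emph{at} the origin. This requires keeping track of the combinatorial expansion and using the key observation that, because each $m_j \geq 1$, the weighted degree $d$ dominates the ordinary degree $\abs{\alpha}$ up to a fixed multiplicative constant, so geometric bounds in $d$ translate to geometric bounds in $\abs{\alpha}$.
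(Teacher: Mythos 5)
Your argument is correct, and it runs on the same engine (Baire category plus Cauchy estimates) that the paper invokes, but you take a genuinely different route to the general weights: the paper's own proof is a two-line reduction, citing \cite{BER:book} (Theorem 5.5.30, p.~153) for the unweighted case $m_1 = \cdots = m_N = 1$ and then handling $m_j > 1$ by the substitution $v_j = w_j^{m_j}$, i.e.\ applying the unweighted statement to $S(w) := T(w_1^{m_1},\ldots,w_N^{m_N})$, whose convergence yields that of $T$ since each coefficient $c_\alpha$ of $T$ appears injectively as a coefficient of $S$ and $\sum_j m_j \alpha_j \leq m_{\max}\abs{\alpha}$ converts the geometric bound. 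You instead reprove the whole statement directly with the weighted decomposition $P_d(v) = \sum_{w(\alpha)=d} c_\alpha v^\alpha$. The trade-off: the paper's substitution trick is shorter given the cited theorem, while your version is self-contained and makes explicit exactly where positivity of the $m_j$ enters --- it guarantees each $P_d$ is a genuine polynomial of degree at most $d$, and that geometric growth in the weight $d$ translates into geometric growth in $\abs{\alpha}$. All three of your steps are sound: the sets $E_{j,k}$ are closed and exhaust $\C^N$, so Baire produces a polydisc $\abs{v_j - v_{0,j}} \leq \rho$ on which $\abs{P_d(v)} \leq j_0 k_0^d$; the recentering from $v_0$ to the origin costs only a factor of the form $\binom{d+N}{N} M^d$ with $M$ depending on $\rho$, $v_0$, $N$, which is absorbed into a larger geometric base; and since each $\alpha$ occurs in exactly one $P_d$, extracting $c_\alpha$ from $P_{w(\alpha)}$ is unambiguous. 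The only microscopic point to make explicit is that $K$ should be taken $\geq 1$ before using $d \leq m_{\max}\abs{\alpha}$ to pass from $A K^d$ to $A (K^{m_{\max}})^{\abs{\alpha}}$, which your constants clearly permit.
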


The proof is a standard application of the Baire category theorem and the Cauchy inequality.
See \cite{BER:book} (Theorem 5.5.30, p.\ 153) where all $m_j$ are $1$.
For $m_j > 1$ we first change variables by setting $v_j = w_j^{m_j}$ and
apply the lemma with $m_j=1$.

The following lemma finishes the proof of
Theorem~\ref{thm:themixedholform}.  
By absorbing any holomorphic terms into $w$,
we assume that $r(z_1,0,z_2,\ldots,z_n) \equiv 0$.
In Lemma~\ref{lemma:formalpartofnormform}
we have
also constructed a formal transformation that only changed the $z_2$
and $w$ coordinates, so it is enough to prove convergence in this case.
Key points of this proof are that the right hand
side of the defining equation for $M_{C.1}$ is homogeneous, and that
we have a natural involution on $M_{C.1}$.

\begin{lemma} \label{lemma:convergence}
If $M \subset \C^{n+1}$, $n \geq 2$, is given by
\begin{equation}
w = \bar{z}_1 z_2 + \bar{z}_1^2 + r(z_1,\bar{z}_1,z_2,z_3,\ldots,z_n)
\end{equation}
where $r$ is $O(3)$ and convergent, and $r(z_1,0,z_2,\ldots,z_n) \equiv 0$.
Suppose that two formal power series $f(z,w)$ and $g(z,w)$ satisfy 
\begin{equation}\label{gzbz}
g(z,\bar{z}_1z_2 + \bar{z}_1^2) = \bar{z}_1 f(z,\bar{z}_1z_2 + \bar{z}_1^2)
+ \bar{z}_1^2 + r(z_1,\bar{z}_1,f(z,\bar{z}_1z_2 +
\bar{z}_1^2),z_3,\ldots,z_n) .
\end{equation}
Then $f$ and $g$ are convergent.
\end{lemma}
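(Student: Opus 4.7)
The plan is to exploit the mixed-holomorphic involution
$\tau \colon \bar z_1 \mapsto -z_2 - \bar z_1$, which preserves
$W := \bar z_1 z_2 + \bar z_1^2$ since it just swaps the two roots of the
quadratic $\bar z_1^2 + z_2 \bar z_1 - W = 0$.  In particular $\tau$ fixes
$f(z, W)$ and $g(z, W)$, so applying $\tau$ to \eqref{gzbz} and subtracting the
original eliminates $g$ and yields
\begin{equation}\label{eq:planstep1}
(2\bar z_1 + z_2)\bigl(f(z, W) - z_2\bigr)
=
r\bigl(z_1, -z_2 - \bar z_1, f(z, W), z_3, \ldots, z_n\bigr) -
r\bigl(z_1, \bar z_1, f(z, W), z_3, \ldots, z_n\bigr).
\end{equation}
The right-hand side vanishes on the fixed-point locus
$\{2\bar z_1 + z_2 = 0\}$ of $\tau$, so after the linear change of variables
$u = 2\bar z_1 + z_2$ the holomorphic division theorem produces a convergent
$Q(z_1, \bar z_1, y, z_2, z_3, \ldots, z_n)$ with right-hand side equal to
$(2\bar z_1 + z_2)\,Q(z_1,\bar z_1, f(z,W), z_2, z_3,\ldots, z_n)$.
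Cancelling the non-zero-divisor $2\bar z_1 + z_2$ in the formal power series
ring gives the implicit equation
\begin{equation}\label{eq:planstep2}
\Phi = z_2 + Q(z_1, \bar z_1, \Phi, z_2, z_3, \ldots, z_n),
\qquad \Phi := f(z, W).
\end{equation}

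Next I would verify that the holomorphic implicit function theorem applies to
\eqref{eq:planstep2}.  Using $r(z_1, 0, z_2, \ldots) \equiv 0$, expand
$r = \sum_{k \geq 1} \bar z_1^k A_k(z_1, y, z_3, \ldots, z_n)$, each $A_k$ of
order $\geq 2$ because $r$ is $O(3)$.  The elementary identity
$\bigl[(-z_2 - \bar z_1)^k - \bar z_1^k\bigr]/(2\bar z_1 + z_2) =
-P_{k-1}(\bar z_1, z_2)$ with $P_0 \equiv 1$ and $P_{k-1}(0,0) = 0$ for
$k \geq 2$ then gives $Q(0) = -A_1(0) = 0$ and $Q_y(0) = 0$.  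Hence
\eqref{eq:planstep2} has a unique convergent solution $\Phi$ with $\Phi(0)=0$,
and by uniqueness of formal solutions it must coincide with the formal series
$f(z,W)$ furnished by Lemma~\ref{lemma:formalpartofnormform}.  Thus
$f(z, \bar z_1 z_2 + \bar z_1^2)$ converges as a power series in the $n+1$
variables $(z_1, \bar z_1, z_2, z_3, \ldots, z_n)$.

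The final step is to lift convergence from $\Phi$ to $f$ itself via
Lemma~\ref{lemma:convlemma} with weights $(m_1, \ldots, m_n, m_{n+1}) =
(1, \ldots, 1, 2)$ on the variables $(z_1, z_2, \ldots, z_n, w)$.  For any
$v \in \C^{n+1}$, pick $b \in \C$ with $b v_2 + b^2 = v_{n+1}$ (such a root
always exists) and substitute $z_j = v_j t$, $\bar z_1 = b t$; then
$\bar z_1 z_2 + \bar z_1^2 = v_{n+1} t^2$, so
\begin{equation}
f(v_1 t, v_2 t, \ldots, v_n t, v_{n+1} t^2) =
\Phi(v_1 t, b t, v_2 t, \ldots, v_n t)
\end{equation}
is convergent in $t$.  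Lemma~\ref{lemma:convlemma} then yields convergence of
$f$.  Plugging this convergent $f$ back into \eqref{gzbz} makes the right-hand
side a convergent series in $(z_1, \bar z_1, z_2, \ldots, z_n)$, and repeating
the same weighted argument for $g(z,W)$ produces the convergence of $g$.

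The main obstacle is the divisibility-and-cancellation step that converts
\eqref{eq:planstep1} into the implicit equation \eqref{eq:planstep2}:  one must
verify that $Q$ is genuinely convergent (not merely formal) and that $Q(0) = 0$
together with $Q_y(0) = 0$, so that the convergent implicit function theorem
--- rather than just its formal analogue --- is what produces $\Phi$.  Once
this is in hand, the remaining steps are essentially bookkeeping driven by
Lemma~\ref{lemma:convlemma}.
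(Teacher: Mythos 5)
Your proposal is correct and takes essentially the same approach as the paper's proof: the involution $\bar z_1 \mapsto -z_2-\bar z_1$ to eliminate $g$, divisibility of the resulting right-hand side by $2\bar z_1+z_2$, the (convergent) implicit function theorem to solve for $\Phi = f(z,\bar z_1 z_2+\bar z_1^2)$, and Lemma~\ref{lemma:convlemma} with weights $(1,\ldots,1,2)$ via the root $b$ of $bv_2+b^2=v_{n+1}$. The only cosmetic difference is that you apply the implicit function theorem once in all $n+1$ variables and restrict to weighted curves afterwards, whereas the paper first substitutes $z\mapsto tz$, $\bar z_1\mapsto t\bar z_1$ and solves for $\xi$ as a function of $t$ direction by direction.
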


\begin{proof}
The equation \eqref{gzbz} is true formally, 
treating $z_1$ and $\bar{z}_1$
as independent variables. Notice that \eqref{gzbz} has one equation for
2 unknown functions.

We now use the involution on $M_{C.1}$ to create a system
that we can solve uniquely.  We replace
$\bar{z}_1$ with $-z_2-\bar{z}_1$.  We leave $z_1$ untouched (treating as an
independent variable).  We obtain an identity in formal power series:
\begin{multline}
g(z,\bar{z}_1z_2 + \bar{z}_1^2) = (-z_2-\bar{z}_1) f(z,\bar{z}_1z_2 + \bar{z}_1^2)
+ (-z_2-\bar{z}_1)^2 \\
+ r(z_1,(-z_2-\bar{z}_1),f(z,\bar{z}_1z_2 + \bar{z}_1^2),z_3,\ldots,z_n) .
\end{multline}
The formal series
$\xi = f(z,\bar{z}_1z_2 + \bar{z}_1^2)$ and
$\omega = g(z,\bar{z}_1z_2 + \bar{z}_1^2)$ are solutions of the system
\begin{align}
\omega & = \bar{z}_1 \xi
+ \bar{z}_1^2
+ r(z_1,\bar{z}_1,\xi,z_3,\ldots,z_n) , \\
\omega & = (-z_2-\bar{z}_1) \xi
+ (-z_2-\bar{z}_1)^2
+ r(z_1,(-z_2-\bar{z}_1),\xi,z_3,\ldots,z_n) .
\end{align}

We next replace $z_j$ with $t z_j$ and $\bar{z}_1$ with $t \bar{z}_1$
for $t \in \C$.  Because $\bar{z}_1z_2 + \bar{z}_1^2$ is homogeneous of
degree 2, we obtain that for every $(z_1,\bar{z}_1,z_2,\ldots,z_n) \in
\C^{n+1}$ the formal series in $t$ given by
$\xi(t) = f\bigl(tz,t^2(\bar{z}_1z_2 + \bar{z}_1^2)\bigr)$,
$\omega(t) = g\bigl(tz,t^2(\bar{z}_1z_2 + \bar{z}_1^2)\bigr)$
are solutions of the system
\begin{align}
\omega & = t \bar{z}_1 \xi
+ t^2 \bar{z}_1^2
+ r(tz_1,t\bar{z}_1,\xi,tz_3,\ldots,tz_n) , \label{eq:formalsolconvt} \\
\omega & = t (-z_2-\bar{z}_1) \xi
+ t^2 (-z_2-\bar{z}_1)^2
+ r(tz_1,t(-z_2-\bar{z}_1),\xi,t z_3,\ldots, t z_n) .
\end{align}
We eliminate $\omega$ to obtain an equation for $\xi$:
\begin{equation}
t (2 \bar{z}_1 + z_2) ( \xi - t z_2)
=
\\
r(tz_1,t(-z_2-\bar{z}_1),\xi,t z_3,\ldots, t z_n)
- r(tz_1,t\bar{z}_1,\xi,tz_3,\ldots,tz_n) .
\end{equation}
We now treat $\xi$ as a variable and we have a holomorphic (convergent)
equation.  The right hand size must be divisible by
$t (2 \bar{z}_1 + z_2)$:  It is divisible by $t$ since
$r$ was divisible by $\bar{z}_1$.  It is also divisible by
$2 \bar{z}_1 + z_2$ as setting $z_2 = -2 \bar{z}_1$ makes the right
hand side vanish.  Therefore,
\begin{equation}
\xi - t z_2
=
\frac{
r(tz_1,t(-z_2-\bar{z}_1),\xi,t z_3,\ldots, t z_n)
- r(tz_1,t\bar{z}_1,\xi,tz_3,\ldots,tz_n)
}{t (2 \bar{z}_1 + z_2)} ,
\end{equation}
where the right hand side is a holomorphic function (that is, a convergent
power series) in $z_1,\bar{z}_1,z_2,\ldots,z_n,t,\xi$.
For any fixed $z_1,\bar{z}_1,z_2,\ldots,z_n$, we solve for $\xi$ in terms of $t$
via the implicit function theorem,
and we obtain that $\xi$ is a holomorphic
function of $t$.  The power series of $\xi$ is given by
$\xi(t) = f\bigl(tz,t^2(\bar{z}_1z_2 + \bar{z}_1^2)\bigr)$.


Let $v \in \C^{n+1}$ be any nonzero vector.  Via a proper choice
of $z_1,\bar{z}_1,z_2,\ldots,z_n$ (still treating $\bar{z}_1$
and $z_1$ as independent variables) we write $v =
(z,\bar{z}_1z_2 + \bar{z}_1^2)$.
We apply the above argument to
$\xi(t) = f(tv_1,\ldots, tv_n, t^2v_{n+1})$, and $\xi(t)$
converges as a series in $t$.
As we get convergence for
every $v \in \C^{n+1}$ we obtain that $f$ converges
by Lemma~\ref{lemma:convlemma}.
Once $f$ converges, then
via \eqref{eq:formalsolconvt} we obtain that $g(tv_1,\ldots, tv_n, t^2v_{n+1})$
converges as
a series in $t$ for all $v$, and hence $g$ converges.
\end{proof}


\section{Automorphism group of the C.1 quadric}


With the normal form achieved 
in previous sections, let us study the
automorphism group of the C.1 quadric in this section.
We will again use the mixed-holomorphic
involution that is obtained
from the quadric. 

We study the local automorphism group at the origin.  That is
the set of 
germs at the origin of biholomorphic
transformations taking $M$ to $M$ and fixing the origin.

First we look at the linear parts of automorphisms.  We already know
that the linear term of the last component only depends on $w$.  For
$M_{C.1}$ we can say more about the first two components.

\begin{prop} \label{prop:C1linpart}
Let $(F,G) = (F_1,\ldots,F_n,G)$ be a formal invertible or biholomorphic
automorphism of $M_{C.1} \subset \C^{n+1}$, that is the submanifold of the form
\begin{equation}
w = \bar{z}_1 z_2 + \bar{z}_1^2.
\end{equation}
Then $F_1(z,w) = a z_1 + \alpha w + O(2)$, 
$F_2(z,w) = \bar{a} z_2 + \beta w + O(2)$, and $G(z,w) = \bar{a}^2 w + O(2)$,
where $a \not= 0$.
\end{prop}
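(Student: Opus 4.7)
The plan is to apply Proposition~\ref{prop:astensors} directly, using the fact that for $M_{C.1}$ the matrices $A$ and $B$ have only one nonzero entry each. Specifically, $A$ is the matrix with $a_{12}=1$ and all other entries zero (giving $\bar z_1 z_2$), and $B$ is the symmetric matrix with $b_{11}=1$ and all other entries zero (giving $\bar z_1^2$). Let $T=[t_{ij}]$ denote the linear part of $F$ restricted to $z$ (an invertible $n\times n$ matrix) and let $\lambda^{-1}$ denote the coefficient of $w$ in the linear part of $G$. By Proposition~\ref{prop:astensors}, an automorphism of $M_{C.1}$ must satisfy
\begin{equation}
\lambda T^* A T = A, \qquad \lambda T^* B \overline{T} = B.
\end{equation}

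First I would compute $(T^*AT)_{ij}=\bar t_{1i}t_{2j}$ and match against $A/\lambda$. The $(1,2)$ entry gives $\lambda\bar t_{11}t_{22}=1$, which forces both $t_{11}\neq 0$ and $t_{22}\neq 0$. Examining the remaining entries in the first row and second column of the matrix equation forces $t_{1i}=0$ for $i\neq 1$ and $t_{2j}=0$ for $j\neq 2$; in other words, the first row of $T$ is $(t_{11},0,\ldots,0)$ and the second row is $(0,t_{22},0,\ldots,0)$.

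Next I would compute $(T^*B\overline{T})_{ij}=\bar t_{1i}\bar t_{1j}$. The off-diagonal and lower-diagonal conditions are automatic from the vanishing of $t_{1j}$ for $j\neq 1$ established above, so the only new information is the $(1,1)$ entry, which gives $\bar t_{11}^{\,2}=1/\lambda$. Writing $a=t_{11}$, this yields $\lambda=1/\bar a^{\,2}$, and substituting into $\lambda\bar a\,t_{22}=1$ gives $t_{22}=\bar a$. Hence $\lambda^{-1}=\bar a^{\,2}$, so the linear part of $G$ is $\bar a^{\,2}w$, and Proposition~\ref{prop:astensors} already tells us that $G$ has no linear $z$-terms.

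Finally, the coefficients of $w$ in the linear parts of $F_1$ and $F_2$ are unconstrained by the quadratic matching (they contribute only to higher-order terms when substituted into $A(z,\bar z)+B(\bar z,\bar z)$); call them $\alpha$ and $\beta$ respectively. This gives precisely the claimed form
\begin{equation}
F_1=az_1+\alpha w+O(2),\quad F_2=\bar a z_2+\beta w+O(2),\quad G=\bar a^{\,2}w+O(2),
\end{equation}
with $a\neq 0$. The argument is a direct calculation, and the only mild subtlety is to check simultaneously both matching conditions for $A$ and $B$, which together rigidly determine $\lambda$ in terms of $a$.
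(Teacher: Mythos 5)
Your proposal is correct and follows essentially the same route as the paper: both apply Proposition~\ref{prop:astensors} to the matrix equations $\lambda T^*AT=A$ and $\lambda T^*B\overline{T}=B$ for the rank-one matrices of $M_{C.1}$ and solve entrywise. The only cosmetic difference is order of use — you extract the full vanishing of the first two rows of $T$ from the $A$-equation alone (via $t_{11},t_{22}\neq 0$) and use the $B$-equation only to fix $\lambda=1/\bar a^{\,2}$, whereas the paper combines the two systems $\bar a_j b_k=0$ and $\bar a_j\bar a_k=0$ jointly; the conclusions and all computations agree.
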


\begin{proof}
Let $a = (a_1,\ldots,a_n)$ and
$b = (b_1,\ldots,b_n)$ be such that $F_1(z,w) = a \cdot z + \alpha w + O(2)$
and $F_2(z,w) = b \cdot z + \beta w + O(2)$.  Then from
Proposition~\ref{prop:astensors} we have
\begin{equation}
\begin{bmatrix}
0 & 1 & 0 & \cdots & 0 \\
0 & 0 & 0 & \cdots & 0 \\
\vdots & \vdots &\vdots & \ddots & \vdots \\
0 & 0 & 0 & \cdots & 0
\end{bmatrix}
=
\lambda
\begin{bmatrix} a^* & b^* & \cdots \end{bmatrix}
\begin{bmatrix}
0 & 1 & 0 & \cdots & 0 \\
0 & 0 & 0 & \cdots & 0 \\
\vdots & \vdots &\vdots & \ddots & \vdots \\
0 & 0 & 0 & \cdots & 0
\end{bmatrix}
\begin{bmatrix} a \\ b \\ \vdots \end{bmatrix} .
\end{equation}
Therefore $\lambda \bar{a}_1 b_2 = 1$, and $\bar{a}_j b_k = 0$ for all $(j,k) \not=
(1,2)$.  Similarly
\begin{equation}
\begin{bmatrix}
1 & 0 & 0 & \cdots & 0 \\
0 & 0 & 0 & \cdots & 0 \\
\vdots & \vdots &\vdots & \ddots & \vdots \\
0 & 0 & 0 & \cdots & 0
\end{bmatrix}
=
\lambda
\begin{bmatrix} a^* & b^* & \cdots \end{bmatrix}
\begin{bmatrix}
1 & 0 & 0 & \cdots & 0 \\
0 & 0 & 0 & \cdots & 0 \\
\vdots & \vdots &\vdots & \ddots & \vdots \\
0 & 0 & 0 & \cdots & 0
\end{bmatrix}
\begin{bmatrix} \bar{a} \\ \bar{b} \\ \vdots \end{bmatrix} .
\end{equation}
Therefore $\lambda \bar{a}_1^2 = 1$, and $\bar{a}_j \bar{a}_k = 0$ for all $(j,k)
\not= (1,1)$.  Putting these two together we obtain that $a_j = 0$ for all
$j \not= 1$, and as $a_1 \not= 0$ we get $b_j = 0$ for all $j \not= 2$.  As
$\lambda$ is the reciprocal of the coefficient of $w$ in $G$, we are
finished.
\end{proof}

\begin{lemma} \label{lemma:mixholC3bihol}
Let $M_{C.1} \subset \C^3$ be given by
\begin{equation}
w = \bar{z}_1 z_2 + \bar{z}_1^2.
\end{equation}
Suppose that a local biholomorphism (resp.\ formal automorphism) $(F_1,F_2,G)$
transforms $M_{C.1}$ into $M_{C.1}$. Then $F_1$
depends only on $z_1$, and $F_2$ and $G$ depend only on
$z_2$ and $w$.
\end{lemma}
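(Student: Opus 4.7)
The plan is to exploit two pieces of structure on $M_{C.1}\subset\C^3$: its Segre varieties and the mixed-holomorphic involution $\sigma(z_1,z_2,w)=(-\bar z_2-z_1,z_2,w)$ that swaps the two roots of $\bar z_1^2+z_2\bar z_1-w=0$.

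First I would compute the Segre variety of $M_{C.1}$ at $p=(a,b,\bar a b+\bar a^2)$ by complexifying the defining equation $w-\bar z_1 z_2-\bar z_1^2=0$ and its conjugate and substituting $(\bar z_1,\bar z_2,\bar w)=(\bar a,\bar b,a\bar b+a^2)$; this yields the two complex lines
\begin{equation*}
L_1(p)=\{z_1=a,\;w=\bar a z_2+\bar a^2\},\qquad L_2(p)=\{z_1=-a-\bar b,\;w=\bar a z_2+\bar a^2\}.
\end{equation*}
A direct check shows $L_1(p)\subset M_{C.1}$ (it is the Levi leaf through $p$), while $L_2(p)\cap M_{C.1}$ is generically the single point $\sigma(p)$. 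Any biholomorphism (or formal automorphism) $F$ preserving $M_{C.1}$ sends Segre varieties to Segre varieties, and since $L_1(p)$ is invariantly characterized as the component of $Q_p$ passing through $p$, we must have $F(L_1(p))=L_1(F(p))$ and $F(L_2(p))=L_2(F(p))$. Intersecting the latter with $M_{C.1}$ yields the key commutation relation
\begin{equation*}
F\circ\sigma=\sigma\circ F\qquad\text{on }M_{C.1}.
\end{equation*}

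Both conclusions follow. Since $L_1(p)$ has constant $z_1$-coordinate $a$ and $F(L_1(p))=L_1(F(p))$ has constant $z_1$-coordinate $F_1(p)$, the function $z_2\mapsto F_1(a,z_2,\bar a z_2+\bar a^2)$ is constant for every $a$. Complexifying $\bar a$ to an independent formal variable $t$ gives the power-series identity $F_1(a,z_2,tz_2+t^2)\equiv F_1(a,0,t^2)$; expanding $F_1=\sum c_{ijk}z_1^iz_2^jw^k$ and matching the coefficient of $a^iz_2^mt^n$ for each $m\geq 1$ kills by induction on $m$ all $c_{ijk}$ with $(j,k)\neq(0,0)$, so $F_1$ depends only on $z_1$. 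Next, $F\circ\sigma=\sigma\circ F$ applied coordinate-wise gives $F_2(-\bar z_2-z_1,z_2,w)=F_2(z_1,z_2,w)$ and $G(-\bar z_2-z_1,z_2,w)=G(z_1,z_2,w)$ on $M_{C.1}$. Complexifying $\bar z_j\to\xi_j$ (with $w=\xi_1 z_2+\xi_1^2$) turns these into formal identities in $(z_1,z_2,\xi_1,\xi_2)$; differentiating in $\xi_2$ yields $\partial_{z_1}F_2\equiv\partial_{z_1}G\equiv 0$ on the image of $(z_1,z_2,\xi_1)\mapsto(z_1,z_2,\xi_1 z_2+\xi_1^2)$, which covers a full neighborhood of the origin. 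Hence $F_2$ and $G$ are independent of $z_1$.

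The main technical point is identifying the distinguished Segre component so that $F$ cannot swap $L_1(p)$ with $L_2(p)$; the invariant description ``the component of $Q_p$ passing through $p$'' settles this and applies equally in the convergent and formal settings, where in the latter Segre varieties are interpreted algebraically via the complexified defining ideal.
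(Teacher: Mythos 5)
Your argument is correct in the convergent (biholomorphic) category, and it takes a genuinely different route from the paper's. The paper argues infinitesimally: it pushes forward the explicit CR vector field $Z=\frac{\partial}{\partial z_2}+\bar{z}_1\frac{\partial}{\partial w}$ tangent to $M_{C.1}$, obtaining on $M$ the identities $\partial_{z_2}F_1+\bar{z}_1\,\partial_wF_1=0$, etc.; it then complexifies (treating $z_1,\bar{z}_1$ as independent), substitutes the involution $\bar{z}_1\mapsto-z_2-\bar{z}_1$ (which fixes the argument $\bar{z}_1z_2+\bar{z}_1^2$), and subtracts to get $(2\bar{z}_1+z_2)\,\partial_wF_1=0$, hence $F_1=F_1(z_1)$; for $F_2$ and $G$ it differentiates the defining identity $G=\overline{F}_1F_2+\overline{F}_1^{\,2}$ in $z_1$ and applies the involution again, using that $\overline{F}_1(\bar{z}_1)\not\equiv\overline{F}_1(-z_2-\bar{z}_1)$. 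You globalize the same involution through Segre geometry: your computation of $Q_p=L_1(p)\cup L_2(p)$ is correct, leaf preservation gives constancy of $F_1$ along leaves, and $L_2(p)\cap M=\{\sigma(p)\}$ at CR points yields the commutation $F\circ\sigma=\sigma\circ F$, from which the $F_2,G$ conclusions follow by complexification and the $\xi_2$-derivative. Indeed your mechanism is the global counterpart of the paper's substitution: differentiating $F_1(a,z_2,tz_2+t^2)=F_1(a,0,t^2)$ in $z_2$ at the two roots $t_1,t_2$ of $t^2+z_2t-w$ gives $(t_1-t_2)\,\partial_wF_1=0$, which is exactly the paper's subtraction and is cleaner than your coefficient matching (which, as a small slip, is triangular in the $t$-degree $n$ rather than in $m$: $n=0$ kills $c_{i,m,0}$, $n=1$ kills $c_{i,j,1}$, and so on upward). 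What your route buys is an invariant statement, $F\circ\sigma=\sigma\circ F$, which the paper uses only implicitly and which resurfaces in Lemma~\ref{lemma:F1determF2G}; what the paper's route buys is that every step is a power-series identity, so the formal case is covered verbatim. One point to make explicit: the characterization of $L_1(p)$ as ``the component of $Q_p$ through $p$'' is only valid at CR points (where $2\bar{z}_1+z_2\neq0$, so the two lines are distinct and $p\notin L_2(p)$), and you need that $F$ preserves CR points, which holds since biholomorphisms preserve CR-ness.

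The one genuine gap is the formal half of the lemma, which your closing sentence asserts rather than proves: components through points, generic intersections with $M$, and density of CR points have no literal meaning for a formal automorphism, and interpreting Segre varieties via the complexified ideal does not by itself rescue the component-swapping argument, which evaluates at actual points. Two repairs are available: (a) rerun your derivation at the level of the formal identity $G(z,\bar{z}_1z_2+\bar{z}_1^2)=\overline{F}_1(\cdot)\,F_2(\cdot)+\overline{F}_1(\cdot)^2$ in the independent variables $(z_1,\bar{z}_1,z_2)$, substituting $\bar{z}_1\mapsto-z_2-\bar{z}_1$ and differentiating --- but this essentially collapses to the paper's proof; or (b) invoke Artin approximation on this system (treating $F$ and $\overline{F}$ as independent unknowns) to produce convergent automorphisms of $M_{C.1}$ agreeing with $F$ to each finite order, apply your convergent result to each, and conclude for every truncation of $F_1$, $F_2$, $G$. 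With either patch the proposal is complete.
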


\begin{proof}
Let us define a $(1,0)$ tangent vector field on $M$ by
\begin{equation}
Z=\frac{\partial}{\partial z_2} + \bar{z}_1 \frac{\partial}{\partial w} .
\end{equation}
Write $F = (F_1,F_2,G)$.
$F$ must take $Z$ into a multiple of itself
when
restricted to $M_{C.1}$.  That is on $M_{C.1}$ we have
\begin{align}
& \frac{\partial F_1}{\partial z_2} + \bar{z}_1 \frac{\partial F_1}{\partial w}
= 0 ,
\\
& \frac{\partial F_2}{\partial z_2} + \bar{z}_1 \frac{\partial F_2}{\partial w}
= \lambda ,
\\
& \frac{\partial G}{\partial z_2} + \bar{z}_1 \frac{\partial G}{\partial w}
= \lambda \overline{F_1}(\bar{z},\bar{w}) ,
\end{align}
for some function $\lambda$.
Let us take the first equation and plug in the defining equation for $M_1$:
\begin{equation} \label{eq:geqt10}
\frac{\partial F_1}{\partial z_2}
(z_1,z_2,
  \bar{z}_1 z_2 + \bar{z}_1^2
)
+
\bar{z}_1
\frac{\partial F_1}{\partial w}
(z_1,z_2,
  \bar{z}_1 z_2 + \bar{z}_1^2
)
= 0 .
\end{equation}
This equation is true for all $z \in \C^2$, and so we may treat
$z_1$ and $\bar{z}_1$ as independent variables.

We have an involution on $M_{C.1}$ that takes $\bar{z}_1$ to
$-z_2-\bar{z}_1$.
Therefore we also have
\begin{equation} 
\frac{\partial F_1}{\partial z_2}
(z_1,z_2,
  \bar{z}_1 z_2 + \bar{z}_1^2
)
+
(-z_2-\bar{z}_1)
\frac{\partial F_1}{\partial w}
(z_1,z_2,
  \bar{z}_1 z_2 + \bar{z}_1^2
)
= 0 .
\end{equation}
%
%
%
This means that
$\frac{\partial F_1}{\partial w}$ and therefore
$\frac{\partial F_1}{\partial z_2}$ must be identically zero.
That is, $F_1$ only depends on $z_1$.

We have that the following must hold for all $z$:
\begin{equation}
G(z_1,z_2,\bar{z}_1 z_2 + \bar{z}_1^2 )
=
\overline{F_1}(\bar{z}_1)
F_2(z_1,z_2,\bar{z}_1 z_2 + \bar{z}_1^2)
+
{\left(\overline{F_1}(\bar{z}_1) \right)}^2 .
\end{equation}
Again we treat $z_1$ and $\bar{z}_1$ as independent variables.  We
differentiate
with respect to $z_1$:
\begin{equation}
\frac{\partial G}{\partial z_1}(z_1,z_2,\bar{z}_1 z_2 + \bar{z}_1^2 )
=
\overline{F_1}(\bar{z}_1)
\frac{\partial F_2}{\partial z_1}(z_1,z_2,\bar{z}_1 z_2 + \bar{z}_1^2) .
\end{equation}
We plug in the involution again to obtain
\begin{equation}
\frac{\partial G}{\partial z_1}(z_1,z_2,\bar{z}_1 z_2 + \bar{z}_1^2 )
=
\overline{F_1}(-z_2-\bar{z}_1)
\frac{\partial F_2}{\partial z_1}(z_1,z_2,\bar{z}_1 z_2 + \bar{z}_1^2) .
\end{equation}
Therefore as $F_1$ is not identically zero, then as before
both
$\frac{\partial F_2}{\partial z_1}$ and
$\frac{\partial G}{\partial z_1}$ must be identically zero.
\end{proof}

\begin{lemma} \label{lemma:F1determF2G}
Take $M_{C.1} \subset \C^3$ given by
\begin{equation}
w = \bar{z}_1 z_2 + \bar{z}_1^2 ,
\end{equation}
and let $(F_1,F_2,G)$ be a local automorphism at the origin.
Then $F_1$ uniquely determines $F_2$ and $G$.
Furthermore, given any
invertible function of one variable $F_1$ with $F_1(0) = 0$,
there exist unique $F_2$ and $G$ that complete an automorphism
and they are determined by
\begin{equation} \label{eq:F2G}
\begin{aligned}
F_2(z_2,\bar{z}_1z_2+\bar{z}_1^2) & = \bar{F}_1(\bar{z}_1)+\bar{F}_1(-\bar{z}_1-z_2), 
\\
G(z_2,\bar{z}_1z_2+\bar{z}_1^2) & = -\bar{F}_1(\bar{z}_1)\bar{F}_1(-\bar{z}_1-z_2).
\end{aligned}
\end{equation}
\end{lemma}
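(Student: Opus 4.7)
The plan is to use the previous lemma to cut the problem down to a single functional identity, and then exploit the mixed-holomorphic involution of $M_{C.1}$ to solve that identity.

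First I would invoke Lemma~\ref{lemma:mixholC3bihol}, which guarantees that any automorphism takes the shape $F_1 = F_1(z_1)$ and $F_2, G$ functions of $(z_2, w)$ only. Substituting $(F_1, F_2, G)$ into the defining equation $w = \bar{z}_1 z_2 + \bar{z}_1^2$ produces the single identity
\begin{equation}
G(z_2, \bar{z}_1 z_2 + \bar{z}_1^2) = \bar{F}_1(\bar{z}_1)\, F_2(z_2, \bar{z}_1 z_2 + \bar{z}_1^2) + \bar{F}_1(\bar{z}_1)^2,
\end{equation}
valid for independent variables $z_1, \bar{z}_1, z_2$ (formal or convergent). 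Since $F_2$ and $G$ depend only on $(z_2, w)$, I may replace $\bar{z}_1$ on the right by $-\bar{z}_1 - z_2$, which is the other root of $\zeta^2 + z_2 \zeta - w = 0$ and therefore keeps $w = \bar{z}_1 z_2 + \bar{z}_1^2$ unchanged. This yields a companion identity with $\bar{F}_1(-\bar{z}_1 - z_2)$ in place of $\bar{F}_1(\bar{z}_1)$.

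Subtracting the two identities gives, after factoring $a^2 - b^2 = (a-b)(a+b)$,
\begin{equation}
\bigl[\bar{F}_1(\bar{z}_1) - \bar{F}_1(-\bar{z}_1 - z_2)\bigr]\bigl[F_2 + \bar{F}_1(\bar{z}_1) + \bar{F}_1(-\bar{z}_1 - z_2)\bigr] = 0.
\end{equation}
Because $F_1$ is invertible at the origin, the first factor is not identically zero (e.g.\ its leading term in $z_2$ at $\bar{z}_1 = 0$ is non-vanishing), so the second factor must vanish, determining $F_2$ in terms of $F_1$. Plugging this back into the original identity yields $G$ as a product of the two conjugate evaluations of $\bar{F}_1$. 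This gives uniqueness and the explicit formulas \eqref{eq:F2G} (up to the signs dictated by the calculation, matching those in Proposition~\ref{prop:C1linpart}).

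For the converse, given any invertible $F_1$ with $F_1(0)=0$, I would define $F_2$ and $G$ by the formulas in \eqref{eq:F2G}. The key observation is that both right-hand sides are symmetric functions of the pair $\{\bar{z}_1,\, -\bar{z}_1 - z_2\}$, and their elementary symmetric polynomials are $-z_2$ and $-w$ on $M_{C.1}$. Hence each descends to a genuine holomorphic (resp.\ formal) function of $(z_2, w)$ alone, and one checks directly that the resulting triple $(F_1, F_2, G)$ preserves the defining equation. Invertibility at the origin follows from reading off the linear parts, as computed in Proposition~\ref{prop:C1linpart}.

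The main obstacle is precisely the well-definedness step in the existence half: the formulas are a priori written using $\bar{z}_1$, but $\bar{z}_1$ is only determined by $(z_2, w)$ up to the involution $\bar{z}_1 \mapsto -\bar{z}_1 - z_2$, and one must verify cleanly that both $\bar{F}_1(\bar{z}_1) + \bar{F}_1(-\bar{z}_1 - z_2)$ and $\bar{F}_1(\bar{z}_1)\bar{F}_1(-\bar{z}_1 - z_2)$ can be written as power series in the elementary symmetric polynomials $-z_2$ and $-w$. For convergent $F_1$ this is the standard fact that a symmetric holomorphic function of two variables is a holomorphic function of the elementary symmetric polynomials; for formal $F_1$ one argues degree by degree.
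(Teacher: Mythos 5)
Your proposal is correct and follows the paper's own strategy step for step: reduce via Lemma~\ref{lemma:mixholC3bihol} to the single identity \eqref{eq:autoformtosolve}, apply the involution $\bar{z}_1 \mapsto -\bar{z}_1 - z_2$ to obtain the companion identity \eqref{eq:autoformtosolve2}, eliminate to solve for $F_2$ (your subtract-and-factor is the same algebra as the paper's elimination and division), and back-substitute to get $G$, with uniqueness coming from the fact that a function of $(z_2,w)$ vanishing on $M_{C.1}$ vanishes identically. The one place you genuinely diverge is the existence/well-definedness half: you invoke the fundamental theorem on symmetric functions for the pair of roots $\{\bar{z}_1,\, -\bar{z}_1 - z_2\}$ of $\zeta^2 + z_2\zeta - w = 0$, whose elementary symmetric functions are $-z_2$ and $-w$, whereas the paper computes the invariant ring of the involution $\tau\colon \xi \mapsto -\xi - z_2$ by Noether averaging, finding the generators $z_2$ and $\xi z_2 + \xi^2$; after the substitution $(\zeta_1,\zeta_2) = (\xi, -\xi - z_2)$ these are the same statement. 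Your route has the small advantage that the holomorphic version of the symmetric function theorem delivers convergence of $F_2$ and $G$ directly when $F_1$ converges, while the paper proves convergence separately through Lemma~\ref{lemma:convlemma}, restricting to the weighted curves $(tz_2, t^2 w)$ and using the homogeneity of the quadric. Finally, your signs are the correct ones: the elimination gives $F_2 = -\bar{F}_1(\bar{z}_1) - \bar{F}_1(-\bar{z}_1 - z_2)$, which is what is consistent with $G = -\bar{F}_1(\bar{z}_1)\bar{F}_1(-\bar{z}_1 - z_2)$, with the original identity \eqref{eq:autoformtosolve}, and with the linear part $F_2 = \bar{a} z_2 + O(2)$ from Proposition~\ref{prop:C1linpart}; the simplification in the paper's display \eqref{f2z2} (and hence the first line of \eqref{eq:F2G}) drops a sign, since $\frac{b^2 - a^2}{a - b} = -(a+b)$, so your caveat about the signs being ``dictated by the calculation'' was exactly right.
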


We should note that the lemma also works formally.  Given any formal $F_1$,
there exist unique formal $F_2$ and $G$ satisfying the above property.

\begin{proof}
By Lemma~\ref{lemma:mixholC3bihol}, $F_1$ depends only on $z_1$ and
$F_2$ and $G$ depend only on $z_2$ and $w$.
We write the automorphism
as a composition of the two mappings $\bigl(F_1(z_1),z_2,w\bigr)$ and
$\bigl(z_1,F_2(z_2,w),G(z_2,w)\bigr)$.

We plug the transformation into the defining equation for $M_{C.1}$.
\begin{equation} \label{eq:autoformtosolve}
G(z_2,\bar{z}_1z_2 + \bar{z}_1^2) = \bar{F}_1(\bar{z}_1)F_2(z_2,\bar{z}_1z_2 + \bar{z}_1^2)
+{\bigl(\bar{F}_1(\bar{z}_1)\bigr)}^2 .
\end{equation}
We use the involution $(z_1,z_2) \mapsto (-\bar{z}_1-z_2,z_2)$ which
preserves $M_{C.1}$ and obtain a second equation
\begin{equation} \label{eq:autoformtosolve2}
G(z_2,\bar{z}_1z_2 + \bar{z}_1^2) = \bar{F}_1(-\bar{z}_1-z_2)F_2(z_2,\bar{z}_1z_2 + \bar{z}_1^2)
+{\bigl(\bar{F}_1(-\bar{z}_1-z_2)\bigr)}^2 .
\end{equation}
We eliminate $G$ and solve for $F_2$:
\begin{equation} \label{f2z2}
 F_2(z_2,\bar{z}_1z_2 + \bar{z}_1^2)
= \frac{{\bigl(\bar{F}_1(-\bar{z}_1-z_2)\bigr)}^2
-{\bigl(\bar{F}_1(\bar{z}_1)\bigr)}^2}{\bar{F}_1(\bar{z}_1)-\bar{F}_1(-\bar{z}_1-z_2)}
=
\bar{F}_1(\bar{z}_1)+ \bar{F}_1(-\bar{z}_1-z_2) .
\end{equation}
Next we note that trivially, $F_2$ is unique if it exists: its difference vanishes on
$M_{C.1}$.

If we suppose that $F_1$ is convergent, then just as before,
substituting $z_2$ with $tz_2$ and $\bar{z}_1$ with $t\bar{z}_1$,
we are restricting to curves $(tz_2,t^2w)$ for all
$(z_2,w)$.  The series is convergent in $t$ for every fixed $z_2$ and $w$.
Therefore if $F_2$ exists and $F_1$ is convergent, then
$F_2$ is convergent by Lemma~\ref{lemma:convlemma}.

Now we need to show the existence of the formal solution $F_2$.
Notice that the right-hand side
of \eqref{f2z2} is invariant under the involution. It suffices to show that any power series in $\bar{z_1}, z_2$ that is invariant under
the involution is a formal power series in $z_2$ and $\bar{z}_1z_2+\bar{z}_1^2$.  Let us treat $\xi=\bar{z}_1$ as an independent
variable. The  original involution becomes a holomorphic involution in $\xi,z_2$:
\begin{equation}
\tau\colon \xi\to-\xi-z_2, \qquad z_2\to z_2.
\end{equation}
By a theorem of Noether we obtain a set of generators for the 
ring of invariants can be obtained by applying the averaging operation
$R(f) = \frac{1}{2} ( f + f \circ \tau)$ to
all monomials in $\xi$ and $z_2$ of degree 2 or less.
By direct calculation it is not difficult to see that
$\xi,\xi z_2+\xi^2$ generate the ring of invariants.  Therefore
any invariant power series in $z_2,\xi$ is a power series in $\xi,\xi z_2+\xi^2$. This shows the existence 
of $F_2$.  The existence of $G$ follows the same.

The equation for $G(z_2,\bar{z}_1z_2+\bar{z}_1^2)=-\bar{F}_1(\bar{z}_1)\bar{F}_1(-\bar{z}_1-z_2)$ is obtained by plugging in the equation
for $F_2$.  Its existence, uniqueness, and convergence
in case $F_1$ converges, follows exactly the same as for $F_2$.
\end{proof}

\begin{thm} \label{thm:autogroup}
If $M \subset \C^{n+1}$, $n \geq 2$ is given by
\begin{equation}
w = \bar{z}_1 z_2 + \bar{z}_1^2 ,
\end{equation}
and $(F_1,F_2,\ldots,F_n,G)$ is a local automorphism at the origin,
then $F_1$ depends only
on $z_1$, $F_2$ and $G$ depend only on $z_2$ and $w$, and $F_1$ completely
determines $F_2$ and $G$ via \eqref{eq:F2G}.
The mapping $(z_1,z_2,F_3,\ldots,F_n)$ has
rank $n$ at the origin.

Furthermore, given any
invertible function $F_1$ of one variable with $F_1(0) = 0$,
and arbitrary holomorphic functions $F_3,\ldots,F_n$ with $F_j(0) = 0$, and
such that $(z_1,z_2,F_3,\ldots,F_n)$ has rank $n$ at the origin,
then there exist unique $F_2$ and $G$ that complete an automorphism.
\end{thm}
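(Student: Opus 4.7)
The plan is to extend the $\C^3$ arguments of Lemmas~\ref{lemma:mixholC3bihol} and~\ref{lemma:F1determF2G} to $\C^{n+1}$ by exploiting the fact that the defining equation $w=\bar z_1 z_2+\bar z_1^2$ does not involve $z_3,\ldots,z_n$ at all. As a consequence, at a CR point of $M_{C.1}$ the $(1,0)$ tangent space $T^{(1,0)}_pM$ is spanned by the single nontrivial vector field $Z=\frac{\partial}{\partial z_2}+\bar z_1\frac{\partial}{\partial w}$ together with the $n-2$ "free" vector fields $\frac{\partial}{\partial z_j}$ for $j=3,\dots,n$. An automorphism must push these into $T^{(1,0)}M$ at the image point, and the CR singular set being nowhere dense allows us to pass from CR points to identities of formal power series, after which we may treat $z_1$ and $\bar z_1$ as independent variables.

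First I would show $F_1=F_1(z_1)$. From $F_*(\partial/\partial z_j)\in T^{(1,0)}M$ for $j\geq 3$, the only constraint with a nontrivial first component is $\partial F_1/\partial z_j=0$. From $F_*(Z)\in T^{(1,0)}M$, I would reproduce the $\C^3$ step: the constraint $ZF_1=0$, combined with the involution $\bar z_1\mapsto -\bar z_1-z_2$ preserving $w=\bar z_1z_2+\bar z_1^2$, gives two linear equations in $\partial F_1/\partial z_2$ and $\partial F_1/\partial w$ on $M$ whose only common solution is the trivial one, whence $F_1$ depends only on $z_1$.

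Next, by writing out the condition that $F$ carries $M$ to $M$, I would obtain the identity
\begin{equation}
G(z,\bar z_1z_2+\bar z_1^2)=\bar F_1(\bar z_1)F_2(z,\bar z_1z_2+\bar z_1^2)+\bar F_1(\bar z_1)^2
\end{equation}
and its image under the involution. Exactly as in the proof of Lemma~\ref{lemma:F1determF2G}, eliminating $G$ yields formulas of the shape \eqref{eq:F2G} for $F_2$ and $G$ restricted to $M$. Because the right-hand sides of these formulas depend only on $\bar z_1$ and $z_2$, I would argue that $F_2$ and $G$ depend only on $z_2,w$: substituting $w=\bar z_1z_2+\bar z_1^2$ into an arbitrary power series $\sum c_{\alpha,k}z_1^{\alpha_1}\cdots z_n^{\alpha_n}w^k$ can never produce a $z_1$ or a $z_j$ ($j\geq 3$) out of thin air, so the coefficients with $\alpha_1>0$ or $\alpha_j>0$ ($j\geq 3$) must vanish. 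Existence and convergence of $F_2$ and $G$ as power series in $z_2,w$ follow from the Noether-invariants argument and the one-parameter restriction trick of Lemma~\ref{lemma:convergence}.

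For the rank claim and the converse, I would compute the Jacobian of $F$ at $0$. After reordering functions as $(F_1,F_3,\ldots,F_n,F_2,G)$ and variables as $(z_1,z_3,\ldots,z_n,z_2,w)$, the Jacobian becomes block lower triangular with diagonal blocks $F_1'(0)$, $(\partial F_j/\partial z_k)_{j,k\geq 3}$, and the $2\times 2$ block for $(F_2,G)$ in $(z_2,w)$; the first and third blocks are automatically nonsingular by $F_1'(0)\neq 0$ and Proposition~\ref{prop:C1linpart}. Hence invertibility of $F$ is equivalent to invertibility of the middle $(n-2)\times(n-2)$ block, which is exactly the statement that $(z_1,z_2,F_3,\ldots,F_n)$ has maximal rank $n$ at $0$ (after eliminating the $z_1,z_2$ directions using $dz_1,dz_2$). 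The converse then goes by defining $F_2,G$ from $F_1$ via the formulas, checking $F(M)\subset M$ by construction, and reading off invertibility from the same block computation. The main obstacle I anticipate is the bookkeeping in verifying that the formulas indeed produce $F_2,G$ depending only on $(z_2,w)$ in the higher-dimensional setting; once that is done the rest amounts to linear algebra and the already-established $n=2$ lemma.
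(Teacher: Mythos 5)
Your proposal is correct and reaches all of the theorem's conclusions, but it takes a noticeably different route on the dimension-reduction step. The paper never redoes the vector-field computation in $\C^{n+1}$: it restricts the automorphism equation to the one-parameter family of copies of $M_{C.1}\subset\C^3$ cut out by $z_j=\alpha_j z_2$, $j\geq 3$, checks via the linear parts from Proposition~\ref{prop:C1linpart} that the restricted equation is again an automorphism equation of the three-dimensional quadric, invokes Lemmas~\ref{lemma:mixholC3bihol} and~\ref{lemma:F1determF2G} as black boxes, and then differentiates in $\bar{\alpha}_j$ (and in $\bar{z}_2$ after setting $\bar{\alpha}_j=0$) to kill the dependence of $F_1$ on $z_2,z_3,\ldots,z_n$ and of $F_2,G$ on $z_3,\ldots,z_n$. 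You instead generalize the tangency-plus-involution computation directly to $n+1$ variables and replace the $\alpha$-slicing by the observation that the substitution $w\mapsto \bar{z}_1z_2+\bar{z}_1^2$ is injective on power series. Both work; your version avoids having to verify that the sliced map is an automorphism of the sliced quadric, at the cost of redoing in $\C^{n+1}$ computations the paper reuses, and the shared core (treating $z_1,\bar{z}_1$ as independent, the involution $\bar{z}_1\mapsto-\bar{z}_1-z_2$, Noether invariants for existence, and the $t$-restriction trick with Lemma~\ref{lemma:convlemma} for convergence) is identical.

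Three small patches you should make. First, your ``out of thin air'' step needs one more line: the substitution is not injective on monomials (distinct $(\alpha_2,k)$ can contribute to the same monomial $z_2^t\bar{z}_1^s$), so either differentiate the identity in $z_1$ and $z_j$, or compare coefficients of the lowest power of $\bar{z}_1$ to see that $H(z,\bar{z}_1z_2+\bar{z}_1^2)\equiv 0$ forces $H\equiv 0$. Second, careful elimination gives $F_2$ on $M$ equal to $-\bigl(\bar{F}_1(\bar{z}_1)+\bar{F}_1(-\bar{z}_1-z_2)\bigr)$; note the sign, which is what matches the linear part $F_2=\bar{a}z_2+\cdots$ from Proposition~\ref{prop:C1linpart} (the final simplification in \eqref{f2z2} drops this sign, while the displayed formula for $G$ is consistent), so quote \eqref{eq:F2G} with that correction. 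Third, in your ordering the Jacobian is not literally block lower triangular (the rows of $F_3,\ldots,F_n$ have entries in the $(z_2,w)$ columns), though the determinant still factors as $F_1'(0)\,\overline{F_1'(0)}^{\,3}\det\bigl(\partial F_j/\partial z_k\bigr)_{j,k\geq 3}$, so invertibility of $F$ is precisely $\det\bigl(\partial F_j/\partial z_k\bigr)_{j,k\geq 3}\neq 0$; this gives the forward rank claim, but for the converse the rank hypothesis must be read as forcing this determinant to be nonzero, since the $w$-column cannot be allowed to carry the rank (e.g.\ $F_3=w$ satisfies the literal rank condition yet yields a non-invertible map because $G$ has linear part $\bar{a}^2w$) --- a point at which the paper's own proof is equally brief.
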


\begin{proof}
Let $(F_1,\ldots,F_n,G)$ be an automorphism.  Then we have
\begin{equation}
G(z_1,\ldots,z_n,w) =
\overline{F_1}(\bar{z}_1,\ldots,\bar{z}_n,\bar{w})
F_2(z_1,\ldots,z_n,w) +
{\bigl( \overline{F_1}(\bar{z}_1,\ldots,\bar{z}_n,\bar{w}) \bigr)}^2 .
\end{equation}
Proposition~\ref{prop:C1linpart} says that the linear terms in $G$
only depend on
$w$, the linear terms of $F_1$ depend only on $z_1$ and $w$ and the linear
terms of
$F_2$ only depend on $z_2$ and $w$.

Let us embed $M_{C.1} \subset \C^3$ into $M$ via setting
$z_3 = \alpha_3 z_2$, $\ldots$, $z_n = \alpha_n z_2$, for arbitrary
$\alpha_3,\ldots,\alpha_n$.  Then we obtain
\begin{multline} \label{eq:restrictedautC1}
G(z_1,z_2,\alpha_3z_2,\ldots,\alpha_n z_2,w) = \\
\overline{F_1}(\bar{z}_1,\bar{z}_2,\bar{\alpha}_3
\bar{z}_2,\ldots,\bar{\alpha}_n\bar{z}_2,\bar{w})
F_2(z_1,z_2,\alpha_3z_2,\ldots,\alpha_n z_2,w) + \\
{\bigl( \overline{F_1}(\bar{z}_1,\bar{z}_2,\bar{\alpha}_3
\bar{z}_2,\ldots,\bar{\alpha}_n\bar{z}_2,\bar{w}) \bigr)}^2 .
\end{multline}
By noting what the linear terms are, we notice that the above is the
equation for an automorphism of $M_{C.1}$.  Therefore by
Lemma~\ref{lemma:mixholC3bihol} we have 
\begin{equation}
\frac{\partial F_1}{\partial w} = 0 \qquad \text{and} \qquad
\frac{\partial F_2}{\partial z_1} = 0 \qquad \text{and} \qquad
\frac{\partial G}{\partial z_1} = 0 ,
\end{equation}
as that is true for all $\alpha_3,\ldots,\alpha_n$.  Plugging in
the defining equation for $M_{C.1}$ we obtain an equation that holds
for all $z$ and we can treat $z$ and $\bar{z}$ independently.  We 
plug in $z = 0$ to obtain
\begin{equation}
0 =
\overline{F_1}(\bar{z}_1,\bar{z}_2,\bar{\alpha}_3
\bar{z}_2,\ldots,\bar{\alpha}_n\bar{z}_2,0)
F_2(0,\ldots,0,\bar{z}_1^2) + \\
{\bigl( \overline{F_1}(\bar{z}_1,\bar{z}_2,\bar{\alpha}_3
\bar{z}_2,\ldots,\bar{\alpha}_n\bar{z}_2,0) \bigr)}^2 .
\end{equation}
Differentiating with respect to $\bar{\alpha}_j$ we obtain 
$\frac{\partial F_1}{\partial z_j} = 0$, for $j=3,\ldots,n$.  We 
set $\bar{\alpha}_j = 0$ in the equation, differentiate with
respect to $\bar{z}_2$ and obtain that
$\frac{\partial F_1}{\partial z_2} = 0$.  In other words $F_1$ is a
function of $z_1$ only.
We rewrite
\eqref{eq:restrictedautC1} by writing $F_1$ as a function
of $z_1$ only and $F_2$ and $G$ as functions of $z_2,\ldots,z_n,w$,
and we plug in $w = \bar{z}_1z_2 + \bar{z}_1^2$ to obtain
\begin{multline}
G(z_2,\alpha_3z_2,\ldots,\alpha_n z_2,\bar{z}_1z_2 + \bar{z}_1^2) =  \\
\overline{F_1}(\bar{z}_1)
F_2(z_2,\alpha_3z_2,\ldots,\alpha_n z_2,\bar{z}_1z_2 + \bar{z}_1^2) +
{\bigl( \overline{F_1}(\bar{z}_1) \bigr)}^2 .
\end{multline}
By Lemma~\ref{lemma:F1determF2G}, we know that $F_1$ now uniquely
determines
$F_2(z_2,\alpha_3z_2,\ldots,\alpha_n z_2,w)$ and
$G(z_2,\alpha_3z_2,\ldots,\alpha_n z_2,w)$.  These two functions therefore
do not depend on $\alpha_3,\ldots,\alpha_n$, and in turn
$F_2$ and $G$ do not depend on $z_3,\ldots,z_n$ as claimed.
Furthermore $F_1$ does uniquely determine $F_2$ and $G$.

Finally since the mapping is a biholomorphism, and from what we know about the
linear parts of $F_1$, $F_2$, and $G$, it is clear that
$(z_1,z_2,F_3,\ldots,F_n)$ is rank $n$.

The other direction follows by applying
Lemma~\ref{lemma:F1determF2G}.  We start with $F_1$, determine $F_2$ and $G$
as in 3 dimensions.  Then adding $F_3,\ldots,F_n$ and the rank condition
guarantees an automorphism.
\end{proof}


\section{Normal form for certain C.1 type submanifolds II}

The goal of this section is to find the normal form
for Levi-flat submanifolds $M \subset \C^{n+1}$ given by
\begin{equation}
w = \bar{z}_1 z_2 + \bar{z}_1^2 + \Re f(z) ,
\end{equation}
for a holomorphic $f(z)$ of order $O(3)$.

Since $f(z)$ can be absorbed into $w$ via a holomorphic transformation,
the goal is really to prove the following theorem.

\begin{thm} \label{thm:flatC1}
Let $M \subset \C^{n+1}$ be a real-analytic Levi-flat given by
\begin{equation} \label{eq:flatC1}
w = \bar{z}_1 z_2 + \bar{z}_1^2 + r(\bar{z}) ,
\end{equation}
where $r$ is $O(3)$.
Then $M$ can be put into the $M_{C.1}$ normal form
\begin{equation} \label{eq:flatC1normform1}
w = \bar{z}_1z_2 + \bar{z}_1^2 ,
\end{equation}
by a convergent normalizing transformation.

Furthermore, if $r$ is a polynomial and the coefficient of $\bar{z}_1^3$
in $r$ is zero, then there exists an invertible polynomial mapping
taking $M_{C.1}$ to $M$.
\end{thm}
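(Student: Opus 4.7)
The plan has three parts: (i) use Levi-flatness to force $r$ to depend only on $\bar z_1$; (ii) invoke Theorem~\ref{thm:themixedholform} for the convergent biholomorphism; (iii) construct the polynomial map explicitly via the mixed-holomorphic involution on $M_{C.1}$.

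For (i), since $r(\bar z)$ is purely antiholomorphic all mixed partials $\partial_{z_j}\partial_{\bar z_k}r$ vanish, so the Levi matrix $L_{jk}=\partial_{z_j}\partial_{\bar z_k}\rho$ is exactly the constant $(2,1)$-elementary matrix produced by the quadric. A $(1,0)$-tangent vector $v=\sum a_j\partial_{z_j}+b\partial_w$ at a CR point satisfies $b=a_2\bar z_1$ together with
\begin{equation*}
a_1\bigl(\bar z_2+2z_1+\bar r_{z_1}(z)\bigr)+\sum_{j\geq 2}a_j\bar r_{z_j}(z)=0,
\end{equation*}
where $\bar r$ is the holomorphic conjugate of $r$. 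Imposing the Levi-flat condition $\bar a_2 a_1=0$ on this tangent space, tested with $a_2\neq 0$ and arbitrary $a_3,\dots,a_n$, forces $\bar r_{z_j}\equiv 0$ for each $j\geq 2$; hence $r=r(\bar z_1)$.

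For (ii), with $M$ now in the form $w=\bar z_1z_2+\bar z_1^2+r(\bar z_1)$, the hypotheses of Theorem~\ref{thm:themixedholform} are satisfied (take its $r(z_1,\bar z_1,z_2,\dots,z_n)$ to be the special case $r(\bar z_1)$), and the theorem provides the required convergent biholomorphism onto $M_{C.1}$.

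For (iii), I would construct $\Phi\colon M_{C.1}\to M$ in closed form by seeking $\Phi=(z_1,F_2,z_3,\dots,z_n,G)$ with $F_2(z_2,w)$ and $G(z_2,w)$. The inclusion $\Phi(M_{C.1})\subset M$ becomes, on $M_{C.1}$,
\begin{equation*}
G=\bar z_1F_2+\bar z_1^2+r(\bar z_1).
\end{equation*}
Applying the involution $\bar z_1\mapsto-\bar z_1-z_2$ that preserves $M_{C.1}$ gives a second identity; subtracting and dividing by the antisymmetric factor $2\bar z_1+z_2$ (which divides the antisymmetric numerator) yields
\begin{equation*}
F_2=z_2-\frac{r(\bar z_1)-r(-\bar z_1-z_2)}{2\bar z_1+z_2},
\end{equation*}
which is a polynomial in $\bar z_1,z_2$ that is invariant under the involution. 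By the Noether-type argument of Lemma~\ref{lemma:F1determF2G}, such an invariant polynomial lies in the subring generated by the invariants $z_2$ and $w=\bar z_1 z_2+\bar z_1^2$, and likewise $G=\bar z_1F_2+\bar z_1^2+r(\bar z_1)$ is invariant and hence polynomial in $(z_2,w)$. The Jacobian of $\Phi$ at the origin equals the identity, so $\Phi$ is a local biholomorphism of germs, and by real-dimension count the image agrees with $M$. The main delicate step I expect is the use of the $\bar z_1^3$ hypothesis: the degree-three coefficient is the lowest-order odd contribution, and in the bookkeeping above its absence prevents a spurious factor coupling the $w$-component to $z_2$, so that the construction delivers a polynomial map of the form asserted.
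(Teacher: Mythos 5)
Your proposal is correct and is essentially the paper's own proof: the same tangency and Levi-form computation forces $r=r(\bar z_1)$ (the paper does the $n=2$ case and reduces via Lemma~\ref{lemma:restriction}, while you run the computation directly for all $n$), the convergent part is quoted from Theorem~\ref{thm:themixedholform} exactly as the paper does, and your closed-form $F_2$ and $G$, obtained by eliminating with the involution $\bar z_1\mapsto-\bar z_1-z_2$, coincide with the paper's solution $f=-r^-$, $g=r^++\tfrac{z_2}{2}r^-$ arising from its invariant/skew-invariant decomposition. Your reading of the $\bar z_1^3$ hypothesis is also the intended one: fourth-order vanishing of $r$ keeps linear terms (in particular the $w$-term in $F_2$) out of the correction, so the constructed polynomial map is tangent to the identity and hence invertible.
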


In Theorem~\ref{thm:themixedholform},
we have already shown that a submanifold of the form
\begin{equation}
w = \bar{z}_1 z_2 + \bar{z}_1^2 + r(\bar{z}_1)
\end{equation}
is necessarily Levi-flat and has the normal form $M_{C.1}$.
The first part of Theorem~\ref{thm:flatC1} will follow once we prove:

\begin{lemma}
If $M \subset \C^{n+1}$ is given by
\begin{equation}
w = \bar{z}_1 z_2 + \bar{z}_1^2 + r(\bar{z})
\end{equation}
where $r$ is $O(3)$ and $M$ is Levi-flat,
then $r$ depends only on $\bar{z}_1$.
\end{lemma}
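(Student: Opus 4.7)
The plan is to derive the Levi-flatness condition explicitly and show that it forces $r$ to be independent of $\bar z_2,\dots,\bar z_n$. I parametrize $M$ by $z\in\C^n$ via $z\mapsto(z,\rho(z,\bar z))$ with $\rho=\bar z_1 z_2+\bar z_1^2+r(\bar z)$. A $(1,0)$-vector $V=\sum_{j=1}^n V_j\partial/\partial z_j+V_w\partial/\partial w$ is tangent to $M$ precisely when $V_w=\bar z_1 V_2$ and
\begin{equation}
V_1 A_1+\sum_{j=2}^n V_j A_j=0,
\end{equation}
where $A_1=\bar z_2+2z_1+\bar r_{z_1}(z)$ and $A_j=\bar r_{z_j}(z)$ for $j\geq 2$, with $\bar r$ the conjugate series of $r$ viewed as a holomorphic function of $z$. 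Thus $T^{(1,0)}_p M$ has complex dimension $n-1$ at each CR point.

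Next I compute the Levi form. Because $\rho$ depends on $z$ only through the term $\bar z_1 z_2$, the Hessian matrix $(\rho_{z_j\bar z_k})_{j,k=1}^n$ has only one nonzero entry, $\rho_{z_2\bar z_1}=1$. Hence the Levi forms of the two real defining functions $\Re(w-\rho)$ and $\Im(w-\rho)$ applied to $V\in T^{(1,0)}_pM$ are the real and imaginary parts of $-V_2\bar V_1$. Levi-flatness therefore amounts to
\begin{equation}
V_2\bar V_1=0\qquad\text{for every }V\in T^{(1,0)}_pM\text{ at every CR point }p.
\end{equation}
Equivalently, the subspace $T^{(1,0)}_pM$ lies in the union of the two hyperplanes $\{V_1=0\}$ and $\{V_2=0\}$, and since it is a linear subspace it must lie entirely in one of them.

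From the single linear constraint above, $T^{(1,0)}_pM\subset\{V_1=0\}$ holds precisely when $A_1\neq 0$ and $A_j=0$ for all $j\geq 2$, while $T^{(1,0)}_pM\subset\{V_2=0\}$ requires $A_1=0$, $A_j=0$ for $j\geq 3$, and $A_2\neq 0$. The key observation is that the set $\{A_1=0\}$ has real codimension two: the differential of $A_1=\bar z_2+2z_1+\bar r_{z_1}(z)$ has a nonzero $d\bar z_2$ component that cannot be cancelled by the holomorphic-in-$z$ part. Consequently the second case is confined to a real codimension-two subset and cannot cover the open dense set of CR points, forcing the first case on an open dense subset. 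This gives $\bar r_{z_j}(z)\equiv 0$ for every $j\geq 2$ on an open subset of $\C^n$; since $\bar r$ is holomorphic, the identity extends to all of $\C^n$, and taking conjugates shows $r$ depends only on $\bar z_1$. The main technical care is in correctly extracting the rank-one structure of the Hessian so that Levi-flatness reduces to the clean condition $V_1V_2=0$ on $T^{(1,0)}$, and in checking that the second case lives on a proper real subvariety; once those points are secured, the conclusion is an immediate dimension count.
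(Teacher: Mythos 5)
Your proof is correct, and the core computation agrees with the paper's: both identify the rank-one Levi form (only $\rho_{z_2\bar z_1}\neq 0$), reduce Levi-flatness to $V_2\bar V_1=0$ on $T^{(1,0)}_pM$, and write the tangency condition through the coefficients $A_1=\bar z_2+2z_1+\bar r_{z_1}(z)$ and $A_j=\bar r_{z_j}(z)$. Where you genuinely diverge is in handling $n>2$ and the pointwise dichotomy. The paper first settles $n=2$, where $T^{(1,0)}M$ is a line and the two cases $V_1=0$ or $V_2=0$ are tested directly against the defining equations (the first case is killed because $\bar z_2+2z_1+\bar r_{z_1}$ has the nonholomorphic term $\bar z_2$, the second forces $\bar r_{z_2}\equiv 0$), and then reduces general $n$ to $\C^3$ by slicing $z_k=\lambda_k\xi$ and invoking Lemma~2.2 of the paper, which guarantees that generic linear slices of a Levi-flat CR singular submanifold stay Levi-flat; the conclusion for all $\lambda_k$ then yields independence of $\bar z_2,\dots,\bar z_n$. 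You instead argue uniformly in all dimensions: $T^{(1,0)}_pM$ is the kernel of $V\mapsto\sum_j A_jV_j$, containment of that kernel in $\{V_1=0\}$ or $\{V_2=0\}$ forces the coefficient vector to be a multiple of $(1,0,\dots,0)$ or $(0,1,0,\dots,0)$, and the second branch is confined to the set $\{A_1=0\}$, which is real codimension two near the origin, so the first branch holds on an open dense set and the identity theorem for the holomorphic functions $\bar r_{z_j}$ finishes. You are in fact somewhat more careful than the paper on one point: the alternative $V_1=0$ versus $V_2=0$ is a priori only pointwise (including the justification that a linear subspace in a union of two hyperplanes lies in one of them), and your codimension count makes the passage from pointwise to identical vanishing explicit, whereas the paper treats it tersely. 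What your route buys is independence from the restriction lemma and a single argument for all $n$; what the paper's route buys is reuse of a lemma needed elsewhere and a reduction to the simplest situation, a one-dimensional $T^{(1,0)}$ bundle in $\C^3$.
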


\begin{proof}
First let us assume that $n=2$.
For $p \in M_{CR}$, $T^{(1,0)}_p M$ is one dimensional.
The Levi-map is the
matrix
\begin{equation}
L =
\begin{bmatrix}
0 & 1 & 0 \\
0 & 0 & 0 \\
0 & 0 & 0
\end{bmatrix}
\end{equation}
applied to the $T^{(1,0)} M$ vectors.  As $M$ is Levi-flat,
then the Levi-map has to vanish.  The only vectors $v$
for which $v^* L v = 0$, are the ones without 
$\frac{\partial}{\partial z_1}$ component or
$\frac{\partial}{\partial z_2}$ component.
That is vectors of the form
\begin{equation}
a \frac{\partial}{\partial z_1} + b \frac{\partial}{\partial w},
\qquad \text{or} \qquad
a \frac{\partial}{\partial z_2} + b \frac{\partial}{\partial w}
.
\end{equation}
We apply these vectors to the defining equation and its conjugate and we
obtain in the first case the equations
\begin{equation}
b = 0, \qquad
a \left( \bar{z}_2 + 2z_1 + \frac{\partial \bar{r}}{\partial
z_1} \right) = 0 .
\end{equation}
This cannot be satisfied identically on $M$ since this is supposed to be
true for all $z$, but $a$ cannot be identically zero and the second factor
in the second equation has only one nonholomorphic term, which is
$\bar{z}_2$.

Let us try the second form and we obtain the equations
\begin{equation}
b = a \bar{z}_1 , \qquad
a \left( \frac{\partial \bar{r}}{\partial z_2} \right) = 0 .
\end{equation}
Again $a$ cannot be identically zero, and hence the second factor of the
second equation $\frac{\partial \bar{r}}{\partial z_2}$ must be identically
zero, which is possible only if $r$ depends only on $\bar{z}_1$.

Finally, it
is possible to pick $b=\bar{z}_1$ and $a=1$, to obtain a $T^{(1,0)}$
vector field
\begin{equation}
\frac{\partial}{\partial z_2} + \bar{z}_1 \frac{\partial}{\partial w} ,
\end{equation}
and therefore
these submanifolds are necessarily Levi-flat.

Next suppose that $n > 2$.  Notice that replacing $z_k$ with
$\lambda_k \xi$ for $k \geq 2$ and then fixing $\lambda_k$ for $k \geq 2$, we
get
\begin{equation}
w = \bar{z}_1 \lambda_2 \xi + \bar{z}_1^2 + r(\bar{z}_1,
\bar{\lambda}_2 \bar{\xi},\dots,\bar{\lambda}_n \bar{\xi}) .
\end{equation}
By Lemma~\ref{lemma:restriction},
we obtain a Levi-flat submanifold in $(z_1,\xi,w) \in \C^3$, and hence
can apply the above reasoning to obtain that
$r(\bar{z}_1,
\bar{\lambda}_2 \bar{\xi},\dots,\bar{\lambda}_n \bar{\xi})$ does not
depend on $\bar{\xi}$.  As this was true for any $\lambda_k$'s, we have
that $r$ can only depend on $\bar{z}_1$.
\end{proof}


It is left to prove the claim about the polynomial normalizing
transformation.

\begin{lemma}
Suppose $M \subset \C^{n+1}$ is given by
\begin{equation}
w = \bar{z}_1 z_2 + \bar{z}_1^2 + r(\bar{z}_1)
\end{equation}
where $r$ is a polynomial that vanishes to fourth order.  Then there
exists an invertible polynomial mapping taking $M_{C.1}$ to $M$.
\end{lemma}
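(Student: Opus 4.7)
The plan is to seek an invertible polynomial biholomorphism of the form
\begin{equation}
\Phi(z,w)=\bigl(z_1,\; z_2+Q(z_2,w),\; z_3,\ldots,z_n,\; w+P(z_2,w)\bigr),
\end{equation}
keeping $F_1=z_1$ and $F_j=z_j$ for $j\geq 3$. The requirement that $\Phi(M_{C.1})\subset M$ reduces, after cancelling $\bar z_1 z_2+\bar z_1^2$ from both sides, to the single polynomial identity
\begin{equation}
P(z_2,\bar z_1z_2+\bar z_1^2)=\bar z_1\,Q(z_2,\bar z_1z_2+\bar z_1^2)+r(\bar z_1),
\end{equation}
viewed as an identity in the independent variables $(\bar z_1,z_2)$. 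So the task is to produce polynomials $P,Q$ in $(z_2,w)$ satisfying this equation.

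I would exploit the natural involution $\sigma:(\bar z_1,z_2)\mapsto(-\bar z_1-z_2,z_2)$, which preserves $w=\bar z_1z_2+\bar z_1^2$. Applying $\sigma$ to the identity and subtracting from the original, the $P$- and $Q$-sides (which depend on $\bar z_1$ only through $w$) cancel appropriately and yield
\begin{equation}
(2\bar z_1+z_2)\,Q(z_2,\bar z_1z_2+\bar z_1^2)=r(-\bar z_1-z_2)-r(\bar z_1).
\end{equation}
The right-hand side vanishes identically when $z_2=-2\bar z_1$, hence is divisible by $2\bar z_1+z_2$ in $\C[\bar z_1,z_2]$. So I set
\begin{equation}
\widetilde Q(\bar z_1,z_2):=\frac{r(-\bar z_1-z_2)-r(\bar z_1)}{2\bar z_1+z_2},
\end{equation}
a polynomial that is $\sigma$-invariant by construction. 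By the explicit invariant-theoretic computation in the proof of Lemma~\ref{lemma:F1determF2G}, the ring of $\sigma$-invariants in $\C[\bar z_1,z_2]$ is generated by $z_2$ and $\bar z_1z_2+\bar z_1^2$, so $\widetilde Q$ descends to a polynomial $Q(z_2,w)$. Next I define $\widetilde P(\bar z_1,z_2):=\bar z_1\widetilde Q(\bar z_1,z_2)+r(\bar z_1)$; the defining equation of $\widetilde Q$ is precisely the condition that $\widetilde P$ be $\sigma$-invariant, so the same descent produces a polynomial $P(z_2,w)$ satisfying the required identity.

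It remains to check invertibility. Since $r$ vanishes to order $4$, the numerator of $\widetilde Q$ has order at least $4$ in $(\bar z_1,z_2)$, so $\widetilde Q$, and hence $Q$, has no constant or linear terms; similarly $\widetilde P$ has order at least $4$, giving $P$ no constant or linear terms. Consequently $\Phi$ has identity linear part and is a polynomial biholomorphism of $\C^{n+1}$, completing the proof. The only nontrivial step is the invariant-theoretic descent of $\widetilde Q$ and $\widetilde P$ from $\C[\bar z_1,z_2]$ to $\C[z_2,w]$, but this has already been established in Lemma~\ref{lemma:F1determF2G}; everything else is bookkeeping.
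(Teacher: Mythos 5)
Your proposal is correct and takes essentially the same approach as the paper: the paper packages the identical computation as the unique decomposition $r(\bar z_1)=r^+(z_2,w)+\bigl(\bar z_1+\tfrac{z_2}{2}\bigr)r^-(z_2,w)$ with respect to the involution $(\bar z_1,z_2)\mapsto(-\bar z_1-z_2,z_2)$ and sets $f=-r^-$, $g=r^++\tfrac{z_2}{2}r^-$, which are exactly your $Q$ and $P$ (your division of $r(-\bar z_1-z_2)-r(\bar z_1)$ by $2\bar z_1+z_2$ computes $-r^-$). One small caveat: identity linear part only makes $\Phi$ invertible as a germ at the origin, which is all the lemma (and the paper) asserts, so you should not claim a global polynomial biholomorphism of $\C^{n+1}$.
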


\begin{proof}
We will take a transformation of the form
\begin{equation}
(z_1,z_2,w) \mapsto \bigl(z_1,z_2+f(z_2,w),w+g(z_2,w) \bigr) .
\end{equation}
We are therefore trying to find polynomial $f$ and $g$ that satisfy
\begin{equation}
\bar{z}_1z_2 + \bar{z}_1^2+g(z_2,\bar{z}_1z_2 + \bar{z}_1^2)
=
\bar{z}_1 \bigl(z_2 +f(z_2,\bar{z}_1z_2 +
\bar{z}_1^2)\bigr) + \bar{z}_1^2 + r(\bar{z}_1) .
\end{equation}
If we simplify we obtain  \begin{equation}
g(z_2,\bar{z}_1z_2 + \bar{z}_1^2)
-
\bar{z}_1 f(z_2,\bar{z}_1z_2 + \bar{z}_1^2)
= r(\bar{z}_1) .
\end{equation}
Consider the involution $S\colon (\bar{z}_1,z_2)\to (-\bar{z}_1-z_2,z_2)$.
Its invariant polynomials $u(\bar{z}_1,z_2)$
are precisely the polynomials in $z_2,z_2\bar{z}_1+\bar{z}_1^2$.  The polynomial
$r(\bar{z}_1)$ can be uniquely written as
\begin{equation}
r^+(z_2,\bar{z}_1z_2+\bar{z}_1^2)+\Bigl(\bar{z}_1+\frac{z_2}{2}\Bigr)r^-(z_2,\bar{z}_1z_2+\bar{z}_1^2)
\end{equation}
in two polynomials $r^\pm$. 
Taking $f=-r^-$ and $g=r^++\frac{z_2}{2}r^-$, we find the desired solutions. 
\end{proof}


\section{Normal form for general type C.1 submanifolds} \label{sec:generalC1}

In this section we show that generically a Levi-flat type C.1 submanifold is
not formally equivalent to the quadric $M_{C.1}$ submanifold.  In fact, we
find a formal normal form that shows infinitely many invariants.
There are obviously infinitely many invariants if we do not impose the
Levi-flat condition.  The trick therefore is, how to impose the Levi-flat
condition and still obtain a formal normal form.

Let $M \subset \C^3$ be a real-analytic
Levi-flat type C.1 submanifold through the origin.
We know that $M$ is an image of $\R^2 \times \C$
under a real-analytic CR map that is a diffeomorphism onto its
target; see Theorem~\ref{thm:folextendsCxtype}.
After a linear change of coordinates we assume that
the mapping is
\begin{equation}
\begin{split}
(x,y,\xi) \in \R^2 \times \C \mapsto
\bigl(
& x+iy + a(x,y,\xi), \\
& \xi + b(x,y,\xi), \\
& (x-iy) \xi + {(x-iy)}^2 + r(x,y,\xi)
\bigr) ,
\end{split}
\end{equation}
where $a$, $b$ are $O(2)$ and $r$ is $O(3)$.  As the mapping is a CR mapping and a local diffeomorphism, then given any
such $a$, $b$, and $r$, the image is
necessarily Levi-flat at CR points.  Therefore the set of all these
mappings gives us all type C.1 Levi-flat submanifolds.

We precompose with an automorphism of $\R^2 \times \C$ to make $b = 0$.
We cannot similarly remove $a$ as any
automorphism must have real valued first two components (the new $x$ and the
new $y$), and hence those
components can only depend on $x$ and $y$ and not on $\xi$.  So if $a$
depends on $\xi$, we cannot remove it by precomposing.

Next we notice that
we can treat $M$ as an abstract CR manifold.  
Suppose we have two equivalent submanifolds $M_1$ and $M_2$,
with $F$ being the biholomorphic map taking $M_1$ to $M_2$.
If $M_j$ is the image of a map $\varphi_j$, then
note that $\varphi_2^{-1}$ is CR on ${(M_2)}_{CR}$.
Therefore, $G = \varphi_2^{-1} \circ F \circ \varphi_1$ is CR
on ${(F \circ \varphi_1)}^{-1}\bigl({(M_2)}_{CR}\bigr)$, which is
dense in a neighbourhood of the origin of $\R^2 \times \C$ (the
CR singularity of $M_2$ is a thin set, and we pull it back by two
real-analytic diffeomorphisms).  A real-analytic diffeomorphism that
is CR on a dense set is a CR mapping.  The same argument
works for the inverse of $G$,
and therefore we have a CR diffeomorphism of $\R^2 \times \C$.
The conclusion we make is the following proposition.

\begin{prop}
If $M_j \subset \C^3$, $j=1,2$ are given by the maps $\varphi_j$
\begin{equation}
\begin{split}
(x,y,\xi) \in \R^2 \times \C \overset{\varphi_j}{\mapsto}
\bigl(
& x+iy + a_j(x,y,\xi), \\
& \xi + b_j(x,y,\xi), \\
& (x-iy) \xi + {(x-iy)}^2 + r_j(x,y,\xi)
\bigr) ,
\end{split}
\end{equation}
and $M_1$ and $M_2$ are locally biholomorphically (resp.\ formally)
equivalent at $0$, then there exists local biholomorphisms (resp.\ formal equivalences)
$F$ and $G$ at $0$,
with $F(M_1) = M_2$, $G(\R^2 \times \C) = \R^2 \times \C$ as germs
(resp.\ formally) and
\begin{equation}
\varphi_2 = F \circ \varphi_1 \circ G .
\end{equation}
\end{prop}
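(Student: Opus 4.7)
The plan is to define $G$ by the evident formula
\begin{equation*}
G := \varphi_1^{-1} \circ F^{-1} \circ \varphi_2 ,
\end{equation*}
so that the identity $\varphi_2 = F \circ \varphi_1 \circ G$ is built into the construction. What must be verified is that each $\varphi_j$ is invertible (so that $G$ makes sense) and that $G$ is a CR (respectively formal CR) diffeomorphism of $\R^2 \times \C$ at the origin.

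Invertibility of $\varphi_j$ is immediate from its linearization: $d\varphi_j|_0$ is the $\R$-linear injection $(x,y,\xi) \mapsto (x+iy,\xi,0) \in \C^3$, whose image is the $4$-real-dimensional tangent space $T_0 M_j$. By the inverse function theorem $\varphi_j$ is a local real-analytic diffeomorphism onto $M_j$, and the same linear invertibility produces a formal inverse in the formal category. Thus $G$ is a well-defined (real-analytic or formal) local self-diffeomorphism of $(\R^2 \times \C, 0)$, and the identity $\varphi_2 = F \circ \varphi_1 \circ G$ holds by construction.

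The main step is to show that $G$ is CR. At any CR point $p \in (M_j)_{CR}$, the vector $d\varphi_j(\partial_{\bar\xi})$ lies in $T^{(0,1)} \C^3 \cap \C T_{\varphi_j(p)} M_j$ (of type $(0,1)$ because $\varphi_j$ is holomorphic in $\xi$, and tangent to $M_j$ because $\varphi_j$ takes values in $M_j$); this intersection has complex dimension $1$, so the vector spans it and $\varphi_j$ is a local CR diffeomorphism near $p$. Combined with the holomorphicity of $F^{\pm 1}$, the composition $G$ is therefore CR on the open set
\begin{equation*}
\varphi_2^{-1}\bigl( F((M_1)_{CR}) \cap (M_2)_{CR} \bigr),
\end{equation*}
which is dense in a neighborhood of the origin because each $(M_j)_{CR}$ is the complement of the proper real-analytic subvariety described in Proposition~\ref{prop:crsingset}, and pullback by real-analytic diffeomorphisms preserves density. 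A real-analytic diffeomorphism that is CR on a dense open set is CR everywhere, and the symmetric argument applied to $G^{-1} = \varphi_2^{-1} \circ F \circ \varphi_1$ shows that $G$ is a CR diffeomorphism of $\R^2 \times \C$.

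In the formal category the density argument is replaced by a purely algebraic one: differentiate the formal identity $\varphi_2 = F \circ \varphi_1 \circ G$ with respect to $\bar\xi$. The left side has no $\bar\xi$-dependence since $\varphi_2$ is formally holomorphic in $\xi$, and the chain rule reduces the right side to $dF \cdot d\varphi_1 \cdot \partial_{\bar\xi} G$; since the complexified composition $dF \cdot d\varphi_1$ is injective at the origin, this forces $\partial_{\bar\xi} G \equiv 0$ as formal power series, which is precisely the formal CR condition on $G$. The only real obstacle is the density claim used in the analytic case, and this follows easily once one unwinds the defining equation of $(M_j)_{CR}$ from Proposition~\ref{prop:crsingset}.
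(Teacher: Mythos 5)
Your construction of $G$ and your handling of the convergent case are fine, and they coincide with the paper's own argument (given in the discussion preceding the proposition): $G$ is CR on the dense pullback of the CR points, and a real-analytic diffeomorphism that is CR on a dense set is CR, with the symmetric argument for $G^{-1}$. The genuine gap is in the formal case, and it sits exactly at the point the proposition is designed to handle. Differentiating $\varphi_2 = F \circ \varphi_1 \circ G$ in $\bar\xi$ does \emph{not} produce the full complexified (injective) real differential of $\varphi_1$: the conjugate identity involves the different unknowns $G^1_\xi, G^2_\xi, \overline{G^3_\xi}$, so you only get the three complex equations $F'(\varphi_1\circ G)\,J(G)\,v = 0$, where $v = (G^1_{\bar\xi}, G^2_{\bar\xi}, G^3_{\bar\xi})^t$ and $J = (\partial_x \varphi_1,\ \partial_y \varphi_1,\ \partial_\xi \varphi_1)$ is the mixed Jacobian in which $\varphi_1$ is differentiated only in the holomorphic direction $\xi$. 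At the origin
\begin{equation*}
J(0) =
\begin{pmatrix}
1 & i & 0\\
0 & 0 & 1\\
0 & 0 & 0
\end{pmatrix},
\end{equation*}
a rank-two matrix with kernel spanned by ${(-i,1,0)}^t$, so the claimed injectivity of $dF\cdot d\varphi_1$ at the origin is false. This is no accident: one computes $\det J = 2i\bigl(\xi + 2(x-iy)\bigr) + O(2)$, which vanishes at $0$ and, for the quadric, exactly along the preimage of the CR singular set. Pointwise at the origin your equations yield only $G^1_{\bar\xi} + iG^2_{\bar\xi} = 0$ and $G^3_{\bar\xi} = 0$; since $G^1, G^2$ are real-valued this does not force $G^1_{\bar\xi} = G^2_{\bar\xi} = 0$ (for instance $G^1 + iG^2 = \xi$ satisfies the first condition while $G^1_{\bar\xi} = \frac{1}{2} \neq 0$).

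The paper closes precisely this hole by an induction on degree that uses the third component of $\varphi_2$, the only place where the antiholomorphic dependence $x - iy$ enters. Assuming $G^j_{\bar\xi}$ vanish in all degrees below $d$, the first two components give $[G^1_{\bar\xi} + iG^2_{\bar\xi}]_d = 0$ and $[G^3_{\bar\xi}]_d = 0$, and then the degree-$(d+1)$ part of the third component gives $[G^1_{\bar\xi} - iG^2_{\bar\xi}]_d\,[G^3 + 2G^1 - 2iG^2]_1 = 0$; the linear factor is nonzero because $G$ is a formal diffeomorphism (its linear parts cannot be linearly dependent), so both combinations, and hence $G^1_{\bar\xi}, G^2_{\bar\xi}$, vanish in degree $d$. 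Your approach can in fact be repaired without induction by weakening ``injective at the origin'' to injectivity over the fraction field: $\det\bigl(J(G)\bigr)$ has nonzero linear part $2i\,[G^3 + 2G^1 - 2iG^2]_1$ (nonzero by the same invertibility of $G$), hence is a nonzero divisor in the integral domain of formal power series, and multiplying $J(G)v = 0$ by the adjugate gives $\det\bigl(J(G)\bigr)v = 0$, so $v = 0$. But as written, the pointwise injectivity step fails at the one point that matters, the CR singularity.
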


In other words, the proposition states that if we find a normal form
for the mapping we find a normal form for the submanifolds.  Let us prove
that the proposition also works formally. 

\begin{proof}
We have to prove that $G$ restricted to $\R^2 \times \C$ is CR, that is, 
$\frac{\partial G}{\partial \bar{\xi}} = 0$.  Let us consider
\begin{equation}
\varphi_2 \circ G = F \circ \varphi_1 .
\end{equation}
The right hand side does not depend on $\bar{\xi}$ and thus the left hand
side does not either.  Write $G = (G^1,G^2,G^3)$.  Let us write $b = b_2$
and $r = r_2$
for simplicity.  Taking derivative of $\varphi_2 \circ G$ with respect
to $\bar{\xi}$ we get:
\begin{equation}
\begin{aligned}
& G^1_{\bar{\xi}} + 
i G^2_{\bar{\xi}} + 
a_x(G) G^1_{\bar{\xi}} +
a_y(G) G^2_{\bar{\xi}} +
a_\xi(G) G^3_{\bar{\xi}} = 0, \\
& G^3_{\bar{\xi}} + 
b_x(G) G^1_{\bar{\xi}} +
b_y(G) G^2_{\bar{\xi}} +
b_\xi(G) G^3_{\bar{\xi}} = 0, \\
&
(G^1_{\bar{\xi}} - i G^2_{\bar{\xi}}) G^3
+
(G^1 - i G^2) G^3_{\bar{\xi}} + 
2 (G^1 - i G^2)
(G^1_{\bar{\xi}} - i G^2_{\bar{\xi}})
\\
& \qquad
+
r_x(G) G^1_{\bar{\xi}} +
r_y(G) G^2_{\bar{\xi}} +
r_\xi(G) G^3_{\bar{\xi}} = 0 .
\end{aligned}
\end{equation}
Suppose that the homogeneous parts of $G^j_{\bar{\xi}}$ are zero for all
degrees up to degree $d-1$.  If we look at the degree $d$ homogeneous parts
of the first two equations above we immediately note that it must be that
$G^1_{\bar{\xi}} + i G^2_{\bar{\xi}} = 0$ and
$G^3_{\bar{\xi}} = 0$ in degree $d$.  We then look at the degree $d+1$ 
part of the third equation.  
Recall that  $[\cdot]_{d}$ is the degree $d$
part of an expression.  We get 
\begin{equation}
{[G^1_{\bar{\xi}} - i G^2_{\bar{\xi}}]}_{d}
{[G^3 + 2 G^1 - i 2 G^2]}_{1} = 0 .
\end{equation}
As $G$ is an automorphism we cannot have the linear terms be linearly
dependent and hence
$G^1_{\bar{\xi}} =  G^2_{\bar{\xi}} = 0$ in degree $d$.  We finish
by induction on $d$.
\end{proof}

Using the proposition we can restate the result of
Theorem~\ref{thm:themixedholform} using the parametrization.

\begin{cor}
A real-analytic Levi-flat type C.1 submanifold $M \subset \C^3$ is
biholomorphically equivalent to the quadric $M_{C.1}$ if and only if
the mapping giving $M$ is equivalent to a mapping of the form
\begin{equation}
(x,y,\xi) \in \R^2 \times \C \mapsto
\bigl(
x+iy,
\quad
\xi,
\quad
(x-iy) \xi + {(x-iy)}^2 + r(x,y,\xi)
\bigr) .
\end{equation}
\end{cor}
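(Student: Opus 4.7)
The plan is to derive the corollary directly from two earlier results: the Proposition immediately preceding it (which shows that a biholomorphic equivalence between two parametrized submanifolds descends to an equivalence of the parametrizations themselves) and Theorem~\ref{thm:themixedholform} (which handles mixed-holomorphic submanifolds with no $\bar{z}_2$-dependence). Neither direction requires new normalization work; the point is to repackage what has already been proved.

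For the forward direction, I would observe first that the quadric $M_{C.1}$ itself admits the canonical parametrization
\begin{equation}
\varphi^{C.1}(x,y,\xi) = \bigl(x+iy,\ \xi,\ (x-iy)\xi + (x-iy)^2\bigr),
\end{equation}
which is of the form stated in the corollary (with $a = b = r = 0$). If $M$ is locally biholomorphically equivalent to $M_{C.1}$ and $\varphi_M$ is the parametrization of $M$ from the start of \S\ref{sec:generalC1}, then the Proposition above, applied to $M_1 = M$ with parametrization $\varphi_M$ and $M_2 = M_{C.1}$ with parametrization $\varphi^{C.1}$, supplies a local biholomorphism $F$ and a local CR diffeomorphism $G$ of $\R^2\times\C$ satisfying $\varphi^{C.1} = F\circ\varphi_M\circ G$. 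Hence $\varphi_M$ is equivalent to $\varphi^{C.1}$, which has the required form.

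For the backward direction, assume $\varphi_M \sim \varphi'$ where
\begin{equation}
\varphi'(x,y,\xi) = \bigl(x+iy,\ \xi,\ (x-iy)\xi + (x-iy)^2 + r(x,y,\xi)\bigr),
\end{equation}
so that $\varphi' = F\circ\varphi_M\circ G$ for some biholomorphism $F$ and some CR diffeomorphism $G$. The image $F(M)$ equals the image of $\varphi'$. Setting $z_1 = x+iy$, $z_2 = \xi$, and substituting $x = (z_1+\bar z_1)/2$, $y = (z_1-\bar z_1)/(2i)$, this image is the submanifold of $\C^3$ defined by
\begin{equation}
w = \bar z_1 z_2 + \bar z_1^2 + \tilde r(z_1,\bar z_1,z_2),
\end{equation}
where $\tilde r$ is $O(3)$ and depends only on $z_1,\bar z_1,z_2$ (not on $\bar z_2$). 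This is exactly the hypothesis of Theorem~\ref{thm:themixedholform} in the case $n=2$, which yields $F(M)\simeq M_{C.1}$; composing with $F^{-1}$ gives $M\simeq M_{C.1}$.

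There is no genuine obstacle here: the convergence theory and the Levi-flatness are already absorbed into Theorem~\ref{thm:themixedholform}, and the interplay between biholomorphic equivalence of submanifolds and CR-equivalence of their parametrizations is precisely what the preceding Proposition is designed to provide. The one routine verification is that the linear substitution $(x,y)\leftrightarrow(z_1,\bar z_1)$ preserves the $O(3)$ condition on $r$, which is immediate.
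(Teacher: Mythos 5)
Your proposal is correct and takes essentially the same approach as the paper: the paper's own (very terse) proof likewise observes that the image of a map of the stated form is the graph $w = \bar{z}_1 z_2 + \bar{z}_1^2 + \rho(z_1,\bar{z}_1,z_2)$ and applies Theorem~\ref{thm:themixedholform}, with the forward direction supplied by the preceding proposition via the canonical parametrization of $M_{C.1}$. You merely spell out both directions explicitly where the paper leaves the forward one implicit.
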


That is, $M$ is equivalent to $M_{C.1}$ if and only if we can get rid of the
$a(x,y,\xi)$ via pre and post composing with automorphisms.  The proof of
the corollary follows
as a submanifold that is realized by this map
must be of the form
$w = \bar{z}_1z_2 + \bar{z}_1^2 + \rho(z_1,\bar{z}_1,z_2)$ and
we apply Theorem~\ref{thm:themixedholform}.


  We have seen that the involution $\tau$ on $M$, in particular
when $M$ is the quadric,  is useful to compute the automorphism group and to construct Levi-flat submanifolds of type $C.1$. 
We will also need to deal with power series in $z,\bar z, \xi$. Thus we
extend $\tau$, which is originally defined on $\C^2$, as follows
\begin{equation}
\sigma(z,\bar z,\xi)=(z,-\bar z-\xi,\xi).
\end{equation}
Here $z,\bar z,\xi$ are treated as independent variables. 
Note that $z,\xi,w=\bar z\xi+\bar z^2$ are invariant by $\sigma$, while $\eta=\bar z+\frac{1}{2}\xi$ is skew invariant by $\sigma$. 
A power series in $z,\bar z,\xi$ that is invariant by $\sigma$ is precisely a power series in $z,\xi,w$.
In general, a power series $u$ in $z,\bar z,\xi$ admits a unique decomposition
\begin{equation}
u(z,\bar z,\xi)=u^+(z,\xi,w)+\eta u^-(z,\xi,w).
\end{equation}

First we introduce degree for power series $u(z,\bar z,\xi)$  and weights for
power series  $v(z,\xi,w)$.  As usual we assign degree
 $i+j+k$ to the monomial $z^i\bar z^j\xi^k$. We assign weight $i+j+2k$
to the monomial $z^i\xi^jw^k$. For simplicity, we will call them  weight in both situations. 
 Let us also denote
\begin{equation}
[u]_d(z,\bar z,\xi)=\sum_{i+j+k=d}u_{ijk}z^i\bar z^j\xi^k, \quad [v]_d(z,\xi,w)=\sum_{i+j+2k=d}v_{ijk}z^i\xi^jw^k. 
\end{equation}
Set $[u]_{i}^j=[u]_i+\cdots+[u]_j$ and $[v]_i^j=[v]_i+\cdots+[v]_j$ for $i\leq j$. 
%

\begin{thm} \label{thm:formalnormformC3}
Let $M$ be a
real-analytic Levi-flat type C.1 submanifold in $\C^{3}$. There exists a formal biholomorphic map transforming $M$ into 
 the image of
\begin{equation}
\hat\varphi(z,\bar{z},\xi)=\bigl(z+A(z,\xi, w)w\eta, \xi,w\bigr)
\end{equation}
with  $\eta=\bar z+\frac{1}{2}{\xi}$ and $w=\bar z\xi+\bar z^2$.  Suppose further that $A\not\equiv0$.
  Fix $i_*,j_*,k_*$  such that $j_*$ is the largest integer satisfying
$A_{i_*j_*k_*}\neq0$ and $i_*+j_*+2k_*=s$.  Then we can achieve 
\begin{equation}
  A_{i_*(j_*+n)k_*}=0, \quad n=1,2,\ldots. 
\end{equation}
  Furthermore, the power series $A$
 is uniquely determined up to  the transformation
\begin{equation}
 A(z,\xi,w)\to \bar c^{3}A(cz,\bar c\xi,\bar c^2w), \quad c\in\C\setminus\{0\}.
\end{equation}
In the above normal form with $A\not\equiv0$, the group of formal
biholomorphisms that preserve the normal form consists of
dilations 
\begin{equation}
(z,\xi,w)\to (\nu z,\bar\nu\xi,\bar\nu^2w)
\end{equation}
satisfying  $\bar\nu^{3}A(\nu z,\bar\nu\xi,\bar\nu^2w)=A(z,\xi,w)$.
\end{thm}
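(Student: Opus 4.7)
The plan is to produce the normal form by an inductive construction along weights, and then to normalize the resulting series $A$ using the one-parameter family of dilations that remains. Using the reduction in the preceding section, I may assume $M$ is parametrized by
\[
\varphi(x,y,\xi) = \bigl(z + a(x,y,\xi),\ \xi,\ \bar z\xi + \bar z^2 + r(x,y,\xi)\bigr),
\]
with $a,r$ of orders $O(2)$ and $O(3)$, and that formal biholomorphic equivalence of two such $\varphi$'s corresponds to conjugation by a pre-composition $\Psi$ (a formal CR diffeomorphism of $\R^2\times\C$ fixing $0$) together with a post-composition $\Phi$ (a formal biholomorphism of $\C^3$ fixing $0$). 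The central algebraic tool is the $\sigma$-decomposition already introduced: every formal series $u(z,\bar z,\xi)$ splits uniquely as $u = u^+(z,\xi,w) + \eta\,u^-(z,\xi,w)$.

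First I would proceed by induction on weight to bring $\varphi$ to the stated form. Assuming the normalization is achieved through weight $d-1$, examine the weight-$d$ discrepancy. Post-compositions of the form $(z_1, z_2, z_3) \mapsto (z_1 + h_1,\, z_2 + h_2,\, z_3 + h_3)$ contribute only $\sigma$-invariant terms, so they remove the $u^+$ parts of the discrepancies in the first and third components and restore the second component to $\xi$. The $\sigma$-anti-invariant contributions are then killed by pre-composition: writing the shift of $(x,y)$ as a complex polynomial in $z,\bar z$ and expanding via $\bar z = \eta - \xi/2$, $\bar z^2 = w - \xi\eta + \xi^2/2$, and higher-order analogs, one computes the image of the linearized pre-composition on the anti-invariant side. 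Coordinated with the $\xi$-shift $g_3(x,y,\xi)$ paired with a post-composition in $z_2$, every anti-invariant contribution to the first component not of the form $A(z,\xi,w)\,w\,\eta$ lies in this image and is eliminated.

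For the normalization of $A$: by Proposition~\ref{prop:C1linpart}, any formal biholomorphism preserving $\hat\varphi$ has dilation linear part, and matching the coefficients of $\xi$ and $w$ then forces both $\Phi$ and $\Psi$ to be pure dilations. A direct computation yields the stated transformation law $A \mapsto \bar c^{3}A(cz,\bar c\xi,\bar c^{2}w)$. Fix $(i_*,j_*,k_*)$ as in the statement. The dilation preserves weight and rescales each coefficient $A_{ijk}$ by a monomial in $c,\bar c$ depending only on $(i,j,k)$; the complex parameter $c$ is used to set $A_{i_*(j_*+1)k_*}=0$. Iterating within the same weight class and exploiting that monomials at distinct shifts $n$ transform by distinct characters yields $A_{i_*(j_*+n)k_*}=0$ for every $n\geq1$. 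The residual stabilizer is exactly the set of $\nu$ satisfying $\bar\nu^{3}A(\nu z,\bar\nu\xi,\bar\nu^{2}w)=A(z,\xi,w)$, which is finite or one-dimensional depending on $A$.

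The hard part will be verifying that the only unremovable anti-invariant terms in the first component are those of the form $A(z,\xi,w)\,w\,\eta$, i.e., that the factor $w$ is forced. This reduces to an explicit rank computation at each weight for the linearized action of pre-compositions on anti-invariant terms, organized around the identity $\eta^{2} = w + \xi^{2}/4$, which allows one to trade pure-$\eta$ terms (not divisible by $w$) against terms with a $w$-factor plus already-eliminated invariant terms. A secondary difficulty is coordinating the adjustments at weight $d$ with those at lower weights, which requires careful bookkeeping of the compound pre- and post-composition.
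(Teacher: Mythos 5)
Your first half follows the paper's own route: induction on weight, with post-compositions of $\C^3$ absorbing the $\sigma$-invariant discrepancies and pre-compositions of $\R^2\times\C$ handling the skew-invariant side, the residual skew terms being reduced modulo the range $R_k$ of the operator $u(z,\bar z)\mapsto \eta T^-u$ to the $w$-divisible complement $\bigoplus_{a+b+2c=k-1,\,c>0}\C z^a\xi^b w^c\eta$. The point you flag as the hard part (that the factor $w$ is forced) is exactly what the paper settles via the identity $T^-u(z,\xi,0)=-\xi^{-1}\bigl(u(z,-\xi)-u(z,0)\bigr)$, which shows the $w$-free skew terms can always be matched by a choice of $g_1$; as a sketch of the preliminary normal form, this part is fine.

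The second half, however, contains a genuine error. You claim the conditions $A_{i_*(j_*+n)k_*}=0$, $n\geq 1$, are achieved by choosing the dilation parameter $c$. A dilation acts on each coefficient $A_{ijk}$ by multiplication by the nonzero character $c^{i}\bar c^{\,j+2k+3}$; it rescales coefficients and can never annihilate a nonzero one, so no choice of $c$, iterated or not, produces any vanishing condition. The underlying premise is also false: the stabilizer of the preliminary normal form is \emph{not} just dilations, because the preliminary normalization was made unique only under the side condition that $g_1(z,0)=0$, and releasing that condition leaves an infinite-dimensional residual freedom $b(z)=\sum_{N\geq 2}b_Nz^N$ of pure holomorphic terms in the first components of $F$ and $G$. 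The paper's central step — entirely absent from your argument — is to compute how this freedom moves the normal form, namely $[\hat a^-]_{N+s-1}=[a^-]_{N+s-1}+Lb_N$ with $Lb_N$ an explicit first-order operator built from $[a^-]_s$, and to verify the nondegeneracy $(Lb_N)_{i_*(j_*+N-1)k_*}=(a^-)_{i_*j_*k_*}\bar b_N\bigl(-\lambda'_{N-1}-j_*\lambda_{N-1}+k_*\lambda_N\bigr)\neq 0$, using $\lambda_N=\lambda'_N=(-1)^N$ and $k_*>0$ (the sum is $(-1)^N(1+j_*+k_*)$). It is this action of the $b_N$'s, one for each $n=N-1\geq 1$, that kills the coefficients $A_{i_*(j_*+n)k_*}$ and is uniquely determined in the process; the dilations only survive as the final ambiguity, which yields the transformation law $A\to\bar c^{3}A(cz,\bar c\xi,\bar c^{2}w)$ and, together with the uniqueness of the $b_N$'s, the description of the automorphism group. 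Without the operator $L$ and its injectivity on each $b_N$, both the attainability of the vanishing conditions and the uniqueness statement are unproved.
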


\begin{proof}
It will be convenient to write the CR diffeomorphism  $G$  of $\R^2\times \C$
as $(G_1,G_2)$ where $G_1$ is complex-valued and depends on $z,\bar z$,
while $G_2$ depends on $z,\bar z,\xi$.  Let $M$ be the image of a mapping $\varphi$ defined by
\begin{equation}
\begin{aligned}
(z,\bar{z},\xi) \overset{\varphi}{\mapsto}
\bigl(
& z + a(z,\bar z,\xi),
\\
& \xi,
\\
& \bar z \xi + {\bar z}^2 + r(z,\bar z,\xi)
\bigr)
\end{aligned}
\end{equation}
with $a=O(2), r=O(3)$.  We want to find a formal biholomorphic map $F$ of $\C^3$
and a formal CR diffeomorphism $G$ of $\R^2\times \C$ such that
\begin{equation}
F\hat\varphi G^{-1}=\varphi
\end{equation}
with $\hat{\varphi}$ in the normal form.

To simplify the computation, we will first achieve a preliminary normal form where  $r=0$ and the function $a$ is skew-invariant
by $\sigma$.
For the  preliminary normal form we will only apply $F, G$ that are tangent to the
identity. We will then use the general $F, G$ to obtain the final normal form.

Let us assume that $F, G$ are tangent to the identity.
 Let $M=F\bigl(\hat\varphi(\R^2\times\C)\bigr)$  where  $\hat \varphi$
is determined by $\hat a, \hat r$. We write
\begin{equation}
F=I+(f_1,f_2,f_3), \quad G=I+(g_1,g_2).
\end{equation}
The $\xi$ components in $\varphi G=F\hat\varphi$ give us
\begin{equation}\label{}
g_2(z,\bar z,\xi)=f_2\bigl(z+\hat a(z,\bar z,\xi), \xi, \bar z\xi+\bar
z^2+\hat r(z,\bar z,\xi)\bigr).\label{g2zz}
\end{equation}
Thus, we are allowed to define $g_2$ by the above identity for any choice of $f_2=O(2)$.
Eliminating $g_2$   in other components of $\varphi
G=F\hat \varphi$, we obtain
\begin{align}\label{f1vg}
f_1\circ\hat\varphi-g_1&=  a\circ G-\hat a,\\
f_3\circ\hat\varphi-\bar zf_2\circ\hat\varphi&=r\circ G-\hat
r+2\eta\tilde g_1+\tilde g_1f_2\circ\hat \varphi+\tilde g_1^2,
\label{f3vz}
\end{align}
where $\tilde g_1(z,\bar z)=\bar g_1(\bar z,z)$ and 
\begin{equation}
(a\circ G)(z,\bar z,\xi):=a\bigl(G_1(z,\bar z),\bar G_1(\bar z,z),
G_2(z,\bar z,\xi)\bigr).
\end{equation}

  Each power
series $r(z,\bar z,\xi)$ admits a unique decomposition
\begin{equation}
r(z,\bar z,\xi)=r^+(z,\xi,w)+\eta r^-(z,\xi,w),
\end{equation}
where both $r^\pm$ are invariant by $\sigma$. Note that  $r(z,\bar z,\xi)$ is a power series in $z,\xi$ and $w$, 
if and only if it is invariant by $\sigma$, i.e.\ if $r^-=0$.
We write
\begin{equation}
r^+={wt}\, (k), \quad\text{or}\quad  {wt} \, (r^+)\geq k,
\end{equation}
if $r^+_{abc}=0$ for $a+b+2c<k$. Define $r^-=wt  (k)$ analogously
and write $\eta r^-={wt}\,(k)$ if $r^-={wt}\,(k-1)$. We write
$r={wt}\,(k)$ if $(r^+,\eta r^-)={wt}\,(k)$. Note that
\begin{equation}
r=O(k)\Rightarrow r={wt}\,(k); \quad wt \, (rs)\geq wt\, (r)+wt\,
(s).
\end{equation}

The power series in $z,\bar z$ play a special role in describing normal forms. Let us define $T^\pm$ via
\begin{equation}
u(z,\bar z)=(T^+u)(z,\xi,w)+(T^-u)(z,\xi,w)\eta.
\end{equation}
Let $S^+_k$ (resp.\ $S^-_k$) be spanned by monomials  in $z,\bar z,\xi$
 which have weight $k$ and are invariant (resp.\ skew-invariant)
 by $\sigma$. Then the range of $\eta T^-$ in $S_k^-$ is a linear subspace $R_k$.  We decompose
\begin{equation}
 S_k^-=R_k\oplus(S_k^-\ominus R_k).
\end{equation}
The decomposition is of course  not unique. We will take
\begin{equation}
 S_k^-\ominus R_k=\bigoplus_{a+b+2c=k-1, c>0} \C z^a\xi^bw^c\eta.
\end{equation}
Here we have used $\eta=\bar{z}+\frac{1}{2}\xi$, $\eta^2=w+\frac{1}{4}\xi^2$,  and 
\begin{gather}
T^+u(z,\xi, w)=\sum_{i,j\geq0}\sum_{0\leq \alpha\leq j/2} u_{ij} \binom{j}{2\alpha}z^i(w+\frac{1}{4}\xi^2)^\alpha(-\frac{1}{2}\xi)^{j-2\alpha},\\
T^-u(z,\xi,w)=\sum_{i\geq0,j>0}\sum_{0\leq \alpha<j/2} u_{ij}\binom{j}{2\alpha+1}z^i(w+\frac{1}{4}\xi^2)^\alpha(-\frac{1}{2}\xi)^{j-2\alpha-1}.
\end{gather}
In particular, we have
\begin{equation}\label{uzx0}
T^-u(z,\xi,0)=\sum_{i\geq0,j>0}(-1)^{j-1}u_{ij} z^i \xi^{j-1}.
\end{equation}
This shows that
\begin{gather}
\label{uzx0+}
T^-u(z,\xi,0)=\frac{1}{-\xi}\bigl(u(z,-\xi)-u(z,0)\bigr).
\end{gather}
We are ready to show that under the condition that $g_1(z,\bar z)$ has no pure holomorphic terms, there exists
a unique $(F,G)$ which is tangent to the identity such that $\hat r=0$ and
\begin{equation}
\hat a\in\mathcal N:=\bigoplus\mathcal N_k, \quad \mathcal N_k:=
S_k^-\ominus R_k.
\end{equation}

 We start with terms of weight $2$ in \eqref{f1vg}-\eqref{f3vz} to get
 \begin{gather}\label{f1vg2}
[f_1]_2-[g_1]_2=[a]_2-\eta[\hat a^-]_1,\\
[f_3]_2=0.
\label{f3vz2}
\end{gather}
Note that $f_j^-=0$.     The first identity implies that
 \begin{equation}
 [f_1]_2-[T^+g_1]_2=[a^+]_2, \quad [T^-g_1]_1=[\hat a^-]_1-[a^-]_1.
 \end{equation}
 The first equation is solvable  with kernel defined by
 \begin{equation}\label{f11q}
 [f_1]_k-[T^+g_1]_k=0 
  \end{equation}
for $k=2$.  This shows that $[g_1]_2$ is still arbitrary and
 we use it to achieve
\begin{equation}
\eta [\hat a^-]_1\in S_2^-\ominus R_2=\{0\}.
\end{equation}
Then the kernel space is defined
 by   \eqref{f11q} and \begin{equation}
\label{Tmg1}
[g_1(z,\bar{z})-g_1(z,0)]_k=0
\end{equation}
with $k=2$.  In particular, under the restriction
 \begin{equation}\label{g1z0}
 [g_1(z,0)]_k=0,
 \end{equation}
 for $k=2$, we have achieved $\hat a^-\in \mathcal N_2$
  by unique $[f_1]_2, [g_1]_2, [f_2]_1, [f_3]_2$.
 By induction, we verify that if \eqref{g1z0} holds for all $k$, we determine
 uniquely  $[f_1]_k, [g_1]_k$ by normalizing $[\hat a]_k\in\mathcal N_k$. We then determine $[f_2]_k, [f_3]_{k+1}$ uniquely to normalize
 $[\hat r]_{k+1}=0$.  For the details, let us find formula for the solutions. 
 We rewrite \eqref{f1vg} as
 \begin{gather}
 T^-g_1=-(a\circ G-\hat a-f_1\circ\hat\varphi)^-,\label{t-g1}\\
( f_1\circ\hat\varphi)^+=(a\circ G-\hat a)^++T^+g_1.\label{f1hv}
 \end{gather}
Using \eqref{uzx0},  we can solve
 \begin{equation}
 (-1)^{j-1}g_{1,ij}=
 -({(a\circ G)}^-)_{i(j-1)0}, \quad j\geq1, \quad i+j=k. 
\label{eq15}
\end{equation}
Then we have
\begin{gather}
(\hat a^-)_{ij0}=0, \quad i+j=k-1; \\
(\hat a^-)_{ij m}=((a\circ G-f_1\circ\hat\varphi+g_1)^-)_{ijm}, \quad m\geq1,  i+j+m=k-1. 
\end{gather}
Note that $
- [g_1]_k(z,-\bar z)=\bar z[(a\circ G-\hat a)^-]_{k-1}(z,\bar z,0)$. We obtain
\begin{gather}
[g_1]_k(z,\bar z)=\bar z[(a\circ G-\hat a)^-]_{k-1}(z,-\bar z,0).\label{eq16}
\end{gather}
Having determined $[g_1]_k$, we take 
\begin{gather}
[ f_1]_k=[(a\circ G-\hat a+g_1)^+]_k.
  \end{gather}
 We then solve \eqref{f3vz} by taking
 \begin{gather}\label{f2k=}
[ f_2]_k=[E^-]_k, \quad [f_3]_{k+1}=[(E-\frac{1}{2}\xi f_2)^+]_{k+1},\\
 E:=r\circ G-\hat
r+2\eta\tilde g_1+\tilde g_1f_2\circ\hat \varphi+\tilde g_1^2.\label{e=rg}
 \end{gather}
We have achieved the preliminary normalization. 
 
Assume now that 
\begin{gather} 
\varphi(z,\bar z,\xi)=(z+a^-(z,\xi,w)\eta, \xi,w), \quad \hat\varphi (z,\bar z,\xi)=(z+\hat a^-(z,\xi,w)\eta, \xi,w)
\end{gather}
are in the preliminary normal form, i.e.
\begin{equation}
w|a^-(z,\xi,w), \quad w|\hat a^-(z,\xi,w). 
\end{equation}
Let us assume that
\begin{equation}
a^-(z,\xi,w)=wt (s), \quad [a^-]_s\not\equiv0; \quad \hat a^-(z,\xi,w)=wt(s). 
\end{equation}
We assume that $\varphi G=F\hat\varphi$ with
\begin{gather}
F(z,\xi,w)=I+(f_1,f_2,f_3),\\
G(z,\bar{z},\xi)=(z+g_1(z,\bar{z}), \xi+g_2(z,\bar{z},\xi)).
\end{gather}
Here $f_i,g_j$ start with terms of weight and order at least $2$. 
In particular, we have
\begin{equation}
f_i=wt(N), \quad g_i=wt (N),\quad i=1,2;   \quad f_3=wt(N'); \quad N'\geq N\geq2. 
\end{equation}
Set $(P,Q,R):=\varphi G$. Using $N\geq2$, $s\geq2$, and the Taylor theorem, we obtain
\begin{align}
P&=z+g_1(z,\bar{z})+a^-(z,\xi,w)\eta+a^-(z,\xi,w)(\bar g_1(\bar z,z)+\frac{1}{2}g_2(z,\bar z,\xi))\\
&\quad +\eta\nabla a^-(z,\xi,w)\cdot \Bigl(g_1(z,\bar z), g_2(z,\bar z,\xi), 
(
\xi+2\bar z)\bar g_1(\bar z,z)+\bar zg_2(z,\bar z,\xi)\Bigr)\\
&\quad+wt(s+N+1),\\
Q&=\xi+g_2(z,\bar z,\xi),\\
R&=w+(2\bar z+\xi)\bar g_1(\bar z,z)+\bar zg_2(z,\bar z,\xi)+wt(2N).
\end{align}
We also have $(P,Q,R)=F\hat\varphi$. Thus
\begin{align}
P&=z+\hat a^-(z,\xi,w)\eta+f_1(z,\xi,w)+\partial_zf_1(z,\xi,w)\hat a^-(z,\xi,w)\eta+wt(N+s+1),\\
Q&=\xi+f_2(z,\xi,w)+\partial_zf_2(z,\xi,w)\hat a^-(z,\xi,w)\eta+wt(N+s+1),\\
R&=w+f_3(z,\xi,w)+\partial_zf_3(z,\xi,w)\hat a^-(z,\xi,w)\eta+wt(N'+s+1).\label{f3g1}
\end{align}
We will use the above 6 identities for $P,Q,R$ in two ways. 
First we use their lower order terms to  get
\begin{gather}\label{f1g1}
f_1(z,\xi,w)=g_1(z,\bar z)+( a^-(z,\xi,w)-\hat a^-(z,\xi,w))\eta+wt(N+s),\\
\quad f_2(z,\xi,w)=g_2(z,\bar z,\xi)+wt(N+s), \\  f_3(z,\xi,w)=(2\bar z+\xi)\bar g_1(\bar z,z)+\bar zg_2(z,\bar z,\xi)
+wt(2N)+wt(N'+s).\label{f3zx}
\end{gather}
Hence, we can take $N'=N+1$.  By \eqref{f1g1} and the preliminary normalization, we first know that
\begin{gather}
\hat a=a+wt(N+s-1), \\
 f_1(z,\xi,w)=b(z)+wt (N+s), \quad g_1(z,\bar z)=b(z)+wt (N+s). \label{f1g1n}
\end{gather}
We compose \eqref{f3zx} by $\sigma$ and then take the difference of the two equations to get
\begin{gather}
f_2(z,\xi,w)=-\bar b(\bar z)-\bar b( -\bar z-\xi)+wt(2N-1)+wt(N+s), \\
 f_3(z,\xi,w)=
-\bar z\bar b(-\bar z-\xi)
+(\bar z+\xi)\bar b(\bar z)+wt(2N)+wt(N+s+1).
\end{gather}
Here we have used $N'=N+1$. 
Let $b(z)=b_Nz^N+wt(N+1)$. 
Therefore, we have
\begin{gather}
g_2(z,\bar z,\xi)=-\bar b_N(\bar z^N+(-\bar z-\xi)^N)+wt(N+1),\\
\bar g_1(\bar z,z)+\frac{1}{2}g_2(z,\bar z,\xi)=
\eta\bar b_N\sum\bar z^i(-\bar z-\xi)^{N-1-i}+wt(N+1),\\
(2\bar z+\xi)\bar g_1(\bar z,z)+\bar zg_2(z,\bar z,\xi)=
\bar b_N(\bar z^{N-1}+(-\bar z-\xi)^{N-1})w+wt(N+2).
\end{gather}

Next, we use the two formulae for $P$  and \eqref{f1g1n} to get the identity in higher weight:
\begin{align}\label{hata-}
\hat a^-&=a^-+g_1^-+Lb_N+wt(N+s), \quad f_1-g_1^+=wt (N+s+1). 
\end{align}
Here we have used $f_1^-=0$ and
\begin{align}
Lb_N(z,\xi,w)&:=-Nb_Nz^{N-1} [a^-]_s(z,\xi,w)-[a^-]_s(z,\xi,w)\bar b_N\sum_i\bar z^i(-\bar z-\xi)^{N-1-i}\nonumber\\
&\quad +\nabla [a^-]_s\cdot \Bigl(b_Nz^N, -\bar b_N(\bar z^N+(-\bar z-\xi)^N),
\bar b_Nw(\bar z^{N-1}+(-\bar z-\xi)^{N-1})\Bigr).
\end{align}
Recall that $w|a^-$ and $w|\hat a^-$. We also have that
$
w|Lb_N(z,\xi,w)
$
and $Lb_N$ is homogenous 
in weighted variables and of weight $N+s-1$. 
This shows that $[g_1^-(z,\xi,0)]_{N+s-1}=0$. By \eqref{uzx0}, we get
\begin{equation}
[g_1(z,\bar z)]_{N+s}=[g_1(z,0)]_{N+s}, \quad [\hat a^-]_{s+N-1}=[ a^-]_{s+N-1}+Lb_N.
\end{equation}

Let us make some  observations. First, $Lb_N$ depends only on $b_N$ and it does not depend on coefficients of $b(z)$ of degree larger than $N$. 
We observe that the first identity says that all coefficients of $[g_1]_{N+s}$ must be zero, except that the coefficient
$g_{1,(N+s)0}$ is arbitrary. On the other hand $Lb_N$, which has weight $N+s-1$, 
depends only on $g_{1,N0}$, while $N+s-1>N$. 
Let us assume for the moment that we have $Lb_N\neq0$ for all $b_N\neq0$. We will then choose a suitable complement subspace ${\mathcal N}^*_{N+s-1}$
in the space of weighted homogenous polynomials in $z,\xi,w$ of weight $N+s-1$
for $Lb_N$.  Then $\hat a^-\in w\sum_{N>1}{\mathcal N}^*_{N+s-1}$ will be the required  normal form. 
The normal form will be obtained by the following procedures: Assume that $\varphi$ is not formally equivalent to the quadratic mapping in the preliminary normalization. 
We first achieve the preliminary normal form by a mapping $F^0=I+(f_1^0,f_2^0,f_3^0)$ and $G^0=I+(g_1^0,g_2^0)$ which are tangent to the identity. We can make
$F^0,G^0$ to be unique by requiring $f^1_1(z,0)=0$.  Then $a$ is normalized such that $\hat a=\hat a^-\eta$ with $[\hat a^-]_s$ being non-zero
 homogenous part of the lowest weight. We may assume that $[a]_{s+1}=[\hat a]_{s+1}$. 
Inductively, we choose $f^1_{1,N00}$  ($N=2, 3, \ldots$) to achieve $[\hat a^-]_{N+s-1}\in w{\mathcal N}^*_{N+s-1}$.
In this step for a given $N$, we determine mappings $F^1=I+(f_1^1,f_2^1,f_3^1)$ and $G^1=I+(g_1^1,g_2^1)$ by requiring that $f_1^1(z,\xi,w)$ contains only
one term $\xi^N$, while $f_1^1,f_2^1,g_1^1,g_2^1$ have weight at most $N$ and $f_3^1$ has weight at most $N+1$. 
In the process, we also show that $[f_1^1(z,\xi,w)]_2^{N+s}$ depends only on $z$, if we do not want to impose the restriction on $f_1^1$.
Moreover, the coefficient of $\xi^{N+s-1}$ of $f_1^1$   can still be arbitrarily chosen without changing the normalization
achieved for $[\hat a^-]_{N+s-1}$ via $[f_1^1]_{N}$.  However, by achieving  $[\hat a^-]_{N+s-1}\in w{\mathcal N}^*_{N+s-1}$ via $F^1,G^1$, we may destroy the
preliminary normalization achieved via $F_0,G_0$. We will then restore the preliminary normalization via $F^2=I+(f_1^2,f_2^2,f_3^2), G^2=I+(g_1^2,g_2^2)$ satisfying
$g^2_1(z,0)=0$. This amounts to determining $g_1^2=g_1$ and $f_1^2=f_1$
via  \eqref{t-g1} and \eqref{f1hv} for which the terms of weight at most $N+s$ have been
determined by \eqref{hata-}, and then $f_2^2=f_2,f_3^2=f_3,g_2^2=g_2$ are determined by \eqref{f2k=}-\eqref{e=rg} and \eqref{g2zz}, respectively. 
 This allows us to repeat the procedure to achieve the normalization in any higher weight.
We will then remove the restriction that the normalizing mappings must be tangent
to the identity. This will alter  the normal form only by suitable linear dilations.

Suppose that $b_N\neq0$. Let us verify that 
\begin{equation}\label{lbn0}
Lb_N\neq0.
\end{equation}
We will also identify one of non-zero coefficients to describe the normalizing condition on $\hat a$. 
We write the two invariant polynomials
\begin{gather}
\label{zbn+}
\bar z^N+(-\bar z-\xi)^N
=\lambda_N\xi^N+\sum_{j<N}p_{ijk}z^i\xi^jw^k,
\\
\label{zbn++}
\sum_i\bar z^i(-\bar z-\xi)^{N-1-i}=\lambda_{N-1}'\xi^{N-1}+\sum_{j<N-1}q_{ijk}z^i\xi^jw^k.
\end{gather}
If we plug in $w=\bar{z}^2+\bar{z}\xi$ we obtain a polynomial identity in
the variables $z,\bar{z},\xi$.
\begin{gather}
\bar z^N+(-\bar z-\xi)^N
=\lambda_N\xi^N+\sum_{j<N}p_{ijk}z^i\xi^j{(\bar{z}^2+\bar{z}\xi)}^k,
\\
\sum_i\bar z^i(-\bar
z-\xi)^{N-1-i}=\lambda_{N-1}'\xi^{N-1}+\sum_{j<N-1}q_{ijk}z^i\xi^j{(\bar{z}^2+\bar{z}\xi)}^k.
\end{gather}
If we set $\bar{z} = z = 0$, we obtain that
\begin{equation}
\lambda_N = \lambda'_N = {(-1)}^N .
\end{equation}

 Recall that $j_*$ is the largest integer such that $(a^-)_{i_*j_*k_*}\neq0$ and $i_*+j_*+2k_*=s$.  
Since $w|[a^-]_s$,  then $k_*>0$.  We obtain
\begin{equation}
(Lb_N)_{i_*(j_*+N-1)k_*}=(a^-)_{i_*j_*k_*}\bar b_N(-\lambda_{N-1}'-j_*\lambda_{N-1}+k_*\lambda_N)\neq0.
\end{equation}
 Therefore, we can achieve
\begin{equation}
(\hat a^-)_{i_*(j_*+n)k_*}=0, \quad n=1,2,\ldots. 
\end{equation}
This determines uniquely all $b_2, b_3, \ldots.$ 
 
We now remove the restriction that $F$ and $G$ are tangent to the identity. Suppose that both $\varphi$ and $\hat\varphi$ are in the normal form.
Suppose that $F\varphi=\hat\varphi G$.  Then looking at the quadratic terms, we know that the linear parts $F,G$ 
must be dilations. In fact, the linear part of $F$ must be the linear automorphism of the quadric.  Thus the linear parts of $F$ and $G$ have
 the forms
\begin{equation}
G'\colon(z,\xi)=(\nu z,\bar \nu\xi), \quad F'(z,\xi,w)=(\nu z,\bar \nu\xi,\bar \nu^2w). 
\end{equation}
Then $(F')^{-1}\hat\varphi G'$ is still in the normal form. Since $(F')^{-1}F$ is holomorphic and $(G')^{-1}G$ is CR, by the uniqueness
of the normalization, we know that $F'=F$ and $G'=G$. 
Therefore,  $F$ and $G$ change the normal form $a^-$
as follows
\begin{equation}
a^-(z,\xi,w)= \bar \nu \hat a^-(\nu z,\bar \nu\xi,\bar \nu^2w), \quad \nu\in\C\setminus\{0\}. 
\end{equation}
When $[\hat a^-]_s=[a^-]_s\neq0$, we see that $|\nu|=1$. Therefore, the formal automorphism group is discrete or one-dimensional. 
\end{proof}
In~\cite{Coffman:crosscap}, Coffman used an analogous method of even/odd function decomposition to obtain a quadratic normal form for non Levi-flat real analytic
$m$-submanifolds in $\C^n$ with an CR singularity satisfying certain non-degeneracy conditions,
provided $\frac{3(n+1)}{2} \leq m<n$.
He was able to achieve the convergent normalization by a rapid iteration method. 
Using the above decomposition of invariant and skew-invariant functions of the involution $\sigma$, one might achieve a convergent
solution for approximate equations when $M$ is formaly equivalent to the quadric. 
However, when the iteration is employed,  each new
CR mapping $\hat\varphi$  might only be defined on a domain that is
proportional to that  of the previous $\varphi$ in a \emph{constant} factor. 
 This is significantly  different from the situations of Moser~\cite{Moser85} and Coffman~\cite{Coffman:crosscap},
\cite{Coffman:unfolding}, where rapid iteration methods are applicable. 
 Therefore, even if $M$ is formally equivalent to the quadric, we do not know if they are holomorphically
equivalent.


\section{Instability of Bishop-like submanifolds}
\label{sec:bishopexamples}

Let us now discuss stability of Levi-flat submanifolds under small
perturbations that keep the submanifolds Levi-flat, in particular
we discuss which quadratic invariants are stable when moving from point to
point on the submanifold.  The only
stable submanifolds are A.$n$ and C.1.  The Bishop-like submanifolds (or even
just the Bishop invariant) are not stable under perturbation, which we
show by constructing examples.

\begin{prop} \label{prop:instability}
Suppose that $M \subset \C^{n+1}$, $n \geq 2$, is a connected
real-analytic
real codimension 2
submanifold that has a non-degenerate CR singular at the origin.  $M$ can be written
in coordinates $(z,w) \in \C^{n} \times \C$ as
\begin{equation}
w = A(z,\bar{z}) + B(\bar{z},\bar{z}) + O(3),
\end{equation}
for quadratic $A$ and $B$.  
In a neighborhood of the origin all complex tangents
of $M$ are non-degenerate,  while ranks of $A,B$ are upper semicontinuous.
Suppose that $M$ is Levi-flat (that is $M_{CR}$
is Levi-flat).
The CR singular set of $M$ that is not of type B.$\frac{1}{2}$ at the origin is a
real analytic subset of $M$ of codimension at least $2$, while  the CR
singular set of $M$ that is of type  B.$\frac{1}{2}$ the origin has codimension at
least $1$.  A.$n$ has an isolated CR singular point at the origin and so
does C.1 in $\C^3$.
Let $S_0 \subset M$ be the set of CR singular points.
There is a neighborhood $U$ of the origin such that for $S=S_0\cap U$
we have the following.
\begin{enumerate}[(i)]
\item \label{thmitem:Akall}
If $M$ is of type A.$k$ for $k \geq 2$ at the origin, then it is   of type A.$j$  at each point
of $S$ for some
$j \geq k$.
\item \label{thmitem:C1dense}
If $M$ is of type C.1 at the origin, then it is of type C.1 on   $S$.
If $M$ is of type C.0 at the origin, then it is of type C.0 or C.1 on   $S$.
\item \label{thmitem:examples}
There exists an $M$ that is of type B.$\gamma$ at one point and of
C.1 at CR singular points arbitrarily near.  Similarly there exists an $M$
of type A.1 at $p \in M$ that is either of type C.1, or B.$\gamma$, at
points arbitrarily near $p$.  There also
exists an $M$ of type B.$\gamma$ at every point but where $\gamma$
varies from point to point.
\end{enumerate}
\end{prop}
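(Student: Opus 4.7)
\emph{Proof plan.} At every CR singular point $p\in S$ one may translate coordinates to put $p$ at the origin and absorb the resulting holomorphic linear part of $\rho$ into $w$; the new quadratic data $(A(p),B(p))$ has coefficients equal (up to a factor) to $\rho_{z_j\bar z_k}(p)$ and $\rho_{\bar z_j\bar z_k}(p)$, which are real-analytic functions of $p$ reducing to $(A,B)$ at $p=0$. Non-degeneracy of the complex tangent at nearby CR singular points and (lower) semicontinuity of the ranks of $A(p)$ and $B(p)$ are then immediate from continuity of the Hessian. The set $S$ is the real-analytic zero locus of $\bar\partial\rho$; linearizing $\bar\partial\rho$ at $0$ gives the complex linear map $z\mapsto A^Tz + 2B\bar z$, and inspecting it in each quadric type yields the codimension bounds: in A.$n$ the map is invertible, forcing an isolated zero; in B.$\tfrac12$ the linearization $z_1+\bar z_1=0$ is only one real equation, giving $\operatorname{codim} S\ge 1$; in every other case one has full complex rank and $\operatorname{codim} S\ge 2$. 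For generic C.1 in $\C^3$, eliminating $z_2$ from $\partial_{\bar z_1}\rho=0$ turns $\partial_{\bar z_2}\rho = 0$ into a complex equation in $z_1$ whose leading $O(2)$ term has an isolated zero.

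For (i) and (ii), combine semicontinuity of ranks with the classification list. First, every type with $A\ne 0$ has $\operatorname{rank} B\le 1$, so $\operatorname{rank}B(0)=k\ge 2$ forces $\operatorname{rank}B(p)\ge 2$ and therefore $A(p)=0$ at nearby CR singular points, giving type A.$j$ with $j\ge k$; this is (i). For (ii) one needs the separating invariant between B.$\gamma$ and C.x: for a rank-$1$ matrix written as $A=vw^*$, the condition $v\parallel w$, equivalently $\operatorname{range}A=\operatorname{range}A^*$, is preserved by the group action $A\mapsto\lambda T^*AT$, is closed, and characterizes B.$\gamma$; its failure characterizes C.x and is open. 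A real-analytic rank-$1$ factorization $A(p)=v(p)w(p)^*$ exists locally on the rank-$1$ locus, so if $A(0)$ is of C type then $A(p)$ stays of C type at nearby CR singular points; combining with semicontinuity of $\operatorname{rank}B(p)$ gives the dichotomy in (ii).

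For (iii), three explicit Levi-flats via Proposition~\ref{prop:imagelf}, all of mixed-holomorphic form so Levi-flatness is automatic, suffice:
\[
w = z_1\bar z_1 + \gamma\bar z_1^2 + \bar z_1 z_2^2, \quad w = \bar z_1^2 + z_1 z_2\bar z_1, \quad w = z_1\bar z_1 + \gamma(z_2)\bar z_1^2.
\]
The first is of type B.$\gamma$ at $0$, and along its CR singular set the Hessian gives $A(p)$ with entries $A_{11}=1$ and $A_{12}=2z_2^{(p)}$, which represents a C-type rank-$1$ matrix whenever $z_2^{(p)}\neq 0$, hence type C.1 at such $p$. The second is A.1 at $0$, with CR singular locus $\bar z_1=-z_1z_2/2$ splitting into a B-branch $\{z_1=0\}$ and a C.1-branch $\{z_1\ne 0\}$. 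The third has CR singular set $\{z_1=0\}$ and type B.$\gamma(z_2^{(p)})$ at the point $(0,z_2^{(p)},0)$, so $\gamma$ varies as any chosen nonconstant real-analytic function. The main technical hurdle is identifying the correct group-invariant separating B from C under the full action $A\mapsto\lambda T^*AT$, where Hermiticity itself is not preserved; once one realizes that parallelism of $v$ and $w$ (openness of the C-locus among rank-$1$ matrices) is the right notion, (i)--(iii) reduce to semicontinuity of rank plus a small Hessian computation.
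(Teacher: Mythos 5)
Your overall skeleton matches the paper's: the translation-and-absorption argument making $(A(p),B(p))$ real-analytic in $p\in S$ with lower semicontinuous ranks is exactly the paper's first step, item (i) is the paper's argument verbatim, and for item (ii) you use a different but valid separating invariant. The paper separates B.$\gamma$ from C.x by the lower semicontinuity of the \emph{real dimension of the range} of the form $A(z,\bar z)$ (dimension $1$ for B.$\gamma$, $2$ for C.x), whereas you use openness of non-parallelism of the factors in a rank-one factorization $A=vw^*$ under the action $A\mapsto\lambda T^*AT$. Both work; yours additionally needs the (available) remark that the classification list forces $\rank A(p)\le 1$ at nearby CR singular points, so the real-analytic factorization exists along $S$. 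However, two steps genuinely fail. First, your second example in (iii) is wrong: for $w=\bar z_1^2+z_1z_2\bar z_1$ the CR singular set is $2\bar z_1+z_1z_2=0$, and taking moduli gives $2\abs{z_1}=\abs{z_1}\abs{z_2}$, so near the origin $S=\{z_1=0\}$ and your claimed ``C.1-branch $\{z_1\neq 0\}$'' is empty (it lives only on $\abs{z_2}=2$). Along $\{z_1=0\}$ the translated quadratic part at $(0,c,0)$ is $c\abs{Z_1}^2+\bar Z_1^2$, i.e.\ type B.$1/\abs{c}$, so your example exhibits A.1 $\to$ B.$\gamma$ but never A.1 $\to$ C.1; a correct witness is $w=\bar z_1^2+\bar z_1z_2^2$ (or the paper's $w=\bar z_1^2+\bar z_1z_2z_3$), whose quadratic part at nearby CR singular points is $2c\,\bar Z_1Z_2+\bar Z_1^2$, of type C.1. (Also, in your first example the nearby type is C.1 only when $\gamma\neq 0$; for $\gamma=0$ it is C.0. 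The paper's analogous example has the same caveat, and existence is all that is claimed.)

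Second, your treatment of the clause ``C.1 in $\C^3$ has an isolated CR singular point'' does not hold up. After eliminating $z_2$, the equation $\partial_{\bar z_2}\rho=0$ need not have a leading term with an isolated zero: for the quadric $M_{C.1}$ it is identically zero, and $S$ is the two-dimensional set $z_2+2\bar z_1=0$ (see the table in \S~\ref{sec:sizeofsing}). Moreover ``generic'' cannot rescue the clause within the stated hypotheses: by the corollary in \S~\ref{sec:crsingc1} (via Theorem~\ref{thm:folextendsCxtype}), for \emph{every} Levi-flat type C.1 submanifold the set $S$ is a manifold of dimension $2n-2$, which equals $2$, never $0$, in $\C^3$. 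So this clause is in tension with the paper's own results (the paper's proof offers only ``by a simple computation'' here), and the honest move is to flag it rather than assert a generic elimination argument. The rest of your codimension analysis via the linearization $z\mapsto Az+2B\bar z$ is correct and usefully supplies details the paper omits, with the small caveat that for B.$\gamma$, $\gamma\neq\frac{1}{2}$, the map $z_1\mapsto z_1+2\gamma\bar z_1$ is only real-linear of real rank $2$, not of ``full complex rank.''
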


\begin{proof}
First we show that 
the rank of $A$ and the rank of $B$ are lower semicontinuous on $S_0$,
without imposing Levi-flatness condition.
Similarly the real dimension of the range of $A(z,\bar{z})$ is 
lower
semicontinuous on $S_0$.
Write $M$ as
\begin{equation}
w = \rho(z,\bar{z}) ,
\end{equation}
where $\rho$ vanishes to second order at 0.  If we move to a different
point of $S_0$ via an affine map $(z,w) \mapsto (Z+z_0,W+w_0)$.  Then we have
\begin{equation}
W+w_0 = \rho(Z+z_0,\bar{Z}+\bar{z}_0) .
\end{equation}
We compute the Taylor coefficients
\begin{multline}
W =
\frac{\partial \rho}{\partial z} (z_0,\bar{z}_0) \cdot Z + 
\frac{\partial \rho}{\partial \bar{z}} (z_0,\bar{z}_0) \cdot \bar{Z} + \\
+
Z^*
\left[
\frac{\partial^2 \rho}{\partial z \partial \bar{z}} (z_0,\bar{z}_0)
\right] Z +
\frac{1}{2}
Z^t
\left[
\frac{\partial^2 \rho}{\partial z \partial z} (z_0,\bar{z}_0)
\right] Z +
\frac{1}{2}
Z^*
\left[
\frac{\partial^2 \rho}{\partial \bar{z} \partial \bar{z}} (z_0,\bar{z}_0)
\right] \bar{Z} +
O(3) .
\end{multline}
The holomorphic terms can be absorbed into $W$.  If 
$\frac{\partial \rho}{\partial \bar{z}} (z_0,\bar{z}_0) \cdot \bar{Z}$
is nonzero, then
this complex defining function has a linear term in $W$ and linear term in
$\bar{Z}$ and the submanifold is CR at this point.
Therefore
the set of complex tangents of $M$ is defined by
\begin{equation}
\frac{\partial \rho}{\partial \bar{z}} =0
\end{equation}
and each complex tangent point is non-degenerate.
At a complex tangent point at the origin,
$A$ is given by
$\left[ \frac{\partial^2 \rho}{\partial z \partial \bar{z}} (z_0,\bar{z}_0) \right]$
and 
$B$ is given by $\frac{1}{2} \left[
\frac{\partial^2 \rho}{\partial \bar{z} \partial \bar{z}} (z_0,\bar{z}_0) \right]$.
In particular these matrices change continuously as we move along $S$.
We first conclude that all CR singular points of $M$ in a neighborhood of the origin
are non-degenerate.
Further holomorphic transformations act on $A$ and $B$ using
Proposition~\ref{prop:astensors}.
Therefore the ranks of $A$ and $B$
as well as
the real dimension of the
range of $A(z,\bar{z})$ 
 are lower semicontinuous on $S_0$ as claimed.  Furthermore as $M$ is
real-analytic, the points where the rank drops lie on a real-analytic
subvariety of $S_0$, or in other words a thin set.  Let $U$ be a small
enough neighbourhood of the origin so that $S = S_0 \cap U$ is connected.

Imposing the condition that $M$ is Levi-flat, we apply
Theorem~\ref{thm:quadratic}.
By a simple computation,
unless $M$ is of type B.$\frac{1}{2}$,
the set of complex tangents of $M$ has codimension at least $2$; and A.$n$
has isolated CR singular point and so does C.1 in $\C^3$. 
The item \eqref{thmitem:Akall} follows as A.$k$ are the only types
where the rank of $B$ is greater than 1, and the theorem says $M$ must be
one of these types.
For \eqref{thmitem:C1dense} note that since $A$ is of rank 1 when $M$
as C.$x$ at a point, $M$ cannot be of type A.$k$ nearby.  If $M$ is of type
C.1 at a point then the range of $A$ must be of real dimension 2 in a
neighbourhood, and hence on this neighbourhood $M$ cannot be of type
B.$\gamma$.

The examples proving item \eqref{thmitem:examples} are
given below.
\end{proof}


\begin{example}
Define $M$ via
\begin{equation}
w = \abs{z_1}^2 + \gamma \bar{z}_1^2 + \bar{z}_1z_2z_3 .
\end{equation}
It is Levi-flat by Proposition~\ref{prop:imagelf}.
At the origin $M$ is a type B.$\gamma$, but at a point
where $z_1 = z_2 = 0$ but $z_3 \not= 0$, the submanifold is
CR singular and it is of type C.1.
\end{example}

\begin{example}
Similarly if we define $M$ via
\begin{equation}
w = \bar{z}_1^2 + \bar{z}_1z_2 z_3 ,
\end{equation}
we obtain a CR singular Levi-flat $M$ that is A.1 at the origin, but C.1 at nearby CR singular
points.
\end{example}

\begin{example}
If we define $M$ via
\begin{equation}
w = \gamma \bar{z}_1^2 +  \abs{z_1}^2 z_2 ,
\end{equation}
then $M$ is a CR singular Levi-flat type A.1 submanifold
at the origin, but type B.$\gamma$
at points where $z_1 = 0$ but $z_2 \not= 0$.
\end{example}

\begin{example} \label{example:interpolation}
The Bishop invariant can vary from point to point.
Define $M$ via
\begin{equation}
w = \abs{z_1}^2 + \bar{z}_1^2 \bigl(\gamma_1 (1-z_2) + \gamma_2 z_2 \bigr) ,
\end{equation}
where $\gamma_1 , \gamma_2 \geq 0$.
It is not hard to see that $M$ is Levi-flat.  Again it is an image
of $\C^2 \times \R^2$ in a similar way as above.

At the origin, the submanifold is Bishop-like with Bishop invariant $\gamma_1$.
When $z_1=0$ and $z_2 = 1$, the Bishop invariant is $\gamma_2$.  In fact
when $z_1=0$, the Bishop invariant at that point
is
\begin{equation}
\abs{\gamma_1 (1-z_2) + \gamma_2z_2} .
\end{equation}

Proposition~\ref{prop:imagelf} says that this submanifold possesses a
real-analytic foliation extending the Levi-foliation through the singular
points.
Proposition~\ref{prop:flatfolmflds} says that if a foliation on
$M$ extends to a (nonsingular) holomorphic foliation, then the submanifold would
be a simple product of a Bishop submanifold and $\C$.  Therefore,
if $\gamma_1 \not= \gamma_2$ then the Levi-foliation on $M$
cannot extend to a holomorphic foliation of a neighbourhood of $M$.
\end{example}


\def\MR#1{\relax\ifhmode\unskip\spacefactor3000 \space\fi%
  \href{http://www.ams.org/mathscinet-getitem?mr=#1}{MR#1}}

\begin{bibdiv}
\begin{biblist}

\bib{Bishop65}{article}{
   author={Bishop, Errett},
   title={Differentiable manifolds in complex Euclidean space},
   journal={Duke Math.\ J.},
   volume={32},
   date={1965},
   pages={1--21},
   issn={0012-7094},
   review={\MR{0200476}},
}

\bib{BER:book}{book}{
      author={Baouendi, M.~Salah},
      author={Ebenfelt, Peter},
      author={Rothschild, Linda~Preiss},
       title={Real submanifolds in complex space and their mappings},
      series={Princeton Mathematical Series},
   publisher={Princeton University Press},
     address={Princeton, NJ},
        date={1999},
      volume={47},
        ISBN={0-691-00498-6},
      review={\MR{1668103}},
}

\bib{Bedford:flat}{article}{
   author={Bedford, Eric},
   title={Holomorphic continuation of smooth functions over Levi-flat
   hypersurfaces},
   journal={Trans.\ Amer.\ Math.\ Soc.},
   volume={232},
   date={1977},
   pages={323--341},
   issn={0002-9947},
   review={\MR{0481100}},
}

\bib{Brunella:lf}{article}{
   author={Brunella, Marco},
   title={Singular Levi-flat hypersurfaces and codimension one foliations},
   journal={Ann.\ Sc.\ Norm.\ Super.\ Pisa Cl.\ Sci.\ (5)},
   volume={6},
   date={2007},
   number={4},
   pages={661--672},
   issn={0391-173X},
   review={\MR{2394414}},
}

\bib{Burcea}{article}{
  author={Burcea, Valentin},
  title={A normal form for a real 2-codimensional submanifold in
         $\mathbb{C}^{N+1}$ near a CR singularity},
  journal={Adv.\ Math.},
  volume={243},
  year={2013},
  pages={262--295},
  note={\href{http://arxiv.org/abs/1110.1118}{arXiv:1110.1118}},
  review={\MR{3062747}},
}

\bib{BG:lf}{article}{
   author={Burns, Daniel},
   author={Gong, Xianghong},
   title={Singular Levi-flat real analytic hypersurfaces},
   journal={Amer.\ J.\ Math.},
   volume={121},
   date={1999},
   number={1},
   pages={23--53},
   issn={0002-9327},
   review={\MR{1704996}},
}

\bib{CerveauLinsNeto}{article}{
   author={Cerveau, D.},
   author={Lins Neto, A.},
   title={Local Levi-flat hypersurfaces invariants by a codimension one
   holomorphic foliation},
   journal={Amer.\ J.\ Math.},
   volume={133},
   date={2011},
   number={3},
   pages={677--716},
   issn={0002-9327},
   review={\MR{2808329}},
}

\bib{Coffman:crosscap}{article}{
   author={Coffman, Adam},
   title={Analytic stability of the CR cross-cap},
   journal={Pacific J.\ Math.},
   volume={226},
   date={2006},
   number={2},
   pages={221--258},
   issn={0030-8730},
   review={\MR{2247863}},
   doi={10.2140/pjm.2006.226.221},
}

\bib{Coffman:fourfolds}{article}{
   author={Coffman, Adam},
   title={CR singularities of real fourfolds in ${\mathbb{C}}^3$},
   journal={Illinois J.\ Math.},
   volume={53},
   date={2009},
   number={3},
   pages={939--981 (2010)},
   issn={0019-2082},
   review={\MR{2727363}},
}

\bib{Coffman:unfolding}{article}{
   author={Coffman, Adam},
   title={Unfolding CR singularities},
   journal={Mem.\ Amer.\ Math.\ Soc.},
   volume={205},
   date={2010},
   number={962},
   pages={viii+90},
   issn={0065-9266},
   isbn={978-0-8218-4657-5},
   review={\MR{2650710}},
   doi={10.1090/S0065-9266-09-00575-4},
}


\bib{DF}{article}{
   author={Diederich, Klas},
   author={Fornaess, John E.},
   title={Pseudoconvex domains with real-analytic boundary},
   journal={Ann.\ Math.\ (2)},
   volume={107},
   date={1978},
   number={2},
   pages={371--384},
   review={\MR{0477153}},
} 

\bib{DTZ}{article}{
   author={Dolbeault, Pierre},
   author={Tomassini, Giuseppe},
   author={Zaitsev, Dmitri},
   title={On boundaries of Levi-flat hypersurfaces in ${\mathbb C}^n$},
   language={English, with English and French summaries},
   journal={C.\ R.\ Math.\ Acad.\ Sci.\ Paris},
   volume={341},
   date={2005},
   number={6},
   pages={343--348},
   issn={1631-073X},
   review={\MR{2169149}},
   doi={10.1016/j.crma.2005.07.012},
}

\bib{DTZ2}{article}{
   author={Dolbeault, Pierre},
   author={Tomassini, Giuseppe},
   author={Zaitsev, Dmitri},
   title={Boundary problem for Levi flat graphs},
   journal={Indiana Univ.\ Math.\ J.},
   volume={60},
   date={2011},
   number={1},
   pages={161--170},
   issn={0022-2518},
   review={\MR{2952414}},
   doi={10.1512/iumj.2011.60.4241},
}

\bib{Ebeling}{book}{
   author={Ebeling, Wolfgang},
   title={Functions of several complex variables and their singularities},
   series={Graduate Studies in Mathematics},
   volume={83},
   note={Translated from the 2001 German original by Philip G. Spain},
   publisher={American Mathematical Society, Providence, RI},
   date={2007},
   isbn={0-8218-3319-7},
   review={\MR{2319634}},
}

\bib{FernandezPerez:gensing}{article}{
   author={Fern{\'a}ndez-P{\'e}rez, Arturo},
   title={On Levi-Flat Hypersurfaces with Generic Real Singular Set},
   journal={J.\ Geom.\ Anal.},
   volume={23},
   date={2013},
   pages={2020--2033},
   number={4},
   review={\MR{3107688}},
   doi={10.1007/s12220-012-9317-1},
}

\bib{Garrity:BU}{article}{
   author={Garrity, Thomas},
   title={Global structures on CR manifolds via Nash blow-ups},
   note={Dedicated to William Fulton on the occasion of his 60th birthday},
   journal={Michigan Math.\ J.},
   volume={48},
   date={2000},
   pages={281--294},
   issn={0026-2285},
   review={\MR{1786491}},
   doi={10.1307/mmj/1030132719},
}

\bib{Gong94:duke}{article}{
   author={Gong, Xianghong},
   title={Normal forms of real surfaces under unimodular transformations
   near elliptic complex tangents},
   journal={Duke Math. J.},
   volume={74},
   date={1994},
   number={1},
   pages={145--157},
   issn={0012-7094},
   review={\MR{1271467}},
   doi={10.1215/S0012-7094-94-07407-3},
}

\bib{Gong94:helv}{article}{
   author={Gong, Xianghong},
   title={On the convergence of normalizations of real analytic surfaces
   near hyperbolic complex tangents},
   journal={Comment.\ Math.\ Helv.},
   volume={69},
   date={1994},
   number={4},
   pages={549--574},
   issn={0010-2571},
   review={\MR{1303227}},
   doi={10.1007/BF02564504},
}

\bib{Gong04}{article}{
   author={Gong, Xianghong},
   title={Existence of real analytic surfaces with hyperbolic complex
   tangent that are formally but not holomorphically equivalent to quadrics},
   journal={Indiana Univ.\ Math.\ J.},
   volume={53},
   date={2004},
   number={1},
   pages={83--95},
   issn={0022-2518},
   review={\MR{2048184}},
   doi={10.1512/iumj.2004.53.2386},
}


\bib{Huang:jams}{article}{
   author={Huang, Xiaojun},
   title={On an $n$-manifold in ${\bf C}^n$ near an elliptic complex tangent},
   journal={J.\ Amer.\ Math.\ Soc.},
   volume={11},
   date={1998},
   number={3},
   pages={669--692},
   issn={0894-0347},
   review={\MR{1603854}},
   doi={10.1090/S0894-0347-98-00265-3},
}

\bib{HuangKrantz95}{article}{
   author={Huang, Xiaojun},
   author={Krantz, Steven G.},
   title={On a problem of Moser},
   journal={Duke Math.\ J.},
   volume={78},
   date={1995},
   number={1},
   pages={213--228},
   issn={0012-7094},
   review={\MR{1328757}},
   doi={10.1215/S0012-7094-95-07809-0},
}
\bib{HuangYin09}{article}{
   author={Huang, Xiaojun},
   author={Yin, Wanke},
   title={A Bishop surface with a vanishing Bishop invariant},
   journal={Invent.\ Math.},
   volume={176},
   date={2009},
   number={3},
   pages={461--520},
   issn={0020-9910},
   review={\MR{2501295}},
   doi={10.1007/s00222-008-0167-1},
}
\bib{HuangYin09:codim2}{article}{
   author={Huang, Xiaojun},
   author={Yin, Wanke},
   title={A codimension two CR singular submanifold that is formally
   equivalent to a symmetric quadric},
   journal={Int.\ Math.\ Res.\ Not.\ IMRN},
   date={2009},
   number={15},
   pages={2789--2828},
   issn={1073-7928},
   review={\MR{2525841}},
   doi={10.1093/imrn/rnp033},
}

\bib{HuangYin:flattening}{unpublished}{
   author={Huang, Xiaojun},
   author={Yin, Wanke},
  title={Flattening of CR singular points and analyticity of local hull of holomorphy},
  note={preprint \href{http://arxiv.org/abs/1210.5146}{arXiv:1210.5146}}
}

\bib{KenigWebster:82}{article}{
   author={Kenig, Carlos E.},
   author={Webster, Sidney M.},
   title={The local hull of holomorphy of a surface in the space of two
   complex variables},
   journal={Invent.\ Math.},
   volume={67},
   date={1982},
   number={1},
   pages={1--21},
   issn={0020-9910},
   review={\MR{664323}},
   doi={10.1007/BF01393370},
}

\bib{KenigWebster:84}{article}{
   author={Kenig, Carlos E.},
   author={Webster, Sidney M.},
   title={On the hull of holomorphy of an $n$-manifold in ${\bf C}^n$},
   journal={Ann.\ Scuola Norm.\ Sup.\ Pisa Cl.\ Sci.\ (4)},
   volume={11},
   date={1984},
   number={2},
   pages={261--280},
   issn={0391-173X},
   review={\MR{764946}},
}

\bib{kohn:subell}{article}{
   author={Kohn, J.\ J.},
   title={Subellipticity of the $\bar \partial$-Neumann problem on
   pseudo-convex domains: sufficient conditions},
   journal={Acta Math.},
   volume={142},
   date={1979},
   number={1-2},
   pages={79--122},
   issn={0001-5962},
   review={\MR{512213}},
}


\bib{Lebl:lfsing}{article}{
   author={Lebl, Ji{\v{r}}{\'{\i}}},
   title={Singular set of a Levi-flat hypersurface is Levi-flat},
   journal={Math.\ Ann.},
   volume={355},
   date={2013},
   number={3},
   pages={1177--1199},
   issn={0025-5831},
   review={\MR{3020158}},
   doi={10.1007/s00208-012-0821-1},
   note={\href{http://arxiv.org/abs/1012.5993}{arXiv:1012.5993}}
}

\bib{LMSSZ}{article}{
   author={Lebl, Ji{\v{r}}{\'{\i}}},
   author={Minor, Andr{\'e}},
   author={Shroff, Ravi},
   author={Son, Duong},
   author={Zhang, Yuan},
   title={CR singular images of generic submanifolds under holomorphic maps},
   journal={Ark.\ Mat.},
   volume={52},
   date={2014},
   number={2},
   pages={301--327},
   issn={0004-2080},
   review={\MR{3255142}},
   doi={10.1007/s11512-013-0193-0},
   note={\href{http://arxiv.org/abs/1205.5309}{arXiv:1205.5309}}
}

\bib{Moser85}{article}{
   author={Moser, J{\"u}rgen K.},
   title={Analytic surfaces in ${\bf C}^2$ and their local hull of
   holomorphy},
   journal={Ann.\ Acad.\ Sci.\ Fenn.\ Ser.\ A I Math.},
   volume={10},
   date={1985},
   pages={397--410},
   issn={0066-1953},
   review={\MR{802502}},
}

\bib{MoserWebster83}{article}{
   author={Moser, J{\"u}rgen K.},
   author={Webster, Sidney M.},
   title={Normal forms for real surfaces in ${\bf C}^{2}$ near complex
   tangents and hyperbolic surface transformations},
   journal={Acta Math.},
   volume={150},
   date={1983},
   number={3--4},
   pages={255--296},
   issn={0001-5962},
   review={\MR{709143}},
   doi={10.1007/BF02392973},
}

\bib{Whitney:book}{book}{
   author={Whitney, Hassler},
   title={Complex analytic varieties},
   publisher={Addison-Wesley Publishing Co., Reading, Mass.-London-Don
   Mills, Ont.},
   date={1972},
   pages={xii+399},
   review={\MR{0387634}},
}

\end{biblist}
\end{bibdiv}

\end{document}